\newtheorem{theorem}{Theorem}
\newtheorem{lemma}[theorem]{Lemma}
\newtheorem{corollary}[theorem]{Corollary}
\newtheorem{proposition}[theorem]{Proposition}
\newtheorem{lettertheorem}{Theorem}
\newtheorem{letterlemma}[lettertheorem]{Lemma}
\theoremstyle{definition}
\theoremstyle{remark}
\numberwithin{equation}{section}
\newcommand{\set}[1]{\left\{#1\right\}}
\newcommand{\abs}[1]{\lvert#1\rvert}
\newcommand{\nm}[1]{\lVert#1\rVert}
\newcommand{\B}{\mathcal{B}}
\newcommand{\D}{\mathbb{D}}
\newcommand{\DD}{\widehat{\mathcal{D}}}
\newcommand{\Dd}{\widecheck{\mathcal{D}}}
\newcommand{\DDD}{\mathcal{D}}
\newcommand{\N}{\mathbb{N}}
\renewcommand{\phi}{\varphi}
\def\a{\alpha}       \def\b{\beta}        \def\g{\gamma}
     \def\om{\omega}      
                  \def\z{\zeta}
\def\omg{{\widehat{\omega}}}
\def\nug{{\widehat{\nu}}}
\def\mug{{\widehat{\mu}}}
\renewcommand{\H}{\mathcal{H}}
\newenvironment{Prf}{\noindent{\emph{Proof of}}}
{\hfill$\Box$ }
\begin{document}

\title[ Bergman projection induced by radial weight acting on growth spaces]{ Bergman projection induced by radial  weight acting on growth spaces}
	
\keywords{Bergman projection; boundedness; weighted sup-norm; radial weight; doubling weight; exponential weight; weighted Hardy space; Szeg\"o projection }

\author{\'Alvaro Miguel Moreno}
\address{Departamento de Analisis Matem\'atico, Universidad de M\'alaga, Campus de Teatinos, 
29071 Malaga, Spain}
\email{alvarommorenolopez@uma.es}
\author{Jos\'e \'Angel Pel\'aez}
\address{Departamento de Analisis Matem\'atico, Universidad de M\'alaga, Campus de Teatinos, 
29071 Malaga, Spain}
\email{japelaez@uma.es}
\author{Jari Taskinen}
\address{University of Helsinki, Deparment of Mathematics and Statistics, P.O.Box 68, 00014 Helsinki, Finland}
\email{jari.taskinen@helsinki.fi}

\thanks{The first two authors are supported in part by Ministerio de Ciencia e Innovaci\'on, Spain, project PID2022-136619NB-I00 and La Junta de Andaluc{\'i}a,
project FQM210. The third author was supported in part by the HORIZON-MSCA-2022-PF-01 project
101109510 LARGE BERGMAN  as well as by the Magnus Ehrnrooth Foundation and the 
V\"ais\"al\"a Foundation of the Finnish Academy of Sciences and Letters.}

\begin{abstract}
Let $\omega$ be a radial weight on the unit disc of the complex plane $\mathbb{D}$ and denote  $\omega_x =\int_0^1 s^x \omega(s)\,ds$, $x\ge 0$, for the moments of $\omega$
and $\widehat{\omega}(r)=\int_r^1 \omega(s)\,ds$ for the  tail integrals. A radial weight $\om$ belongs to the class $\widehat{\mathcal{D}}$ if  satisfies the upper doubling condition 
$$\sup_{0<r<1}\frac{\widehat{\omega}(r)}{\widehat{\omega}\left(\frac{1+r}{2}\right)}<\infty.$$
 If  $\nu$ or $\omega$ belongs to $\widehat{\mathcal{D}}$, it is described the boundedness of the Bergman projection $P_\omega$ induced by $\omega$ on the growth space $L^\infty_{\widehat{\nu}} 
=\{ f: \nm{f}_{\infty,v}=\text{\rm ess\,sup}_{z\in\D} |f(z)|\widehat{\nu}(z)<\infty\}$ in terms of neat conditions on the moments and/or the tail integrals of $\om$ and $\nu$. Moreover, it is solved the analogous problem for $P_\omega$ from 
$L^\infty_{\widehat{\nu}}$ to the Bloch type space 
$B^\infty_{\widehat{\nu}} = \set{f\, \text{analytic in $\mathbb{D}$}: \nm{f}_{B^\infty_{\widehat{\nu}}} = \sup_{z\in \D}(1-\abs{z})\widehat{\nu}(z)\abs{f'(z)}<\infty}.$ We also study similar questions 
for  exponentially decreasing radial weights. 
\end{abstract}

\maketitle
\section{Introduction and main results} \label{sec1}

The question of when the Bergman projection $P_\omega$ induced by a radial weight $\omega$ on the unit 
disc $\D$ of the complex plane, is a bounded operator in given function spaces %%from one space into another 
is fundamental 
in the theory of spaces of analytic functions on $\D$. This is not only due to the mathematical difficulties the question raises, but also to its numerous applications in operator theory.  Indeed, bounded analytic projections can be used to establish duality relations and to obtain useful equivalent norms in spaces of analytic functions
\cite{HLS,PelRatproj,PR19,Zhu}. It has been recently described 
 the radial weights $\omega$ such that $P_\omega$ is bounded from $L^\infty$ to the classical  Bloch space $\mathcal{B}$ \cite{PR19} .  In this paper we are also interested in the natural limit 
  case $p = \infty$, and our main results provide complete characterizations of the radial
weights $\omega$ (respectively, $v$) such that  $P_\omega$ is bounded on the growth space  induced by $v$, 
when $v$ (resp. $\omega$) satisfies an upper doubling condition.
In addition, we will consider bounded projections onto weighted Bloch spaces in the setting of doubling
weights, and we also obtain new results on the boundedness of $P_\om$, when $\omega$ and $v$ belong to certain classes
of exponentially decreasing weights.  In order to present the precise statements of our results some definitions are needed.

Let $\H(\D)$ denote the space of analytic functions in $\D$.
 For a nonnegative function $\om\in L^1([0,1))$, the extension to $\D$, defined by 
$\om(z)=\om(|z|)$ for all $z\in\D$, is called a radial weight.
For $0<p<\infty$ and such an $\omega$, the weighted Lebesgue space $L^p_\om$ consists of complex-valued measurable functions $f$ on $\D$ such that
	$$
	\|f\|_{L^p_\omega}^p=\int_\D|f(z)|^p\omega(z)\,dA(z)<\infty,
	$$
where $dA(z)=\frac{dx\,dy}{\pi}$ is the normalized Lebesgue area measure on $\D$. The corresponding weighted Bergman space is $A^p_\om=L^p_\omega\cap\H(\D)$. Throughout this paper we assume $\widehat{\om}(z)=\int_{|z|}^1\om(s)\,ds>0$ for all $z\in\D$, for otherwise $A^p_\om=\H(\D)$. We also consider the Lebesgue space $L^\infty$ of complex-valued measurable functions  $f$ on $\D$ such that 
	$
	\nm{f}_{\infty} = \text{\rm{ess\,sup}}_{z\in \D} \abs{f(z)} <\infty
	$
and
the Hardy space is $H^\infty = L^\infty\cap \H(\D)$.

For a radial weight $\om$, the orthogonal Bergman projection $P_\om$ from $L^2_\om$ to $A^2_\om$ is
		\begin{equation*}
    P_\om(f)(z)=\int_{\D}f(\z) \overline{B^\om_{z}(\z)}\,\om(\z)dA(\z),
    \end{equation*}
where $B^\om_{z}$ are the reproducing kernels of $A^2_\om$. As usual,~$A^p_\alpha$ stands for the classical weighted
Bergman space induced by the standard radial weight $\omega(z)=(\alpha+1)(1-|z|^2)^\alpha$, $B^\alpha_z$ are the kernels of $A^2_\alpha$, and $P_\alpha$ denotes the corresponding Bergman projection.

One of the main obstacles  throughout this work is the lack of explicit expressions for
the Bergman reproducing kernel $B^\om_z$. For a radial weight $\om$, the kernel has the representation $B^\om_z(\z)=\sum \overline{e_n(z)}e_n(\z)$ for each orthonormal basis $\{e_n\}$ of $A^2_\om$, and therefore we are basically forced to work with the formula $B^\om_z(\z)=\sum_{n=0}^\infty\frac{\left(\overline{z}\z\right)^n}{2\om_{2n+1}}$ induced by the normalized monomials. Here $\om_{2n+1}$ are the odd moments of $\om$, and in general from now on we write $\om_x=\int_0^1r^x\om(r)\,dr$ for all $x\ge0$. Therefore the influence of the weight to the kernel is transmitted by its moments through this infinite sum, and nothing more than that can be said in general. This is in stark contrast with the neat expression $(1-\overline{z}\z)^{-(2+\alpha)}$ of the standard Bergman kernel $B^\alpha_z$ which is easy to work with.  
It is well known that for $1< p <\infty$ and $\alpha >-1$, the Bergman projection $P_\alpha$ acts as a bounded operator from $L^p_\alpha$ to $A^p_\alpha$  \cite[Section 4]{Zhu}. However, $P_\alpha$ is never bounded from $L^\infty$ to $H^\infty$. In fact, this is a  general phenomenon (known e.g. in the isomorphic
theory of Banach spaces) rather than a particular case.

	\begin{lettertheorem}\label{th: proyeccion no acotada}
There does not exist any bounded projection from  $L^\infty$ to $H^\infty$. In particular, 
		 $P_\om$ is not bounded from $L^\infty$ to $H^\infty$ for any radial weight.
	\end{lettertheorem}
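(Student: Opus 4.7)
The plan is to establish the first, stronger assertion---that $H^\infty$ is not a complemented subspace of $L^\infty(\D)$---from which the particular statement for $P_\om$ is immediate: if $\om$ is any radial weight, then $P_\om$ fixes every element of $H^\infty\subset A^2_\om$, so a bounded extension $P_\om:L^\infty\to H^\infty$ would automatically be a bounded projection. The strategy is a Newman-style rotational averaging: any hypothetical bounded projection, once symmetrized under the circle action and composed with the Poisson extension, would realize the Riesz projection as a bounded operator on $L^\infty(\T)$, contradicting its classical unboundedness.

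Suppose, for contradiction, that $P:L^\infty(\D)\to H^\infty$ is a bounded projection. The rotations $R_\alpha f(z):=f(e^{-i\alpha}z)$ act isometrically on both $L^\infty(\D)$ and $H^\infty$, so I would form the symmetrized operator
\[
\widetilde{P} f(z) := \int_0^{2\pi} (P R_\alpha f)(e^{i\alpha}z)\,\frac{d\alpha}{2\pi},
\]
interpreted pointwise. Using Cauchy's integral formula to keep analyticity under the integral and the estimate $\|\widetilde{P} f\|_\infty\le\|P\|\,\|f\|_\infty$, $\widetilde{P}$ is again a bounded projection onto $H^\infty$, and it now commutes with every rotation.

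Commutation forces $\widetilde{P}$ to respect the isotypic decomposition of $L^\infty(\D)$ under the circle action. For $n\in\Z$ and $g\in L^\infty([0,1))$, the function $g(r)e^{in\phi}$ is isotypic of weight $n$; its image under $\widetilde{P}$ must lie in the weight-$n$ isotypic subspace of $H^\infty$, whose elements are constant multiples of $z^n$ for $n\ge 0$ and vanish for $n<0$. Therefore
\[
\widetilde{P}\bigl(g(r)e^{in\phi}\bigr)=\lambda_n(g)\,z^n\quad (n\ge 0),\qquad \widetilde{P}\bigl(g(r)e^{in\phi}\bigr)=0\quad (n<0),
\]
for some bounded linear functionals $\lambda_n$ on $L^\infty([0,1))$ with $\|\lambda_n\|\le\|P\|$. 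Since $z^n=r^n e^{in\phi}\in H^\infty$ is fixed by $\widetilde{P}$, we obtain the crucial normalization $\lambda_n(r^n)=1$ for every $n\ge 0$.

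Finally, let $\mathcal{P}[\cdot]:L^\infty(\T)\to L^\infty(\D)$ denote the Poisson extension (a contraction) and $\partial:H^\infty\to H^\infty(\T)$ the boundary-value isomorphism. The composition $\partial\circ\widetilde{P}\circ\mathcal{P}[\cdot]:L^\infty(\T)\to H^\infty(\T)$ is bounded by $\|P\|$. Since $\mathcal{P}[e^{in\theta}](re^{i\phi})=r^{|n|}e^{in\phi}$, the normalization $\lambda_n(r^n)=1$ yields $\partial\widetilde{P}\mathcal{P}[e^{in\theta}]=e^{in\theta}$ for $n\ge 0$ and $0$ for $n<0$; hence this composition agrees with the Riesz projection on trigonometric polynomials. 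However, the Riesz projection is classically unbounded on $L^\infty(\T)$: for $f_N(\theta):=\sum_{n=1}^N \sin(n\theta)/n$ one has $\sup_N\|f_N\|_{L^\infty(\T)}<\infty$, whereas its nonnegative-frequency part $\frac{1}{2i}\sum_{n=1}^N e^{in\theta}/n$ has $L^\infty(\T)$-norm at least $\frac{1}{2}\sum_{n=1}^N 1/n\to\infty$ (by evaluation at $\theta=0$). This is the desired contradiction. The main delicate point is the rigorous setup of the averaging together with the extraction of the normalization $\lambda_n(r^n)=1$; once these are in hand, the Poisson-lift reduction to the Riesz projection and its classical unboundedness close the argument.
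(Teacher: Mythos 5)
Your argument is correct and is essentially the proof the paper itself indicates (by citation to Rudin, Theorem 5.18 and Example 5.19, and Zhu, Theorem 9.7): rotational symmetrization of a hypothetical projection followed by the Poisson lift reduces the claim to the classical unboundedness of the Szeg\"o/Riesz projection on $L^\infty(\T)$; the paper additionally records a second, Banach-space-theoretic route via $L^\infty\cong\ell^\infty$, the primeness of $\ell^\infty$, and Lusky's theorem that $H^\infty\not\cong\ell^\infty$. One remark on the "delicate point" you flag: since the rotation action on $L^\infty(\D)$ is not strongly continuous, the average $\widetilde{P}f$ is not obviously well defined for arbitrary $f\in L^\infty(\D)$, but this is harmless here because the contradiction only requires applying $\widetilde{P}$ to finite isotypic sums $\sum_{|n|\le N} g_n(r)e^{in\phi}$ (for instance the harmonic extensions of $\sum_{n=1}^N \sin(n\theta)/n$), on which $\alpha\mapsto P R_\alpha f$ is norm continuous and the averaging, the identities $\widetilde{P}(g(r)e^{in\phi})=\lambda_n(g)z^n$ and $\lambda_n(r^n)=1$, and the final estimate all go through verbatim.
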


In view of the previous result it is natural to look for a substitute of $H^\infty$ in the target space when the Bergman projection $P_\om$ acts on $L^\infty$.  
As for this question, it is known that the standard Bergman projection $P_\alpha$ is bounded and onto from $L^\infty$   to the Bloch space~$\mathcal{B}$, which consists of $f\in\H(\mathbb D)$ such that $\|f\|_{\mathcal{B}}=\sup_{z\in\D}|f'(z)|(1-|z|^2)+|f(0)|<\infty$ \cite[Theorem 5.2]{Zhu}. Furthermore,  it has been recently proved that the Bergman projection $P_\om$ induced by a radial weight $\om$ acts as a bounded operator from the space $L^\infty$ to 
$\mathcal{B}$ if and only if $\om\in\DD$, and $P_\om: L^\infty\to\mathcal{B}$ is bounded and onto if and only if $\om\in\DDD
=\DD\cap\Dd$ \cite[Theorems 1-2-3]{PR19}. 
Let us recall the reader that a radial weight $\om$ belongs to $\DD$,  and it is called upper doubling, if there exists $C=C(\om)>0$ such that 
$$
\omg(r)\le C\omg\left(\frac{1+r}{2}\right),\quad r\to1^-,
$$
and $\om\in\Dd$ if there exists $K=K(\om)>1$ and $C=C(\om)>1$ such that 
$$
\omg(r)\ge C\omg\left(1-\frac{1-r}{K}\right),\quad r\to1^-.
$$ 
We  say that $\omega$ is a doubling weight if $\om\in\DDD$.   Standard weights belong to $\DD$ but exponentially decreasing weights do not belong to $\DD$. Moreover, 
the containment in $\DD$ or $\Dd$ does not require differentiability, continuity or strict positivity. In fact, weights in these classes may vanish on a relatively large part of each outer annulus $\{z:r\le|z|<1\}$ of $\D$. For basic properties of the aforementioned classes, concrete nontrivial examples and more, see \cite{PelSum14,PelRat,PR19} and the relevant references therein.

A natural next step within this theory and the main objective of the paper is to study   the properties of a radial weight $\om$ which are determinative so that $P_\om$ is bounded on the  growth space
$L^\infty_v =\{ f\, \text{measurable}: \nm{f}_{\infty,v}=\text{\rm ess\,sup}_{z\in\D} |f(z)|v(z)<\infty\}$, where $v$ is a radial, continuous and decreasing weight  with $\lim_{r\to 1^-} v(r)=0$.
We denote $H^\infty_v = L^\infty_v \bigcap \H(\D)$.
Sections \ref{sec2}--\ref{sec4} deal with 
cases of doubling weights, and   in the previous definitions of spaces, 
$v$ will be equal to  $\nug$ for a radial weight $\nu$.  In Section \ref{sec4.1} we will study the boundedness of $P_\om$ from $L^\infty_{\nug}$ to $H^\infty_{\nug}$ under some hypothesis on $\nu$. In particular, assuming  $\nu\in\DD$
we will prove the following characterization of the pairs of radial weights 
$(\om,\nu)$ such that this fact happens.

	\begin{theorem}\label{th:caracterizacion nu Dgorro}
		Let $\om$ be a radial weight and $\nu \in \DD$. Then, the following conditions are equivalent:
		\begin{itemize}
			\item[\rm(i)] $P_\om : L^\infty_{\nug} \to H^\infty_{\nug}$ is bounded;
			\item[\rm(ii)]
 $\nu\in \Dd$ and 
			\begin{equation}\label{eq: desig caracterizacion momentos}
				\sup_{x\ge 0} \frac{\nu_x}{\om_{2x}}\left ( \frac{\om}{\nug}\right )_x <\infty;
			\end{equation}
			\item[\rm(iii)]  $\om \in \DD$, $\nu\in \Dd$ and
			\begin{equation}\label{eq: desig caracterizacion gorros}
			\sup_{ 0\le r < 1}  \frac{\nug(r)}{\omg(r)}	\int_r^1 \frac{\om(s)}{\nug(s)}\,ds  <\infty;
			\end{equation}
			\item[\rm(iv)] $\frac{\om}{\nug}\in \DDD$ and $\nu \in \Dd$.
		\end{itemize}
\end{theorem}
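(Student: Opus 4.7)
The plan is to close the cycle (iv) $\Rightarrow$ (iii) $\Rightarrow$ (i) $\Rightarrow$ (ii) $\Rightarrow$ (iv), with the operator $P_\om$ appearing only in the two middle implications; the remaining implications are purely weight-theoretic translations. The starting point is the integration-by-parts identity
$$\int_r^1\frac{\om(s)}{\nug(s)}\, ds=\frac{\omg(r)}{\nug(r)}+\int_r^1\frac{\omg(s)\,\nu(s)}{\nug(s)^2}\, ds,$$
valid modulo a boundary term at $s=1^-$ that vanishes under the standing hypotheses. Setting $\mu=\om/\nug$, condition (iii) then reads exactly as the comparison $\mug(r)\asymp\omg(r)/\nug(r)$; together with $\nu\in\DD\cap\Dd$ one can verify that this comparison is equivalent to $\mu\in\DDD$, and splitting $\omg(r)=\int_r^1\mu\nug\, ds$ over the dyadic pieces $[r,(1+r)/2]$ and $[(1+r)/2,1)$ also yields $\om\in\DD$, which gives (iii) $\Leftrightarrow$ (iv). The equivalence (ii) $\Leftrightarrow$ (iii) is then the routine moment-tail translation via $\lambda_x\asymp\widehat{\lambda}(1-1/x)$ applied to each of $\om$, $\nu$ and $\mu$, all of which lie in $\DD$ under (iii)--(iv).

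For the sufficiency (iii) $\Rightarrow$ (i), the triangle inequality
$$\abs{P_\om f(z)}\,\nug(z)\leq\nm{f}_{\infty,\nug}\,\nug(z)\int_\D\abs{B_z^\om(\z)}\,\mu(\z)\, dA(\z)$$
reduces the task to a uniform bound on the right-hand side. Since $\om\in\DD$ and $\mu\in\DDD$, the sharp pointwise bounds on the reproducing kernel of an upper-doubling weight produce an estimate of the form $\int_\D\abs{B_z^\om(\z)}\,\mu(\z)\, dA(\z)\lesssim\mug(z)/\omg(z)$, and (iii) absorbs this into a uniform constant. For the necessity (i) $\Rightarrow$ (ii), I would test on the monomials $f_n(\z)=\z^n/\nug(\abs{\z})$, which lie in $L^\infty_{\nug}$ with norm at most one. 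Using the kernel series $B_z^\om(\z)=\sum_k(\overline z\z)^k/(2\om_{2k+1})$ and orthogonality of monomials,
$$P_\om f_n(z)=\frac{z^n}{\om_{2n+1}}\int_0^1 r^{2n+1}\frac{\om(r)}{\nug(r)}\, dr,$$
so the boundedness of $P_\om$ forces $\sup_n\bigl(\sup_r r^n\nug(r)\bigr)(\om/\nug)_{2n+1}/\om_{2n+1}<\infty$. Since $\nu\in\DD$, the standard comparison $\sup_r r^n\nug(r)\asymp\nu_n$ converts this into a discrete version of the condition in (ii), and the continuous supremum over $x\geq 0$ is recovered by interpolating between integer indices using the doubling of the relevant weights. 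An auxiliary test, chosen to be unstable precisely when $\nug$ fails lower doubling, extracts $\nu\in\Dd$.

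The main technical obstacle is the sufficiency step: no closed form for $B_z^\om$ is available beyond the series through the moments $\om_{2n+1}$, so the integral $\int_\D\abs{B_z^\om(\z)}\,\mu(\z)\, dA(\z)$ must be controlled by sharp pointwise and off-diagonal bounds on the reproducing kernel of an upper-doubling weight, typically assembled across dyadic annuli near the boundary. A secondary delicate point is the extraction of $\nu\in\Dd$ from the mere boundedness in (i), which requires a test function sensitive to a slow decay of $\nug$.
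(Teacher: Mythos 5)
Your overall architecture (a cycle in which the operator appears only in (iii)$\Rightarrow$(i) and (i)$\Rightarrow$(ii), with the remaining implications being weight-theoretic translations) matches the paper, and your treatment of (ii)$\Leftrightarrow$(iii)$\Leftrightarrow$(iv) via integration by parts for $\mu=\om/\nug$ and the moment--tail comparison $\lambda_x\asymp\widehat{\lambda}(1-1/x)$ is essentially what the paper does. However, both steps that actually involve $P_\om$ contain genuine gaps.

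First, the sufficiency. The bound $\int_\D|B_z^\om(\z)|\mu(\z)\,dA(\z)\lesssim\mug(z)/\omg(z)$ that you invoke is false: for $\om\in\DD$ the sharp estimate (from \cite[Theorem~1]{PelRatproj}, which is what the paper uses) is $\int_\D|B_z^\om(\z)|\mu(\z)\,dA(\z)\asymp\int_0^{|z|}\widehat{\mu}(t)\,\omg(t)^{-1}(1-t)^{-1}\,dt$; already for $\mu=\om$ the left-hand side grows like $\log\frac{1}{1-|z|}$ while your right-hand side is $\asymp1$. After inserting \eqref{eq: desig caracterizacion gorros} one is left with $\nug(z)\int_0^{|z|}\nug(t)^{-1}(1-t)^{-1}\,dt$, and it is precisely the hypothesis $\nu\in\Dd$, through Lemma~\ref{caract. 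D check}(iii), that makes this quantity bounded; for $\nu\in\DD\setminus\Dd$ it is unbounded, consistently with Corollary~\ref{co: proyeccion no acotada pesos}. Your sufficiency argument never uses $\nu\in\Dd$, so it cannot be correct as it stands.

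Second, the necessity of \eqref{eq: desig caracterizacion momentos}. Testing on $f_n(\z)=\z^n/\nug(\z)$ gives, exactly as you computed, $\sup_n\nu_n(\om/\nug)_{2n+1}/\om_{2n+1}<\infty$, i.e.\ essentially $\nu_x(\om/\nug)_x\lesssim\om_x$. This is strictly weaker than \eqref{eq: desig caracterizacion momentos}, which has $\om_{2x}$ in the denominator, and the loss is fatal: for $\om(r)=e^{-1/(1-r)}$ and $\nu$ a standard weight, Laplace's method shows that $\nu_x(\om/\nug)_x/\om_x\to0$ while $\nu_x(\om/\nug)_x/\om_{2x}\to\infty$ (and indeed $P_\om$ is unbounded here since $\om\notin\DD$), so the monomial test cannot detect the failure of (i). The paper instead tests against $f_a=B_a^\om/(\nug|B_a^\om|)$, so that the full $L^1$-mass of the kernel appears, and then bounds $M_1(B_a^\om,r)$ from below termwise by a Hardy/Hausdorff--Young inequality; this preserves the index mismatch $(\om/\nug)_n$ versus $\om_{2n+1}$, with the factor $\nug(1-1/n)\asymp\nu_n$ supplied by the evaluation point $|a|=1-1/n$. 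Finally, the extraction of $\nu\in\Dd$ from (i) is not something an unspecified ``auxiliary test'' delivers: in the paper it is Proposition~\ref{prop: necesidad Hinfty nu en Dcheck}, a substantive argument that first requires $\om\in\DD$ (itself deduced from the moment condition in Proposition~\ref{prop: necesidad dgorro}) and then a lower bound for the kernel integral, again via \cite[Theorem~1]{PelRatproj}.
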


It is worth mentioning  that $\int_r^1 \frac{\om(s)}{\nug(s)}\,ds\ge \frac{\omg(r)}{\nug(r)}$ holds for all  radial weights and  that  \eqref{eq: desig caracterizacion gorros} 
means  nothing else but that the function $\frac{1}{\nug}$ satisfies a
 Bekoll\'e-Bonami type condition $B_{1,\om}$, see  \cite[Theorem~3]{PRW} for related results.
As for the proof of Theorem~\ref{th:caracterizacion nu Dgorro}, the condition \eqref{eq: desig caracterizacion momentos} is deduced from (i) using the 
the  reformulation 
\begin{equation}\label{eq:intro1}
	\sup_{a\in\D} \nug(a)\int_{\D} |B^\om_{a}(\z)|\,\frac{\om(\z)}{\nug(\z)}dA(\z)<\infty,
\end{equation}
together with a Hausdorff-Young inequality \cite[Theorem~6.1]{Duren}. Then it is proved that condition  \eqref{eq: desig caracterizacion momentos} implies that $\om\in\DD$ which joined to \cite[Theorem~1]{PelRatproj} allows to express the integral in \eqref{eq:intro1} in terms of  $\omg$ and $\nug$. Finally,  some technical calculations
imply that $\nu\in\Dd$ (see Proposition~\ref{prop: necesidad Hinfty nu en Dcheck} below) and we get (ii). The proofs of (ii)$\Leftrightarrow$(iii)$\Leftrightarrow$(iv) involve several descriptions of the classes of radial weight $\DD$ and $\Dd$. In order to prove (iii)$\Rightarrow$(i),    \cite[Theorem~1]{PelRatproj} is employed again.
	
In the case of standard weights $\omega (r) =(1-r^2)^\beta$, 
$\nu (r) = (1-r^2)^\alpha$, $\alpha, \beta > -1$, the projection $P_\omega$
is bounded on $L_{\nug}^\infty$, if and only if 
\begin{equation}
\beta > \alpha   . \label{eq: forellirudin}
\end{equation} 
This follows from the Forelli-Rudin estimates, see \cite{Zhu}, Lemma 3.10 and the proof 
of Theorem 3.12. Since both $\omega$ and $\nu$ belong to $\DDD$, we conclude that \eqref{eq: forellirudin} is an equivalent formulation of \eqref{eq: desig caracterizacion momentos} and \eqref{eq: desig caracterizacion gorros} in this simple case.

As a byproduct of Theorem~\ref{th:caracterizacion nu Dgorro} we deduce the next result which  might be expected in view of Theorem~\ref{th: proyeccion no acotada}, because roughly speaking the spaces 
$ L^\infty_{\nug}$ are "small" growth spaces for   $\nu \in \DD\setminus \Dd$.
\begin{corollary}\label{co: proyeccion no acotada pesos}
Let  $\nu \in \DD$. Then, there is a radial weight $\omega$ such that $P_\om: L^\infty_{\nug} \to H^\infty_{\nug}$ is bounded if and only if $\nu\in\Dd$.
In particular, if  $\nu \in \DD\setminus \Dd$ and a $\omega$ is a radial weight $P_\om$ is not bounded from $L^\infty_{\nug}$ to $H^\infty_{\nug}$.
\end{corollary}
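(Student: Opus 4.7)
My plan is to read this corollary off directly from Theorem~\ref{th:caracterizacion nu Dgorro}, so the argument amounts to a bookkeeping exercise plus one elementary construction.

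For the forward implication, suppose some radial weight $\om$ makes $P_\om\colon L^\infty_{\nug}\to H^\infty_{\nug}$ bounded. Since $\nu\in\DD$ by hypothesis, Theorem~\ref{th:caracterizacion nu Dgorro} applies, and the implication (i)$\Rightarrow$(ii) (equivalently, (i)$\Rightarrow$(iv)) yields at once $\nu\in\Dd$.

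For the converse, I would exhibit an explicit witness. Assuming $\nu\in\DDD=\DD\cap\Dd$, I would take $\om:=\nug$. This is an admissible radial weight: $\nug$ is continuous, bounded, and strictly positive on $[0,1)$ (in particular in $L^1([0,1))$), and $\widehat{\om}(r)=\int_r^1\nug(s)\,ds>0$ for all $r<1$, so the degenerate case $A^p_\om=\H(\D)$ does not occur. With this choice $\om/\nug\equiv 1$, and the constant function plainly lies in $\DDD$ since both the upper and the lower doubling conditions reduce to trivial inequalities. Combined with the given $\nu\in\Dd$, condition~(iv) of Theorem~\ref{th:caracterizacion nu Dgorro} is satisfied, and the implication (iv)$\Rightarrow$(i) furnishes the desired boundedness of $P_\om$.

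The ``in particular'' clause is then nothing but the contrapositive of the forward direction. I do not anticipate a genuine obstacle anywhere: the only substantive points are checking that $\om=\nug$ is a legitimate radial weight and that the constant weight belongs to $\DDD$, both of which are immediate from the definitions, so essentially all the work has already been done in Theorem~\ref{th:caracterizacion nu Dgorro}.
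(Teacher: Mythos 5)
Your proposal is correct, and both directions are, as in the paper, read off from Theorem~\ref{th:caracterizacion nu Dgorro}; the forward implication is identical. The only real difference is the witness weight in the converse: the paper takes the standard weight $\om(z)=(1-|z|)^{\gamma}$ with $\gamma>\alpha-1$, where $\alpha$ is a $\DD$-exponent of $\nu$ furnished by Lemma~\ref{caract. pesos doblantes}(ii), and then verifies condition (iii) of the theorem, i.e.\ \eqref{eq: desig caracterizacion gorros}, by a short integral estimate. You instead take $\om=\nug$, for which $\om/\nug\equiv 1\in\DDD$ trivially, and verify condition (iv). Your choice makes the verification essentially computation-free (the constant weight clearly satisfies both doubling conditions, and $\nug$ is a bounded, strictly positive, integrable radial weight, so it is admissible), at the cost of producing a witness that depends on $\nu$; the paper's route yields a concrete standard weight independent of $\nu$ except through the exponent $\gamma$. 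Both are complete proofs, and your bookkeeping of which implication of the theorem is used at each step is accurate.
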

Theorem~\ref{th:caracterizacion nu Dgorro} is also related  with the boundedness of the Szeg\"o projection. 
In Section \ref{sec3} we recall some literature references on the proof of Theorem 
\ref{th: proyeccion no acotada} and its relations to the boundedness
of the  Szeg\"o projection, since these can be used to give an alternative proof for one of our results
in Corollary \ref{co: proyeccion no acotada pesos}.

In Section \ref{sec4.2} we will study the boundedness of $P_\om$ from $L^\infty_{\nug}$ to $H^\infty_{\nug}$ assuming some hypothesis on $\om$. The following main results will
be proved using similar techniques to those employed in the proof of Theorem~\ref{th:caracterizacion nu Dgorro}.

\begin{theorem}\label{th:caracterizacion omg dgorro}
	Let  $\nu$ be a radial weight and $\om \in \DD$. Then, the following conditions are equivalent:
	\begin{itemize}
		\item[\rm(i)] $P_\om : L^\infty_{\nug} \to H^\infty_{\nug}$ is bounded;
		\item[\rm(ii)] $\nu \in \Dd$ and \eqref{eq: desig caracterizacion momentos} holds;
            \item[\rm(iii)] $\nu\in \Dd$ and \eqref{eq: desig caracterizacion gorros} holds;
		
		\item[\rm(iv)] $\frac{\om}{\nug}\in \DDD$ and $\nu \in \DDD$.
	\end{itemize}
\end{theorem}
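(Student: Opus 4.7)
The plan is to mimic the strategy of the proof of Theorem~\ref{th:caracterizacion nu Dgorro}, exchanging the roles played by the hypotheses on $\omega$ and $\nu$. The simplification compared to that proof is that $\omega\in\DD$ is now assumed from the start, so one may directly invoke \cite[Theorem~1]{PelRatproj} to estimate the relevant $L^1$-norm of the reproducing kernel $B^\omega_a$, rather than having to first deduce $\omega\in\DD$ from the boundedness hypothesis.

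The starting point is the standard testing argument with suitably chosen unimodular functions supported where $B^\omega_a\ne 0$, which shows that (i) is equivalent to
\begin{equation*}
\sup_{a\in\D}\,\nug(a)\int_\D |B^\omega_a(\zeta)|\,\frac{\omega(\zeta)}{\nug(\zeta)}\,dA(\zeta)<\infty.
\end{equation*}
Since $\omega\in\DD$, the two-sided estimate of the inner integral provided by \cite[Theorem~1]{PelRatproj} reduces the displayed supremum, up to comparable constants, to the left-hand side of \eqref{eq: desig caracterizacion gorros}. This yields the integral part of (iii) from (i), and the converse (iii)$\Rightarrow$(i) follows from the same identity. The remaining statement $\nu\in\Dd$ in the implication (i)$\Rightarrow$(iii) is obtained by repeating verbatim the technical calculations in Proposition~\ref{prop: necesidad Hinfty nu en Dcheck}, whose argument does not actually use the hypothesis $\nu\in\DD$ anywhere and therefore transplants unchanged to our setting. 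The equivalence (ii)$\Leftrightarrow$(iii) is then a routine translation of tail integrals into moments via $\omega_x\asymp\omg(1-1/x)$ for $\omega\in\DD$ together with the analogous equivalence for $\nu\in\Dd$, as in the proof of Theorem~\ref{th:caracterizacion nu Dgorro}.

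The only genuinely new ingredient is the step showing that (iii) forces $\nu\in\DD$ in (iv). Splitting $\int_r^1\omega(s)/\nug(s)\,ds$ at $(1+r)/2$ and using the monotonicity $\nug(s)\le\nug((1+r)/2)$ on the right half yields the lower bound $\omg((1+r)/2)/\nug((1+r)/2)$, so combining with \eqref{eq: desig caracterizacion gorros} and $\omega\in\DD$ one obtains
\begin{equation*}
\frac{\nug(r)}{\nug((1+r)/2)}\le C\,\frac{\omg(r)}{\omg((1+r)/2)}\le C',
\end{equation*}
that is, $\nu\in\DD$; combined with $\nu\in\Dd$ this gives $\nu\in\DDD$. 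The remaining equivalence of \eqref{eq: desig caracterizacion gorros} with $\omega/\nug\in\DDD$, under the working hypotheses $\omega\in\DD$ and $\nu\in\Dd$, is identical to the one already established in Theorem~\ref{th:caracterizacion nu Dgorro}, based on the comparison $\mug(r)\asymp\omg(r)/\nug(r)$ (immediate from \eqref{eq: desig caracterizacion gorros} and the elementary lower bound $\mug(r)\ge\omg(r)/\nug(r)$) and the standard descriptions of the classes $\DDD$.

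The hardest step, as in the first theorem, is the derivation of $\nu\in\Dd$ from (i); once this is in place via the reuse of Proposition~\ref{prop: necesidad Hinfty nu en Dcheck}, all the remaining implications follow by combining \cite[Theorem~1]{PelRatproj} with elementary manipulations of tail integrals in the classes $\DD$ and $\Dd$.
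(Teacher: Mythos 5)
Your overall architecture matches the paper's: reduce (i) to the kernel condition $\sup_a\nug(a)\int_\D|B^\om_a(\z)|\,\om(\z)/\nug(\z)\,dA(\z)<\infty$ via the testing functions, reuse Proposition~\ref{prop: necesidad Hinfty nu en Dcheck} (which, as you correctly observe, is already stated under the hypotheses ``$\nu$ radial, $\om\in\DD$'' and needs no adaptation), extract $\nu\in\DD$ from \eqref{eq: desig caracterizacion gorros} by splitting the integral at $(1+r)/2$ exactly as the paper does, and then fall back on Theorem~\ref{th:caracterizacion nu Dgorro} for the remaining equivalences. That much is sound.

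However, there is a genuine error in your central claim that, for $\om\in\DD$, \cite[Theorem~1]{PelRatproj} ``reduces the displayed supremum, up to comparable constants, to the left-hand side of \eqref{eq: desig caracterizacion gorros}.'' What that theorem gives is the comparison of the inner integral with the double integral $\int_0^{|a|}\frac{1}{\omg(t)(1-t)}\int_{t/|a|}^1\frac{\om(s)}{\nug(s)}\,ds\,dt$, and the finiteness of $\sup_a\nug(a)$ times this quantity is \emph{strictly stronger} than \eqref{eq: desig caracterizacion gorros} in general: passing from the double integral to the single tail integral in the upper-bound direction requires $\nug(a)\int_0^{|a|}\frac{dt}{\nug(t)(1-t)}\lesssim 1$, i.e.\ Lemma~\ref{caract. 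D check}(iii) for $\nu$, which is exactly the condition $\nu\in\Dd$. If the two-sided comparability held under $\om\in\DD$ alone, then \eqref{eq: desig caracterizacion gorros} by itself would imply (i); this is refuted by the paper's own Corollary~\ref{co: H-B-nu}, where $\nu\in\DD\setminus\Dd$ and $\om_\nu=\nu\nug\in\DD$ satisfy \eqref{eq: desig caracterizacion gorros} yet $P_{\om_\nu}$ is not bounded into $H^\infty_{\nug}$ (indeed \eqref{eq: desig caracterizacion gorros} alone only characterizes boundedness into $\B^\infty_{\nug}$, Theorem~\ref{th:caracterizacion B omg dgorro}). Your implication (iii)$\Rightarrow$(i) is therefore not justified ``by the same identity''; it must explicitly invoke $\nu\in\Dd$ via Lemma~\ref{caract. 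D check}(iii), as in the last step of the paper's proof of Theorem~\ref{th:caracterizacion nu Dgorro}. For the converse direction (i)$\Rightarrow$\eqref{eq: desig caracterizacion gorros}, only the lower bound of the double integral by $\frac{1}{\omg(a)}\int_{|a|}^1\frac{\om(s)}{\nug(s)}\,ds$ is needed; this is true but requires an argument (restrict to $t\le 2|a|-1$ and use Lemma~\ref{caract. pesos doblantes}(ii)) that you do not supply --- the paper instead obtains this necessity from the moment estimate of Proposition~\ref{prop: condicion necesaria} (Hausdorff--Young) combined with Lemma~\ref{caract. pesos doblantes}(iii). A minor additional imprecision: converting moments to tails in (ii)$\Leftrightarrow$(iii) uses $\nu\in\DD$ (via Lemma~\ref{caract. pesos doblantes}(iii)), not ``the analogous equivalence for $\nu\in\Dd$''; you do have $\nu\in\DD$ available from your splitting argument, but the attribution should be corrected.
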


\begin{corollary}\label{co: corolario caracterizacion omg dgorro}
Let $\om \in \DD$. Then there is a radial weight $\nu$ such that $P_\om : L^\infty_{\nug} \to H^\infty_{\nug}$ is bounded if and only if $\om\in\Dd$.
In particular, if  $\om \in \DD\setminus \Dd$ and a $\nu$ is a radial weight $P_\om$ in not bounded from $L^\infty_{\nug}$ to $H^\infty_{\nug}$.
\end{corollary}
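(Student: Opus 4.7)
The plan is to invoke Theorem~\ref{th:caracterizacion omg dgorro}, in particular the equivalence (i)$\Leftrightarrow$(iv), which under the standing hypothesis $\om\in\DD$ asserts that $P_\om:L^\infty_{\nug}\to H^\infty_{\nug}$ is bounded if and only if $\nu\in\DDD$ and $\om/\nug\in\DDD$. Hence the corollary reduces to the following statement on radial weights: given $\om\in\DD$, a radial weight $\nu$ satisfying both conditions exists if and only if $\om\in\Dd$.

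For sufficiency I would take $\nu(r)=\om(r)/(2\sqrt{\omg(r)})$, so that a direct computation gives $\nug(r)=\sqrt{\omg(r)}$ and the integrability $\int_0^1\nu(r)\,dr=\sqrt{\omg(0)}<\infty$. With this choice $\om/\nug=2\nu$, so the two conditions $\nu\in\DDD$ and $\om/\nug\in\DDD$ collapse into one. Because $\nug=\sqrt{\omg}$, each defining inequality of $\DD$ and $\Dd$ for $\om$ transfers to $\nu$ simply by taking square roots, the crucial point being that a lower-doubling constant $C>1$ becomes $\sqrt{C}>1$. Thus $\nu\in\DDD$, and Theorem~\ref{th:caracterizacion omg dgorro} (iv)$\Rightarrow$(i) yields the boundedness.

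For necessity, assume such a $\nu$ exists. Theorem~\ref{th:caracterizacion omg dgorro} provides $\nu\in\DDD$, $\rho:=\om/\nug\in\DDD$, and condition~(iii), namely $\nug(r)\widehat{\rho}(r)\lesssim\omg(r)$. Writing $\omg(r)=\int_r^1\nug(s)\rho(s)\,ds$ and integrating by parts with $u=\nug$, $dv=\rho\,ds$ produces $\omg(r)=\nug(r)\widehat{\rho}(r)-\int_r^1\nu(s)\widehat{\rho}(s)\,ds$, whence the complementary bound $\omg(r)\le\nug(r)\widehat{\rho}(r)$. Combining the two inequalities gives $\omg(r)\asymp\nug(r)\widehat{\rho}(r)$. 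Since $\nu,\rho\in\Dd$, iterating their lower-doubling inequalities I can, for any prescribed $M>1$, find a common $K>1$ with $\nug(r)\ge M\nug(1-(1-r)/K)$ and $\widehat{\rho}(r)\ge M\widehat{\rho}(1-(1-r)/K)$. Multiplying these and using $\omg\asymp\nug\widehat{\rho}$ yields $\omg(r)\ge cM^2\omg(1-(1-r)/K)$ for a fixed $c>0$; taking $M$ large produces a constant exceeding $1$, so $\om\in\Dd$.

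The main obstacle lies in the sufficiency step: selecting a $\nu$ that forces both $\nu$ and $\om/\nug$ into $\DDD$ simultaneously. The choice $\nug=\sqrt{\omg}$ is the natural one, because it makes $\om/\nug$ proportional to $\nu$ and thereby reduces two independent conditions to a single transfer through the square root. In the necessity argument the only delicate point is the amplification of the $\Dd$ constants, which is a standard iterative consequence of the definition of the class.
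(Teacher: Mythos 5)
Your proof is correct, and although it shares the paper's overall skeleton (everything is reduced to Theorem~\ref{th:caracterizacion omg dgorro}), the sufficiency half takes a genuinely different route. The paper chooses the power weight $\nu(r)=(1-r)^{\gamma-1}$ with $0<\gamma<\beta$, where $\beta$ is an exponent from the $\Dd$-characterization of $\om$, and then verifies condition (iii) of Theorem~\ref{th:caracterizacion omg dgorro} by an integration by parts based on the estimate $\omg(s)\lesssim\omg(r)\left(\frac{1-s}{1-r}\right)^{\beta}$. You instead take $\nug=\sqrt{\omg}$, which makes $\om/\nug=2\nu$ and collapses condition (iv) to the single requirement $\nu\in\DDD$; this transfers from $\om\in\DDD$ by taking square roots of the doubling inequalities, and the point you single out --- that a lower-doubling constant $C>1$ becomes $\sqrt{C}>1$ --- is exactly the one that matters. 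Your choice is arguably cleaner, avoids any computation with the kernel conditions, and is in fact the same witness the authors use later for the Bloch-space analogue in Corollary~\ref{co: H-B-om}. For the necessity half, the paper simply invokes Lemma~\ref{prop: producto en D} (namely $\nu\in\DDD$ and $\om/\nug\in\DDD$ imply $\om=(\om/\nug)\,\nug\in\DDD$), whereas you reprove that lemma inline via the two-sided bound $\omg\asymp\nug\,\widehat{\rho}$ and amplification of the $\Dd$ constants by iteration; both arguments are sound and essentially equivalent, your common-$K$ step being justified by the monotonicity of $\nug$ and $\widehat{\rho}$.
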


Bearing in mind Theorem~\ref{th: proyeccion no acotada}, \cite[Theorem~1]{PR19} and our previous results  it is also interesting to study the boundedness from 
$L^\infty_\nu$ to the Bloch type space 
$$
B^\infty_{\nu} = \set{f\in \H(\D): \nm{f}_{B^\infty_{\nu}} = \sup_{z\in \D}(1-\abs{z})\nu(z)\abs{f'(z)}<\infty},
$$
where $\nu$ is a continuous and decreasing weight  with $\lim_{r\to 1^-}\nu(r)=0$. 
In Section \ref{sec4.3} we  will prove the following results for $\nu=\nug$.

\begin{theorem}\label{th:caracterizacion B nu Dgorro}
	Let $\om$ be a radial weight and $\nu \in \DD$. Then, the following conditions are equivalent:
	\begin{itemize}
		\item[\rm(i)] $P_\om : L^\infty_{\nug} \to \B^\infty_{\nug}$ is bounded;
		\item[\rm(ii)]   \eqref{eq: desig caracterizacion momentos} holds;
		\item[\rm(iii)] $\om \in \DD$ and \eqref{eq: desig caracterizacion gorros} holds.
	\end{itemize}
\end{theorem}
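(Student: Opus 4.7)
The plan is to mimic the proof of Theorem~\ref{th:caracterizacion nu Dgorro}, with the kernel $B^\om_a(\z)$ replaced by the derivative kernel $\partial_a\overline{B^\om_a(\z)}$ and the factor $\nug(a)$ replaced by $(1-|a|)\nug(a)$, as dictated by the Bloch-type norm on the target space. Since $\set{f\in L^\infty_{\nug}:\nm{f}_{\infty,\nug}\le 1}$ agrees with $\set{|f|\le 1/\nug \text{ a.e.}}$, differentiating $P_\om f$ under the integral sign and optimizing over the phase of $f$ show that (i) is equivalent to
$$
\sup_{a\in\D}(1-|a|)\nug(a)\int_\D \bigl|\partial_a\overline{B^\om_a(\z)}\bigr|\,\frac{\om(\z)}{\nug(\z)}\,dA(\z)<\infty,
$$
the Bloch analogue of the reformulation \eqref{eq:intro1} used for Theorem~\ref{th:caracterizacion nu Dgorro}. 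Observe that the extra factor $(1-|a|)$ on the left is precisely what will allow us to dispense with the hypothesis $\nu\in\Dd$ appearing in that theorem.

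To extract (ii), I would expand
$$
\partial_a\overline{B^\om_a(\z)}=\sum_{n=1}^\infty \frac{n\,a^{n-1}\overline{\z}^{\,n}}{2\om_{2n+1}},
$$
and, for each fixed $|\z|=s$, regard the right-hand side as a Fourier series in $\arg\z$. The trivial Hausdorff--Young estimate $|\widehat{g}(k)|\le \nm{g}_{L^1(\T)}$ isolates a single Fourier coefficient, and integrating in the radial direction against $\om(s)/\nug(s)\cdot 2s\,ds$ yields
$$
\int_\D \bigl|\partial_a\overline{B^\om_a(\z)}\bigr|\,\frac{\om(\z)}{\nug(\z)}\,dA(\z)\ge \frac{n\,|a|^{n-1}}{\om_{2n+1}}\left(\frac{\om}{\nug}\right)_{n+1}
$$
for every $n\ge 1$ and $a\in\D$. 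Setting $|a|=1-1/n$ and invoking the moment-tail comparison $\nu_n\asymp \nug(1-1/n)$ valid for $\nu\in\DD$, together with the standard equivalences $\om_{2n+1}\asymp \om_{2n}$ and $(\om/\nug)_{n+1}\asymp (\om/\nug)_n$, deliver the integer version of \eqref{eq: desig caracterizacion momentos}; a routine interpolation between adjacent integer indices then produces the continuous supremum over $x\ge 0$.

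For (ii)$\Leftrightarrow$(iii) and (iii)$\Rightarrow$(i), I would argue along the lines of Theorem~\ref{th:caracterizacion nu Dgorro}. Comparing the supremum in \eqref{eq: desig caracterizacion momentos} at $x$ and $2x$, together with standard moment characterizations of $\DD$ from \cite{PelSum14,PelRat}, forces $\om\in\DD$. Once $\om\in\DD$ is in hand, a derivative version of \cite[Theorem~1]{PelRatproj} provides two-sided estimates for the kernel integral above in terms of $\omg$, $\nug$ and $\int_r^1 \om(s)/\nug(s)\,ds$, which turns the moment formulation \eqref{eq: desig caracterizacion momentos} into the tail formulation \eqref{eq: desig caracterizacion gorros}; the implication (iii)$\Rightarrow$(i) then follows by plugging the matching upper bound into the reformulation of (i). The main obstacle I foresee is this derivative-kernel estimate itself, since the factor $n$ produced by $\partial_a$ shifts the moment denominators appearing in \cite[Theorem~1]{PelRatproj} and must be matched carefully with the extra $(1-|a|)$ supplied by the Bloch norm.
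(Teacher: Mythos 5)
Your proposal is correct and follows essentially the same route as the paper: the test-function reformulation of (i) as $\sup_{a\in\D}(1-|a|)\nug(a)\int_\D|(B^\om_\z)'(a)|\frac{\om(\z)}{\nug(\z)}\,dA(\z)<\infty$ (Proposition~\ref{prop: norma Binf}), the Hausdorff--Young coefficient estimate at $|a|=1-1/n$ for (i)$\Rightarrow$(ii), the self-improvement argument of Proposition~\ref{prop: necesidad dgorro} showing that \eqref{eq: desig caracterizacion momentos} with $\nu\in\DD$ forces $\om\in\DD$, and Lemma~\ref{caract. pesos doblantes}(iii) to pass between moments and tails; you also correctly identify that the extra factor $(1-|a|)$ is what removes the hypothesis $\nu\in\Dd$, since the final integral $\int_0^{|a|}\frac{dt}{\nug(t)(1-t)^2}\lesssim\frac{1}{\nug(a)(1-|a|)}$ needs only the monotonicity of $\nug$. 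The one obstacle you flag, the derivative-kernel estimate, is resolved in the paper without any new work via the symmetry $z(B^\om_\z)'(z)=\overline{\z(B^\om_z)'(\z)}$, which reduces it to the estimates already available in \cite[Theorem~1]{PelRatproj}.
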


\begin{theorem}\label{th:caracterizacion B omg dgorro}
	Let $\nu$ be a radial weight and $\om \in \DD$. Then, the following conditions are equivalent:
	\begin{itemize}
		\item[\rm(i)] $P_\om:L^\infty_{\nug}\to \B^\infty_{\nug}$ is bounded;
		\item[\rm(ii)] \eqref{eq: desig caracterizacion momentos} holds;
		\item[\rm(iii)]\eqref{eq: desig caracterizacion gorros} holds.
	\end{itemize}
\end{theorem}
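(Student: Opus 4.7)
The plan is to follow the template of Theorem~\ref{th:caracterizacion B nu Dgorro}, proving the cycle (i)$\Rightarrow$(ii)$\Rightarrow$(iii)$\Rightarrow$(i) under the standing hypothesis $\om\in\DD$. The scheme is simpler here, since $\om\in\DD$ no longer has to be deduced from (i), and no $\Dd$-condition on $\nu$ is required (contrast with Theorem~\ref{th:caracterizacion omg dgorro}, where the growth-space target demands $\nu\in\Dd$).

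The starting reformulation is that (i) is equivalent to the kernel estimate
\begin{equation}\label{eq:planBkernel}
\sup_{z\in\D}(1-|z|)\,\nug(z)\int_\D \bigl|\partial_z\overline{B^\om_z(\zeta)}\bigr|\,\frac{\om(\zeta)}{\nug(\zeta)}\,dA(\zeta)<\infty,
\end{equation}
obtained by writing the $B^\infty_{\nug}$-norm of $P_\om f$ as $\sup_z(1-|z|)\nug(z)|(P_\om f)'(z)|$ and, for each $z$, testing against the unimodular function $\zeta\mapsto\overline{\partial_z\overline{B^\om_z(\zeta)}}\cdot|\partial_z\overline{B^\om_z(\zeta)}|^{-1}/\nug(\zeta)$, which lies in the unit ball of $L^\infty_{\nug}$.

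For (i)$\Rightarrow$(ii), I expand $\partial_z\overline{B^\om_z(\zeta)}=\sum_{n\ge 1}\tfrac{n\,z^{n-1}\bar\zeta^n}{2\om_{2n+1}}$ and, after the polar decomposition $\zeta=se^{i\theta}$, apply the Hausdorff--Young inequality \cite[Theorem~6.1]{Duren} in the angular variable to bound the $\theta$-integral below by each Fourier coefficient. Radial integration against $\om(s)/\nug(s)$ then produces the lower estimate $\tfrac{n\,|z|^{n-1}}{\om_{2n+1}}(\om/\nug)_{n+1}$; substituting into \eqref{eq:planBkernel} and reconstructing $\nu_n$ via the integration-by-parts identity $\nu_n=n\int_0^1 s^{n-1}\nug(s)\,ds$ yields \eqref{eq: desig caracterizacion momentos} along integers, with the extension to $x\ge 0$ following by standard interpolation of moments. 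For (iii)$\Rightarrow$(i), since $\om\in\DD$ is available by hypothesis, \cite[Theorem~1]{PelRatproj} evaluates the integral in \eqref{eq:planBkernel} asymptotically in terms of $\omg$ and $\nug$, so that the Bekoll\'e--Bonami type supremum \eqref{eq: desig caracterizacion gorros}, together with the factor $(1-|z|)\nug(z)$, produces \eqref{eq:planBkernel}. The equivalence (ii)$\Leftrightarrow$(iii) is purely weight-theoretic: $\om\in\DD$ supplies the comparison $\om_{2x}\asymp\omg(1-1/x)$, and a dyadic decomposition of $\int_r^1\om(s)/\nug(s)\,ds$ near the boundary translates between the tail supremum in \eqref{eq: desig caracterizacion gorros} and the moment supremum in \eqref{eq: desig caracterizacion momentos}.

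The main obstacle I anticipate is the step (i)$\Rightarrow$(ii): since no doubling hypothesis on $\nu$ is in force, $\nu_n$ is not directly comparable to a tail value $\nug(1-1/n)$, which is what a naive pointwise choice $|z|=1-1/n$ in \eqref{eq:planBkernel} immediately produces. Recovering the moment $\nu_n$ itself from the pointwise sup estimate, via the integration-by-parts identity above, requires careful tracking of the constants in the Hausdorff--Young step so that the powers of $n$ balance correctly and no spurious factor of $n$ survives in the final inequality.
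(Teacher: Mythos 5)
There is a genuine gap, and it sits exactly where you anticipated it: the step (i)$\Rightarrow$(ii). Your Hausdorff--Young computation, after choosing $|z|=1-1/n$, lands on the estimate $\nug\left(1-\tfrac1n\right)\left(\tfrac{\om}{\nug}\right)_{n+1}/\om_{2n+1}\lesssim 1$, i.e.\ on \eqref{eq: condicion momentos} (this is precisely Proposition~\ref{condi nec 2}). To upgrade this to \eqref{eq: desig caracterizacion momentos} you must replace $\nug\left(1-\tfrac1n\right)$ by $\nu_n$, i.e.\ you need $\nu_n\lesssim\nug\left(1-\tfrac1n\right)$. Only the reverse inequality $\nug\left(1-\tfrac1n\right)\lesssim\nu_n$ holds for a general radial weight; the one you need is equivalent to $\nu\in\DD$ (Lemma~\ref{caract. pesos doblantes}(iii)) and fails, e.g., for $\nu(s)=e^{-1/(1-s)}$. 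No amount of constant-tracking in the Hausdorff--Young step or use of the identity $\nu_n=n\int_0^1 s^{n-1}\nug(s)\,ds$ can produce it, because it is a property of $\nu$ alone, not of the kernel estimate. The same issue silently reappears in your claim that (ii)$\Leftrightarrow$(iii) is ``purely weight-theoretic'': the direction (iii)$\Rightarrow$(ii) again needs $\nu_x\asymp\nug(1-1/x)$.

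The paper's route avoids this by reordering the implications: it first proves (i)$\Rightarrow$(iii), which only involves the tail $\nug$ and follows from Proposition~\ref{condi nec 2} together with $\om\in\DD$ and $\om/\nug\in\DD$ (Lemma~\ref{caract creciente}(i)); it then observes that \eqref{eq: desig caracterizacion gorros} itself \emph{forces} $\nu\in\DD$, via the sandwich
\begin{equation*}
\nug(r)\lesssim \frac{\omg(r)}{\int_r^1\frac{\om(s)}{\nug(s)}\,ds}\lesssim\frac{\omg\left(\frac{1+r}{2}\right)}{\int_{\frac{1+r}{2}}^1\frac{\om(s)}{\nug(s)}\,ds}\le\nug\left(\tfrac{1+r}{2}\right),
\end{equation*}
where the first inequality is \eqref{eq: desig caracterizacion gorros}, the second uses $\om\in\DD$, and the third is the universal bound $\int_r^1\om/\nug\ge\omg(r)/\nug(r)$. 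Only after this is Lemma~\ref{caract. pesos doblantes} applied to pass to the moment form (ii); and for (iii)$\Rightarrow$(i) the paper simply invokes Theorem~\ref{th:caracterizacion B nu Dgorro}, now that $\nu\in\DD$ is available. Your direct (iii)$\Rightarrow$(i) via \cite[Theorem~1]{PelRatproj} does work (the final bound $\int_0^{|z|}\frac{dt}{\nug(t)(1-t)^2}\lesssim\frac{1}{\nug(z)(1-|z|)}$ needs only that $\nug$ is decreasing), but the cycle cannot close without the observation that (iii) implies $\nu\in\DD$.
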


We point out  that on the contrary to the boundedness of  $P_\om:L^\infty_{\nug}\to H^\infty_{\nug}$, the boundedness of $P_\om:L^\infty_{\nug}\to \B^\infty_{\nug}$, $\om,\nu\in \DD$, does not require that $\om, \nu\in \Dd$.

Section \ref{sec5} is devoted to the study of the cases with exponentially decreasing 
weights, where we use techniques different from those for Theorems 
\ref{th:caracterizacion nu Dgorro}--\ref{th:caracterizacion B omg dgorro}. 
We postpone there the detailed definitions and the formulation of the results.

Finally, concerning the notation in this paper, we denote by $C$ (respectively $C(\omega)$ etc.) generic positive constants   independent of the variables involved (resp. depending only 
on $\omega$), the values of which may vary from place to place.
As usual,  $A\lesssim B$ ($B\gtrsim A$) for nonnegative functions $A$, $B$ means that $A\le C\,B$, for some  constant $C$. Furthermore, we write $A\asymp B$  when $A\lesssim B$ and $A\gtrsim B$.

\section{Preliminary results on radial weights} \label{sec2}
Throughout the paper we will employ different descriptions of the  weight classes  $\DD$, $\Dd$.
The next result gathers several characterizations of $\DD$ proved in \cite[Lemma~2.1]{PelSum14}. 

\begin{letterlemma}
	\label{caract. pesos doblantes}
	Let $\om$ be a radial weight. Then, the following statements are equivalent:
	\begin{itemize}
		\item[\rm(i)] $\om \in \DD$;
		\item[\rm(ii)] There exist $C=C(\om)\geq 1$ and $\a_0=\a_0(\om)>0$ such that
		$$ \omg(s)\leq C \left(\frac{1-s}{1-t}\right)^{\a}\omg(t), \quad 0\leq s\leq t<1,$$
		for all $\a\geq \a_0$;
		\item[\rm(iii)]
		$$ \om_x=\int_0^1 s^x \om (s) ds\asymp \omg\left(1-\frac{1}{x}\right),\quad x \in [1,\infty);$$
		\item[\rm(iv)] There exists $C(\om)>0$ such that $\om_x\le C \om_{2x}$, for any $x\ge 1$.
	\end{itemize}
\end{letterlemma}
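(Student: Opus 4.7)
The plan is to establish the cyclic chain (i) $\Rightarrow$ (ii) $\Rightarrow$ (iii) $\Rightarrow$ (iv) $\Rightarrow$ (i). The unifying idea throughout is that, under doubling of $\widehat{\omega}$, the moment $\omega_x$ is governed by the value of $\widehat{\omega}$ at the threshold $r = 1 - 1/x$, where the kernel $s^x$ undergoes its transition from order $1$ to order $0$.

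For (i) $\Rightarrow$ (ii), I would iterate the defining inequality to obtain $\widehat{\omega}(s) \leq C^n \widehat{\omega}\bigl(1 - (1-s)/2^n\bigr)$ for every $n \in \N$, and, given $0 \leq s \leq t < 1$, choose $n$ with $2^n \leq (1-s)/(1-t) < 2^{n+1}$; monotonicity of $\widehat{\omega}$ then yields the polynomial bound with $\alpha_0 = \log_2 C$. The reverse direction (ii) $\Rightarrow$ (i) is immediate by specializing to $t = (1+s)/2$, so that $(1-s)/(1-t) = 2$.

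For (ii) $\Rightarrow$ (iii), use the integration-by-parts identity $\omega_x = x\int_0^1 s^{x-1}\widehat{\omega}(s)\,ds$, which follows from $\widehat{\omega}(1) = 0$. The lower bound $\omega_x \gtrsim \widehat{\omega}(1-1/x)$ is elementary: on the slice $s \in [1-1/x,1]$ the factor $s^x$ exceeds $(1-1/x)^x \geq e^{-2}$, so $\omega_x \geq e^{-2}\widehat{\omega}(1-1/x)$. For the matching upper bound, split the integral at $1-1/x$; the outer part is bounded by $\widehat{\omega}(1-1/x)$ trivially, and on the inner part the estimate (ii) with a fixed $\alpha > 1$ and $t = 1-1/x$ converts the integral into a constant multiple of $x^{\alpha+1}\widehat{\omega}(1-1/x) \int_0^{1-1/x} s^{x-1}(1-s)^\alpha\,ds$, which collapses to a constant multiple of $\widehat{\omega}(1-1/x)$ via the Beta-function asymptotics $B(x,\alpha+1) \asymp x^{-\alpha-1}$.

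The step (iii) $\Rightarrow$ (iv) is then immediate by applying (iii) at $x$ and at $2x$ together with the identity $1-1/(2x) = \bigl(1+(1-1/x)\bigr)/2$, which makes (i), already at hand in the chain, reduce $\omega_x \lesssim \omega_{2x}$ to $\widehat{\omega}(1-1/x) \lesssim \widehat{\omega}(1-1/(2x))$. Closing the loop via (iv) $\Rightarrow$ (i) is the delicate step: the identity $\omega_x - \omega_{2x} = \int_0^1 s^x(1-s^x)\omega(s)\,ds$ has an integrand uniformly bounded below by an absolute constant on an interval comparable to $[1-1/x,1-1/(2x)]$, so (iv) forces $\widehat{\omega}(1-1/x) - \widehat{\omega}(1-1/(2x)) \lesssim \omega_{2x}$. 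Controlling $\omega_{2x}$ by a constant multiple of $\widehat{\omega}(1-1/(2x))$ purely from (iv) -- via a dyadic splitting of $\omega_{2x}$ at the scales $1 - 2^{-k}/x$ together with a finite iteration of the moment comparison -- then yields the doubling condition on $\widehat{\omega}$ at $r = 1-1/x$. This last step is the main technical obstacle, as it is the only direction that passes from a moment inequality back to pointwise regularity of $\widehat{\omega}$ without the benefit of (ii); the remaining implications reduce to integration by parts and elementary asymptotics of $s^x$.
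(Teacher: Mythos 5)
The paper does not prove this lemma; it quotes it from \cite[Lemma~2.1]{PelSum14}, so your argument has to stand on its own. Most of it does: (i)$\Leftrightarrow$(ii) by iteration, and (ii)$\Rightarrow$(iii) via $\om_x=x\int_0^1s^{x-1}\omg(s)\,ds$, a split at $1-1/x$ and the Beta--function asymptotics, are correct and standard (only note that $(1-1/x)^x\ge e^{-2}$ fails for $x$ near $1$; treat $x\in[1,2]$ separately, where both sides of (iii) are trivially comparable to constants). The genuine logical gap is in (iii)$\Rightarrow$(iv): you invoke ``(i), already at hand in the chain''. In a cyclic proof the step issuing from (iii) may use only (iii); what you actually establish is (i)$\wedge$(iii)$\Rightarrow$(iv). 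Combined with your other implications this makes (i), (ii), (iv) equivalent and shows each implies (iii), but nothing is ever derived from (iii) alone, so the equivalence of (iii) with the rest is not proved. The repair is short: (iii) by itself gives (i), since for $x\ge2$
$$
\omg\Big(1-\tfrac1x\Big)-\omg\Big(1-\tfrac1{2x}\Big)=\int_{1-1/x}^{1-1/(2x)}\om(s)\,ds\le\Big(1-\tfrac1x\Big)^{-2x}\int_{1-1/x}^{1-1/(2x)}s^{2x}\om(s)\,ds\le16\,\om_{2x}\lesssim\omg\Big(1-\tfrac1{2x}\Big),
$$
using the upper half of (iii) at $2x$; this is exactly $\omg(r)\lesssim\omg\big(\tfrac{1+r}2\big)$ for $r=1-1/x\ge\tfrac12$, and the range $r<\tfrac12$ is trivial. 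With (i) thus available, your reduction of (iv) to $\omg(1-1/x)\lesssim\omg(1-1/(2x))$ is legitimate.

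The second weak point is Step B of (iv)$\Rightarrow$(i). Your difference identity and the lower bound for $s^x(1-s^x)$ on $[1-1/x,1-1/(2x)]$ are fine, but they reduce everything to $\om_{2x}\lesssim\omg(1-1/(2x))$, which is the nontrivial half of (iii) and is exactly where the content of the lemma sits. The ``dyadic splitting at the scales $1-2^{-k}/x$'' does not deliver it as described: the boundary pieces only telescope to $\omg(1-1/x)$, and a single application of (iv) yields $\om_{2x}\le\theta\,\om_{2x}+\omg(1-1/(2x))$ with a constant $\theta$ that need not be less than $1$. What closes the argument is iterating (iv) to get $\om_y\le C^n\om_{2^ny}$, splitting $\om_{2^ny}$ once at $1-1/y$, and using $(1-1/y)^{(2^n-1)y}\le e^{1-2^n}$: the super-exponential decay in $n$ beats $C^n$, so choosing $n$ with $C^ne^{1-2^n}\le\tfrac12$ absorbs the inner term and gives $\om_y\le2C^n\omg(1-1/y)$. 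You do say ``finite iteration of the moment comparison'', so the right ingredient is named, but the absorption—the one step that makes the implication work—is not exhibited; once it is, your Step A becomes unnecessary, since then $\omg(1-1/x)\lesssim\om_x\le C\om_{2x}\lesssim\omg(1-1/(2x))$ directly.
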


In particular, we will repeatedly use Lemma~\ref{caract. pesos doblantes}(iii) and the fact that
$\sup_{x\ge 1}\frac{\widehat{\om}\left( 1-\frac{1}{x}\right)}{\om_x}<\infty$  for any radial weight $\omega$.  

We will also use  the following descriptions of the class $\Dd$, see \cite[Lemma 4]{MorPeldelaRosa23} and \cite[Lemma B]{PeldelaRosa22}.
\begin{letterlemma}
	\label{caract. D check}
	Let $\om$ be a radial weight. Then, the
	following statements are equivalent:
	\begin{itemize}
		\item[\rm(i)] $\om	\in \Dd$;
		\item[\rm(ii)] There exist $C=C(\om)>0$ and $\b=\b(\om)>0$ such that
		$$\omg(s)\leq C \left(\frac{1-s}{1-t}\right)^{\b}\omg(t), \quad 0\leq  t\leq s<1;$$
		\item[\rm(iii)] For each (or some) $\g >0$ there exists $C=C(\g, \om)>0$ such that
		$$ \int_{0}^r \frac{ds}{\omg(s)^{\g}(1-s)} \leq \frac{C}{ \omg(r)^{\g}},\quad 0\leq r < 1;$$
\item[\rm(iv)]  There exist $K(\om)>1$ and  $C=C(\om)>0$ such that
$$\int_{r}^{1-\frac{1-r}{K}} \om(s)\,ds\ge C \int_{r}^1 \om(s)\,ds,\quad 0\le r<1.$$
	\end{itemize}
\end{letterlemma}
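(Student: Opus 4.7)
The plan is to complete the cycle (i) $\Leftrightarrow$ (iv) together with (i) $\Rightarrow$ (ii) $\Rightarrow$ (iii) $\Rightarrow$ (i); since (iii) for \emph{some} $\gamma>0$ will return through (i) to (ii) and hence to (iii) for \emph{every} $\gamma>0$, the two quantifier variants of (iii) are automatically equivalent. The equivalence (i) $\Leftrightarrow$ (iv) is essentially tautological: $\int_r^{1-(1-r)/K}\om(s)\,ds=\omg(r)-\omg(1-(1-r)/K)$, so (iv) is merely the rewriting $\omg(1-(1-r)/K)\le(1-C)\omg(r)$ of the defining inequality of $\Dd$ after relabeling the constant.

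For (i) $\Rightarrow$ (ii) I would iterate $\omg(t)\ge C\,\omg(1-(1-t)/K)$ (with $C>1$) to obtain $\omg(t)\ge C^n\,\omg(1-(1-t)/K^n)$ for every $n\in\N$; given $0\le t\le s<1$, choosing $n$ with $K^n\le(1-t)/(1-s)<K^{n+1}$ and using the monotonicity of $\omg$ yields $\omg(s)\le C^{-n}\omg(t)$, which is (ii) with $\beta=\log C/\log K$. For (ii) $\Rightarrow$ (iii), applying (ii) with the variable pair $(s,r)$ in place of $(t,s)$ for $0\le s\le r<1$ gives $\omg(s)\gtrsim((1-s)/(1-r))^{\beta}\omg(r)$, and then
\begin{equation*}
\int_0^r \frac{ds}{\omg(s)^\gamma(1-s)}\lesssim \frac{(1-r)^{\gamma\beta}}{\omg(r)^\gamma}\int_0^r(1-s)^{-1-\gamma\beta}\,ds\lesssim \frac{1}{\omg(r)^\gamma}.
\end{equation*}

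The main obstacle is (iii) $\Rightarrow$ (i). My plan is to fix $K>1$ (to be chosen large), set $r'=1-(1-r)/K$, and exploit the monotonicity of $\omg$ on $[r,r']$ to sandwich the tail of the integral in (iii) from below by a logarithmic term and from above by (iii) itself:
\begin{equation*}
\frac{\log K}{\omg(r)^\gamma}=\frac{1}{\omg(r)^\gamma}\int_r^{r'}\frac{ds}{1-s}\le\int_r^{r'}\frac{ds}{\omg(s)^\gamma(1-s)}\le\int_0^{r'}\frac{ds}{\omg(s)^\gamma(1-s)}\le\frac{C}{\omg(r')^\gamma}.
\end{equation*}
This yields $\omg(r')\le(C/\log K)^{1/\gamma}\omg(r)$, and choosing $K$ large enough that $C/\log K<1$ delivers (i). The crucial ingredient is the elementary identity $\int_r^{r'}ds/(1-s)=\log K$, which is what converts the pure integral hypothesis (iii) into the pointwise contraction defining $\Dd$.
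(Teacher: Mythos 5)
The paper does not actually prove Lemma~\ref{caract. D check}: it is imported as a known result from \cite{MorPeldelaRosa23} and \cite{PeldelaRosa22}, so there is no in-paper proof to compare against. Your argument is correct and complete --- the identity $\int_r^{1-(1-r)/K}\om = \omg(r)-\omg\bigl(1-\tfrac{1-r}{K}\bigr)$ for (i)$\Leftrightarrow$(iv), the iteration of the $\Dd$-inequality for (i)$\Rightarrow$(ii), the direct integration for (ii)$\Rightarrow$(iii), and the lower bound $\int_r^{r'}\frac{ds}{\omg(s)^\gamma(1-s)}\ge \frac{\log K}{\omg(r)^\gamma}$ for (iii)$\Rightarrow$(i) (which, via the closed cycle, also disposes of the ``some/each'' quantifier in (iii)) --- and it is precisely the standard route taken in the cited references.
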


We will also  use the following results.
\begin{lemma}
	\label{caract creciente}
Let $\om$ be a radial weight and let $\phi$ be a mapping $\phi: [0,1) \to 
(0, \infty)$. 
\begin{enumerate}
\item[\rm(i)] If $\om\in\DD$ and $\phi$ is a non-decreasing function, such that $\om\phi$ is a weight, then $\om\phi\in \DD$.
\item[\rm(ii)] If $\om\in\Dd$ and $\phi$ is a non-increasing function, then $\om\phi\in \Dd$.
\end{enumerate}
\end{lemma}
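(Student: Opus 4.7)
For part (i), the plan is to verify the upper doubling condition for $\om\phi$ directly by splitting the tail integral at the midpoint:
$\widehat{\om\phi}(r) = \int_r^{(1+r)/2}\om(s)\phi(s)\,ds + \widehat{\om\phi}\bigl(\tfrac{1+r}{2}\bigr)$.
On the first interval I would use that $\phi$ is non-decreasing to pull out $\phi\bigl(\tfrac{1+r}{2}\bigr)$ as an upper bound, leaving $\phi\bigl(\tfrac{1+r}{2}\bigr)\widehat{\om}(r)$. The assumption $\om\in\DD$ then replaces $\widehat{\om}(r)$ by a constant multiple of $\widehat{\om}\bigl(\tfrac{1+r}{2}\bigr)$, and a second use of the monotonicity of $\phi$ on the other side of the midpoint gives $\phi\bigl(\tfrac{1+r}{2}\bigr)\widehat{\om}\bigl(\tfrac{1+r}{2}\bigr) \le \widehat{\om\phi}\bigl(\tfrac{1+r}{2}\bigr)$. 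Putting these together yields $\widehat{\om\phi}(r) \le (C+1)\widehat{\om\phi}\bigl(\tfrac{1+r}{2}\bigr)$, which is exactly the membership in $\DD$.

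For part (ii), the strategy is symmetric but uses the integral characterization in Lemma~\ref{caract. D check}(iv), which furnishes constants $K>1$ and $C>0$ with $\int_r^{1-(1-r)/K}\om(s)\,ds \ge C\widehat{\om}(r)$. I would split $\widehat{\om\phi}(r)$ at the point $1-(1-r)/K$. On the first subinterval $\phi(s)\ge \phi\bigl(1-\tfrac{1-r}{K}\bigr)$ because $\phi$ is non-increasing, so combining with the above $\Dd$-estimate gives a lower bound of $C\phi\bigl(1-\tfrac{1-r}{K}\bigr)\widehat{\om}\bigl(1-\tfrac{1-r}{K}\bigr)$. The opposite monotonicity on the second subinterval supplies the converse inequality $\widehat{\om\phi}\bigl(1-\tfrac{1-r}{K}\bigr) \le \phi\bigl(1-\tfrac{1-r}{K}\bigr)\widehat{\om}\bigl(1-\tfrac{1-r}{K}\bigr)$, so the first piece is at least $C\widehat{\om\phi}\bigl(1-\tfrac{1-r}{K}\bigr)$. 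Adding the two pieces produces $\widehat{\om\phi}(r)\ge (1+C)\widehat{\om\phi}\bigl(1-\tfrac{1-r}{K}\bigr)$.

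There is no real obstacle. The only mild point to keep in mind is that the defining inequality of $\Dd$ requires a constant strictly greater than $1$; the additive splitting automatically delivers a constant of the form $1+C>1$, which takes care of this cleanly, and symmetrically in part (i) the corresponding constant $C+1$ suffices for $\DD$. The argument uses nothing beyond the monotonicity of $\phi$ to replace it by its value at the endpoint of a short interval, together with the weight-class characterizations already recalled.
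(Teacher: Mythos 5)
Your proof is correct and follows essentially the same route as the paper: both arguments split the tail integral at $\frac{1+r}{2}$ (resp.\ at $1-\frac{1-r}{K}$, via Lemma~\ref{caract. D check}(iv)) and use the monotonicity of $\phi$ to pull out its value at the splitting point on each side. Your extra remark that the additive decomposition automatically yields a constant $1+C>1$ for the $\Dd$ condition is a nice touch of care, but it is the same computation the paper performs through the equivalent integral reformulations of $\DD$ and $\Dd$.
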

\begin{proof}
	(i). Observe that $\om \in \DD$ if and only if there exists $C(\om)>0$ such that
	$$
	\int_r^{\frac{1+r}{2}}\om(s)ds \leq C \int_{\frac{1+r}{2}}^1 \om(s)ds, \quad 0\leq r <1.
	$$
	Applying this, and the fact that $\phi$ is non-decreasing, we obtain that
	\begin{equation*}
		\begin{split}
			\frac{1}{\phi(\frac{1+r}{2})}\int_r^{\frac{1+r}{2}}\om(s)\phi(s) ds &\leq \int_r^{\frac{1+r}{2}}\om(s) ds 
\\ & \leq C \int_{\frac{1+r}{2}}^1 \om(s)ds \leq C \frac{1}{\phi(\frac{1+r}{2})}\int_{\frac{1+r}{2}}^1 \om(s)\phi(s) ds, \quad 0\leq r <1,
		\end{split}
	\end{equation*}
	then 
	$$
	\int_r^{\frac{1+r}{2}}\om(s)\phi(s)ds \leq C \int_{\frac{1+r}{2}}^1 \om(s)\phi(s)ds, \quad 0\leq r <1,
	$$
	so $\om\phi\in \DD$.

(ii). By Lemma~\ref{caract. D check}(iv), there holds $\om\in\Dd$ if and only if there are  $K>1$ and  $C(\om)>0$ such that
	$$
	\int_r^{1-\frac{1-r}{K}}\om(s)ds \ge C \int_{1-\frac{1-r}{K}}^1 \om(s)ds, \quad 0\leq r <1.
	$$
So, since $\phi$ is a non-increasing function,
\begin{equation*}
		\begin{split}
	&\int_r^{1-\frac{1-r}{K}}\om(s)\phi(s)\,ds \ge \phi\left(1-\frac{1-r}{K}\right) \int_r^{1-\frac{1-r}{K}}\om(s)\,ds
\\ & \ge
 C  \phi\left(1-\frac{1-r}{K}\right)\int_{1-\frac{1-r}{K}}^1 \om(s)ds
\ge  \int_{1-\frac{1-r}{K}}^1 \om(s)\phi(s)\,ds, \quad 0\leq r <1.
	\end{split}
	\end{equation*}
Hence $\om\phi\in \Dd$. This finishes the proof.
\end{proof}

\begin{lemma}\label{prop: producto en D}
	Let $\om \in \DDD$ and $\nu\in \DDD$.  Then $\om \nug\in \DDD$ and
\begin{equation}\label{eq:n1}
 \int_r^1\om(s)\nug(s)ds \asymp  \omg(r)\nug(r), \quad 0\le r<1.
\end{equation}
\end{lemma}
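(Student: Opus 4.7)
The plan is to first establish the asymptotic equivalence \eqref{eq:n1}, and then use it together with Lemma~\ref{caract creciente} to conclude $\om\nug\in\DDD$.

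For the upper bound in \eqref{eq:n1}, the monotonicity of $\nug$ immediately gives $\int_r^1 \om(s)\nug(s)\,ds\le \nug(r)\omg(r)$. The lower bound is the main point. The idea is to restrict the integration to a sub-interval $[r,1-(1-r)/K]$ on which both factors remain comparable to their values at $r$. I will choose $K>1$ to be the constant furnished by Lemma~\ref{caract. D check}(iv) applied to $\om\in\Dd$, so that $\int_r^{1-(1-r)/K}\om(s)\,ds\gtrsim \omg(r)$. On that same window, applying Lemma~\ref{caract. pesos doblantes}(ii) to $\nu\in\DD$ with $s=r$ and $t=1-(1-r)/K$, the ratio $(1-s)/(1-t)$ equals $K$, which yields $\nug(1-(1-r)/K)\gtrsim \nug(r)$; since $\nug$ is non-increasing, this forces $\nug(s)\gtrsim\nug(r)$ for every $s\in[r,1-(1-r)/K]$. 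Multiplying these two estimates and integrating over the window then produces $\int_r^1\om(s)\nug(s)\,ds\gtrsim \omg(r)\nug(r)$.

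Once \eqref{eq:n1} is in hand, the membership $\om\nug\in\DDD$ is quick. The inclusion $\om\nug\in\Dd$ follows directly from Lemma~\ref{caract creciente}(ii) applied with $\phi=\nug$, which is non-increasing. For $\om\nug\in\DD$, I combine \eqref{eq:n1} with the doubling properties of $\omg$ and $\nug$ coming from $\om,\nu\in\DD$ to obtain
$$\int_r^1\om(s)\nug(s)\,ds\asymp \omg(r)\nug(r)\lesssim \omg\!\left(\tfrac{1+r}{2}\right)\nug\!\left(\tfrac{1+r}{2}\right)\asymp \int_{(1+r)/2}^1\om(s)\nug(s)\,ds.$$

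The main obstacle is the lower bound of \eqref{eq:n1}: one has to couple the two weights on a single sub-interval so that $\om$ carries a definite share of its tail mass while $\nug$ simultaneously stays comparable to $\nug(r)$. The scale $K$ is forced by $\om\in\Dd$, and then $\nu\in\DD$ controls the oscillation of $\nug$ precisely at that same scale. Everything else reduces to routine manipulation with the characterizations of $\DD$ and $\Dd$ recorded in Section~\ref{sec2}.
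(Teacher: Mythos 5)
Your proof is correct and follows essentially the same route as the paper: the lower bound in \eqref{eq:n1} via the window $[r,1-\frac{1-r}{K}]$ furnished by Lemma~\ref{caract. D check}(iv) combined with Lemma~\ref{caract. pesos doblantes}(ii) for $\nu$, the inclusion $\om\nug\in\Dd$ from Lemma~\ref{caract creciente}(ii), and the inclusion $\om\nug\in\DD$ from the chain $\int_r^1\om\nug\,ds\asymp\omg(r)\nug(r)\lesssim\omg(\frac{1+r}{2})\nug(\frac{1+r}{2})\lesssim\int_{(1+r)/2}^1\om\nug\,ds$. No gaps.
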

\begin{proof}
By Lemma~\ref{caract creciente}(ii), we have  $\om \nug\in \Dd$. 
On the other hand,  by Lemma~\ref{caract. D check}(iv)  there are  $K>1$ and  $C(\om)>0$ such that
	$$
	\int_r^{1-\frac{1-r}{K}}\om(s)ds \ge C \int_{r}^1 \om(s)ds, \quad 0\leq r <1.
	$$
So, Lemma~\ref{caract. pesos doblantes}(ii) together with the above inequality 
yield
$$ \int_{r}^1 \om(s)\nug(s)\,ds\ge \nug\left(1-\frac{1-r}{K} \right) \int_{1-\frac{1-r}{K}}^1\om(s)ds 
\gtrsim \nug(r)\omg(r), \quad 0\le r<1.$$
Therefore, using that $\om,\nu\in\DD$
$$
\int_r^1\om(s)\nug(s)ds \leq \omg(r)\nug(r) \lesssim \omg \left (\frac{1+r}{2} \right )\nug \left (\frac{1+r}{2} \right ) \lesssim \int_{\frac{1+r}{2}}^1\om(s)\nug(s)ds,
$$
so $\om\nug \in \DD$ and \eqref{eq:n1} holds. This finishes the proof.
\end{proof}

\section{Preliminary results on the boundedness of $P_\om: L^\infty_{\nug}\to H^\infty_{\nug}$}  \label{sec3}

\subsection{On the boundedness of the Szeg\"o projection in growth spaces} \label{sec3.1}

Some known results on  the boundedness of the Szeg\"o  projection  $R$ (also known as 
the Riesz projection) %in $L^\infty$ and growth spaces 
will be relevant for our studies on  Bergman projections. Recall that if $f$ is a 
complex valued harmonic function on 
$\D \ni z = r e^{i\theta}$ with a series representation $f(r e^{i \theta}) =  
\sum_{m=-\infty}^\infty f_m r^{|m|} e^{i m\theta}$ converging at least uniformly on every 
compact subset of $\D$, then the Szeg\"o projection $R$ by definition maps $f$ to the analytic 
function $\sum_{m=0}^\infty f_m r^m e^{i m\theta}$, where the series also converges at least 
uniformly on compact subsets. See \cite[Section 9.1]{Zhu}.

As for the known proofs of Theorem \ref{th: proyeccion no acotada}, if $P$ were a bounded 
projection from $L^\infty$ onto $H^\infty$, then one could consider the restriction of $P$ 
%%onto $L^\infty(\partial \D)$ 
as a bounded projection from $L^\infty(\partial \D)$ onto  $H^\infty$. 
%By the Poisson  extension, $L^\infty(\partial \D)$ equals the space of bounded harmonic, %complex valued  functions on $\D$, endowed with the (unweighted) sup-norm. 
But it is well-known that such a projection does not exist. Indeed, by techniques 
explained in \cite[Theorem 5.18, Example 5.19]{Rud}, see also \cite[Theorem 9.7]{Zhu}, one 
can show that the existence of such a bounded projection would imply the untrue fact 
that the Szeg\"o projection $R: L^\infty(\partial \D) \to H^\infty$ is bounded (e.g.\cite{Zhu}, p. 258). 

Another way to see Theorem \ref{th: proyeccion no acotada} follows from the fact
that $H^\infty$ is known not to be isomorphic to the Banach space $\ell^\infty$  of all bounded sequences
(e.g. \cite[Theorem 1.1]{Lu2}). Recall that a closed subspace of a Banach space is called 
{\it complemented},  if there exists a bounded projection onto it. Now, by \cite{Pel}, \cite[p.111]{LF},
the space  $L^\infty$ 
is isomorphic to the Banach space $\ell^\infty$, which is a 
{\it prime} Banach space (\cite[Theorem 2.a.7]{LF}), i.e. all of its complemented subspaces 
are again isomorphic to  $\ell^\infty$. We thus find that $H^\infty$ is not complemented in
$L^\infty$. 

The next result is probably also known to experts, but we indicate the simple proof for the convenience of the reader.

\begin{proposition}\label{pr:Szego}
Let $\omega$ and $\nu$  be radial weights such that $P_\omega: L_{\widehat{\nu}}^\infty \to H_{\widehat{\nu}}^\infty$ is bounded. Then,
the restriction of $P_\omega$ onto the closed subspace $h_{\widehat{\nu}}^\infty$ of 
$L_{\widehat{\nu}}^\infty$, consisting of harmonic functions,  coincides with the
Szeg\"o projection and consequently, $R: h_{\widehat{\nu}}^\infty \to H_{\widehat{\nu}}^\infty$ is bounded. 
\end{proposition}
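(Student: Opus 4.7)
The plan is to establish $P_\omega f = Rf$ for every $f \in h^\infty_{\widehat{\nu}}$; the boundedness of $R \colon h^\infty_{\widehat{\nu}} \to H^\infty_{\widehat{\nu}}$ will then follow at once from the hypothesis. The computation rests on two ingredients already introduced in the paper: the series expansion $B^\omega_z(\zeta) = \sum_{n=0}^\infty (\overline{z}\zeta)^n/(2\omega_{2n+1})$ of the reproducing kernel, and the harmonic Fourier expansion $f(\rho e^{i\theta}) = \sum_{m \in \Z} f_m \rho^{|m|} e^{im\theta}$.

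I would first work with the truncated kernels $K_N(z, \zeta) = \sum_{n=0}^N (\overline{z}\zeta)^n/(2\omega_{2n+1})$, which are polynomials in $\zeta$, so that the integral against $f$ can be evaluated term by term. Passing to polar coordinates and invoking the orthogonality of $\{e^{im\theta}\}$, for every $n \ge 0$ one finds
\[
\int_\D f(\zeta) \overline{\zeta}^n \omega(\zeta)\, dA(\zeta) = 2 f_n \int_0^1 \rho^{2n+1} \omega(\rho)\, d\rho = 2 f_n \omega_{2n+1},
\]
hence
\[
\int_\D f(\zeta)\, \overline{K_N(z, \zeta)}\, \omega(\zeta)\, dA(\zeta) = \sum_{n=0}^N \frac{z^n}{2\omega_{2n+1}} \cdot 2 f_n \omega_{2n+1} = \sum_{n=0}^N f_n z^n,
\]
the $N$-th Taylor polynomial of $Rf(z)$. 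As $N \to \infty$, this tends pointwise to $Rf(z)$, since the analytic part of the harmonic function $f$ is itself analytic in $\D$ and its Taylor series therefore converges throughout $\D$.

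To pass to the limit on the left, I would exploit the uniform-in-$\zeta$ bound
\[
\sup_{\zeta \in \D} \bigl| B^\omega_z(\zeta) - K_N(z, \zeta) \bigr| \le \sum_{n > N} \frac{|z|^n}{2\omega_{2n+1}} \xrightarrow[N \to \infty]{} 0,
\]
which is valid because $\sum_n |z|^n/(2\omega_{2n+1}) = B^\omega_{\sqrt{|z|}}(\sqrt{|z|})$ is the squared norm of the evaluation functional at $\sqrt{|z|} \in \D$ on $A^2_\omega$ and hence finite for $|z| < 1$. Combined with the global integrability $\int_\D \omega(\zeta)/\widehat{\nu}(\zeta)\, dA(\zeta) < \infty$ (verified below), this yields
\[
\left| \int_\D f(\zeta)\bigl[\overline{B^\omega_z(\zeta)} - \overline{K_N(z, \zeta)}\bigr] \omega(\zeta)\, dA(\zeta) \right| \le \|f\|_{\infty, \widehat{\nu}} \sup_{\zeta} |B^\omega_z - K_N| \int_\D \frac{\omega}{\widehat{\nu}}\, dA \longrightarrow 0,
\]
so that the left side converges to $P_\omega f(z)$ and we obtain $P_\omega f = Rf$.

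The only step that really uses the boundedness hypothesis is the integrability $\int_\D \omega/\widehat{\nu}\, dA < \infty$, and I expect this to be the main (though still mild) obstacle. I would extract it by testing $P_\omega$ on $\chi \equiv 1/\widehat{\nu} \in L^\infty_{\widehat{\nu}}$ at the origin: since $B^\omega_0 \equiv 1/(2\omega_1)$ is constant, $P_\omega\chi(0) = (2\omega_1)^{-1} \int_\D \omega/\widehat{\nu}\, dA$, and the hypothesis then forces this quantity to be finite.
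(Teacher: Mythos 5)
Your proposal is correct and follows essentially the same route as the paper: expand the kernel as $\sum_n(\overline z\zeta)^n/(2\omega_{2n+1})$, expand $f$ in its harmonic Fourier series, and let orthogonality collapse the integral to $\sum_{n\ge0}f_nz^n=Rf(z)$. The only difference is that you carefully justify the interchange of summation and integration (via truncated kernels and the integrability of $\omega/\widehat{\nu}$ extracted by testing $P_\omega$ on $1/\widehat{\nu}$ at the origin), a step the paper's computation performs without comment; your justification is sound.
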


\medskip

\begin{proof}
Assume that $P_\omega: L_{\widehat{\nu}}^\infty \to H_{\widehat{\nu}}^\infty$ is bounded and  $f(r e^{i \theta}) =  \sum_{m=-\infty}^\infty f_m r^{|m|} e^{i m\theta} \in h_{\widehat{\nu}}^\infty \subset L_{\widehat{\nu}}^\infty$ and $z = r e^{i \theta} \in \D$. Bearing in mind that  $B^\om_z(\z)=\sum_{n=0}^\infty\frac{\left(\overline{z}\z\right)^n}{2\om_{2n+1}}$ we obtain
\begin{equation}
\begin{split}
P_\omega f(z) & = \int_\D \sum_{n=0}^\infty \frac{\left(z \overline{\z}\right)^n}{
2\om_{2n+1}} %e_n(z)\overline{ e_n(\zeta)} 
f(\zeta) \omega(\zeta) dA(\zeta)  
\\
& = \frac1\pi \int_0^1 \int_0^{2 \pi} \sum_{n=0}^\infty  \big( 2\omega_{2n+1} \big)^{-1} r^n e^{i n \theta} 
\sum_{m=-\infty}^\infty  
f_m s^{n+|m|+1}e^{i\varphi(m-n)}  \omega(s) d\varphi ds
\\
& =   \sum_{n=0}^\infty \big( \omega_{2n+1} \big)^{-1} f_n r^n e^{i n \theta} 
 \int_0^1 s^{2 n +1}   \omega(s)  ds =  \sum_{n=0}^\infty f_n r^n e^{i n \theta}  = Rf(z) . 
\end{split}
\end{equation}
This finishes the proof.
\end{proof}

Now, let us observe that the "only if part" of  Corollary~\ref{co: proyeccion no acotada pesos} can be  deduced from Proposition~\ref{pr:Szego} and  some results in \cite{Lu2}
(see also \cite{Lu1}). In fact, some calculations show that
$\nu\in\DD$ if and only if 
$$
 \sup_{n \in \mathbb{N}} \frac{\nug(1- 2^{-n})}{\nug(1- 2^{-n-1})} < \infty$$
and $\nu\in\Dd$ if and only if 
$$
 \inf_{k \in \mathbb{N}} \lim\sup_{n \in \mathbb{N}} \frac{\nug(1- 2^{-n-k})}{\nug(1- 2^{-n})} < \infty. 
$$
Moreover, Proposition~6.4 and the first lines of p. 26 of  \cite{Lu2} show that if 
$\nu\in\DD$, then the Szeg\"o projection $R$ is bounded from $h_{\nug}^\infty$ to $H_{\nug}^\infty$ if and only 
if $\nu\in\Dd$, which together with  Proposition~\ref{pr:Szego} proves  that if $\om$ is a radial weight and $\nu\in\DD\setminus\Dd$ then $P_\om$ is not bounded from $L^\infty_{\nug}$ to  
$H^\infty_{\nug}$.
Later on we will prove Corollary~\ref{co: proyeccion no acotada pesos} using  Theorem~\ref{th:caracterizacion nu Dgorro}.

\subsection{Preliminary results on the boundedness of 
$P_\om: L^\infty_{\nug}\to H^\infty_{\nug}$} \label{sec3.2}

In this section we will prove some preliminary results of their own interest, which will
 be used in the  proof of  Theorem~\ref{th:caracterizacion nu Dgorro}.
 
\begin{proposition}\label{prop: norma Hinf}
	Let $\om$ and $\nu$ be radial weights. Then, $P_\om: L^\infty_{\nug}\to H^\infty_{\nug}$ is bounded if and only if
	\begin{equation}\label{eq: norma Hinf}
	\sup_{a\in\D} \nug(a)\int_{\D} |B^\om_{a}(\z)|\,\frac{\om(\z)}{\nug(\z)}dA(\z)<\infty.
	\end{equation}
\end{proposition}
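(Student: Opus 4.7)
The proof is a standard duality/test-function argument that relates the operator norm of $P_\om$ to the $L^1$-type integral of the reproducing kernel against a weight. I will prove the two implications separately.

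For the easy direction (the condition implies boundedness), the plan is to start from the pointwise formula
$$|P_\om f(a)| \le \int_\D |f(\z)|\,|B^\om_a(\z)|\,\om(\z)\,dA(\z),$$
insert the factor $\nug(\z)/\nug(\z)$ inside the integral, and bound $|f(\z)|\nug(\z)$ by $\|f\|_{\infty,\nug}$. Multiplying both sides by $\nug(a)$ and taking the supremum over $a\in\D$ yields
$$\|P_\om f\|_{\infty,\nug} \le \|f\|_{\infty,\nug}\, \sup_{a\in\D}\nug(a)\int_\D |B^\om_a(\z)|\,\frac{\om(\z)}{\nug(\z)}\,dA(\z),$$
and the analyticity of $P_\om f$ takes care of membership in $H^\infty_{\nug}$.

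For the converse, the idea is the usual choice of a unimodular test function: for each fixed $a\in\D$, define
$$f_a(\z) = \frac{1}{\nug(\z)}\cdot\frac{B^\om_a(\z)}{|B^\om_a(\z)|}\quad\text{if } B^\om_a(\z)\ne 0,$$
and $f_a(\z) = 0$ otherwise. Since $B^\om_a$ is analytic and not identically zero, its zero set has Lebesgue measure zero, so $f_a$ is measurable; moreover $|f_a(\z)|\nug(\z)\le 1$ almost everywhere, so $\|f_a\|_{\infty,\nug}\le 1$. The key computation is that
$$P_\om f_a(a) = \int_\D f_a(\z)\,\overline{B^\om_a(\z)}\,\om(\z)\,dA(\z) = \int_\D \frac{|B^\om_a(\z)|}{\nug(\z)}\,\om(\z)\,dA(\z),$$
because $f_a(\z)\,\overline{B^\om_a(\z)} = |B^\om_a(\z)|/\nug(\z)$ on the set where $B^\om_a\ne 0$. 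Applying the assumed boundedness of $P_\om$ then gives
$$\nug(a)\int_\D |B^\om_a(\z)|\,\frac{\om(\z)}{\nug(\z)}\,dA(\z) \le \nug(a)|P_\om f_a(a)| \le \|P_\om f_a\|_{\infty,\nug} \le \|P_\om\|,$$
and the supremum over $a$ yields \eqref{eq: norma Hinf}.

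The only delicate point is verifying that $f_a$ is honestly measurable and that $P_\om f_a(a)$ is well defined by the integral formula; both issues are handled by noting that $\{B^\om_a=0\}$ has zero area measure and that $\nug$ is strictly positive on $\D$ by our standing assumption on radial weights. Everything else is routine.
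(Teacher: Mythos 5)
Your proposal is correct and follows essentially the same route as the paper: the sufficiency is the trivial pointwise estimate, and the necessity uses exactly the same unimodular test functions $f_a(\z)=B^\om_a(\z)/\bigl(\nug(\z)\,|B^\om_a(\z)|\bigr)$ with $\|f_a\|_{L^\infty_{\nug}}=1$, evaluated at $z=a$ to recover the kernel integral. The only difference is that you spell out the measurability of $f_a$ and the easy implication, which the paper leaves implicit.
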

\begin{proof}
It is clear that $P_\om: L^\infty_{\nug}\to H^\infty_{\nug}$ is bounded if \eqref{eq: norma Hinf} holds. Reciprocally, assume that 
 $P_\om: L^\infty_{\nug}\to H^\infty_{\nug}$ is bounded  and 
		for each $a\in\D$, consider the function
		$$ f_a(\z)
		=\begin{cases} \frac{B^\om_{a}(\z)}{\nug(\z)\abs{B^\om_{a}(\z)}} & \text{ if $B^\om_{a}(\z)\neq 0$ }\\  0 & \text{if $B^\om_{a}(\z)= 0$}\end{cases}. 
		$$
		Then, for each $a\in\D$, $\|f_a\|_{L^\infty_{\nug}}=1$ and 
		\begin{equation*}\begin{split}
				\nug(|a|)\int_{\D} |B^\om_{a}(\z)|\, \frac{\om(\z)}{\nug(\z)}dA(\z) & =\nug(|a|) \int_{\D}f_a(\z) \overline{B^\om_{a}(\z)}\,\om(\z)dA(\z)
				\\ & \le \sup_{z\in\D} \nug(|z|) \left| \int_{\D}f_a(\z) \overline{B^\om_{z}(\z)}\,\om(\z)dA(\z) \right|\le \| P_\om\|\|f_a\|_{L^\infty_{\nug}}=\| P_\om\|. 
		\end{split}\end{equation*}
Therefore,   \eqref{eq: norma Hinf} holds. This finishes the proof.
\end{proof}

It is worth noticing that an argument  similar to that of Proposition~\ref{prop: norma Hinf} allows to give an alternative proof of the fact that
 $P_\omega$ is not  bounded from $L^\infty$ to $H^\infty$  for any radial weight $\omega$. In fact, arguing as in the the proof of  Proposition~\ref{prop: norma Hinf} it follows that
$P_\omega$ is  bounded from $L^\infty$ to $H^\infty$ if and only if
$$\sup_{a\in\D} \int_{\D} |B^\om_{a}(\z)|\, \om(\z) dA(\z)<\infty.$$
Moreover, using Hardy's inequality  \cite[Theorem 5.1]{Duren}, for each $a\in\D\setminus\{0\}$
		\begin{equation*}\begin{split}
				\int_{\D} |B^\om_{a}(\z)|\,\om(\z)dA(\z) & 
				= 2 \int_0^1 M_1(B^\om_{a},s) s\omega(s)\,ds
				\\ & \ge 2\pi \int_0^1 \left( \sum_{n=0}^\infty \frac{s^n|a|^n}{2(n+1)\omega_{2n+1}} \right) s\omega(s)\,ds
				\\ & = \pi \sum_{n=0}^\infty \frac{|a|^n}{(n+1)\omega_{2n+1}} \int_0^1 s^{n+1}\omega(s)\,ds
				\\ & = \pi \sum_{n=0}^\infty \frac{|a|^n\omega_{n+1}}{(n+1)\omega_{2n+1}}\ge 
				\pi \sum_{n=0}^\infty \frac{|a|^n}{n+1}=\frac{\pi}{|a|}\log\left( \frac{1}{1-|a|}\right), 
		\end{split}\end{equation*}
		Therefore, $\sup_{a\in\D} \int_{\D} |B^\om_{a}(\z)|\,\om(\z)dA(\z)=\infty$ and consequently   $P_\omega$ is not  bounded from $L^\infty$ to $H^\infty$ .

We will use Proposition~\ref{prop: norma Hinf} to  obtain a necessary condition for the boundedness of $P_\om:L^\infty_{\nug}\to H^\infty_{\nug}$ in terms of moments and tails of $\om$ and $\nu$.
\begin{proposition}\label{prop: condicion necesaria}
	Let $\om$ and $\nu$ radial weights such that $P_\om:L^\infty_{\nug}\to H^\infty_{\nug}$ is bounded.  Then, there exists $C(\om,\nu)>0$ such that
	\begin{equation}\label{eq: condicion momentos}
	\left (\frac{\om}{\nug}\right )_x \leq C\frac{\om_{2x}}{\nug\left ( 1-\frac{1}{x}\right )},\quad x\geq 1.
	\end{equation}
\end{proposition}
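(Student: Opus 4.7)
My plan is to combine Proposition~\ref{prop: norma Hinf} with a lower bound on the reproducing-kernel integral extracted from the Hausdorff--Young inequality, and then transfer the resulting integer-index estimate to arbitrary real $x\ge 1$ via the log-convexity of the moments of $\om$. By rotation invariance of both $\om$ and $\nug$ I would restrict to $a=r\in[0,1)$ real, pass to polar coordinates $\int_\D|B^\om_r|\,\om/\nug\,dA=2\int_0^1 M_1(B^\om_r,s)\,s\om(s)/\nug(s)\,ds$, and observe that the $n$-th Fourier coefficient of $\phi\mapsto B^\om_r(se^{i\phi})$ equals $(rs)^n/(2\om_{2n+1})$, which the $L^1$ endpoint of the Hausdorff--Young inequality \cite[Theorem~6.1]{Duren} bounds by $M_1(B^\om_r,s)$. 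Retaining only the $n$-th term and combining with Proposition~\ref{prop: norma Hinf} should yield, for every integer $n\ge 0$ and $r\in[0,1)$,
$$\left(\frac{\om}{\nug}\right)_{n+1}\le C\,\frac{\om_{2n+1}}{r^n\nug(r)}.$$

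Next, given real $x\ge 1$, I would specialize to $m=\lfloor x\rfloor\ge 1$, $n=m-1$, and $r=1-1/m$. This choice makes $r^{m-1}=(1-1/m)^{m-1}\ge e^{-1}$ (with the convention $0^0=1$ at $m=1$), while the monotonicity $(\om/\nug)_x\le(\om/\nug)_m$ and $\nug(1-1/m)\ge\nug(1-1/x)$, both stemming from $m\le x$, transform the previous estimate into
$$\left(\frac{\om}{\nug}\right)_x\le C'\,\frac{\om_{2m-1}}{\nug(1-1/x)}.$$

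The hard part will be the last step, namely replacing $\om_{2m-1}$ by $\om_{2x}$ over the bounded gap $2x-(2m-1)\in[1,3)$ for a \emph{general} radial weight with no doubling hypothesis. My plan is to exploit the log-convexity of $y\mapsto\log\om_y$ (a direct consequence of H\"older applied to $\int_0^1 s^y\om(s)\,ds$), which forces the ratios $\om_{y+1}/\om_y$ to be non-decreasing in $y$ and, being bounded above by $1$, bounded below by the positive constant $\om_1/\om_0$. Iterating this at most three times gives $\om_{2m-1}\le C(\om)\om_{2x}$, completing the reduction to the desired inequality~\eqref{eq: condicion momentos}.
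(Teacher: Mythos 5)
Your proposal is correct and follows essentially the same route as the paper: reduce to the kernel-integral condition of Proposition~\ref{prop: norma Hinf}, lower-bound $M_1(B^\om_r,s)$ by a single Fourier coefficient $(rs)^n/(2\om_{2n+1})$, evaluate at $r=1-1/m$, and pass from integers to real $x$ by monotonicity of the moments. The only (welcome) difference is that you justify the moment-shift step $\om_{2m-1}\lesssim\om_{2x}$ explicitly via log-convexity of $y\mapsto\om_y$, a point the paper's chain of inequalities ($\om_{2N}\le C\om_{2(N+1)}$) uses without comment.
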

\begin{proof}
	By Proposition~\ref{prop: norma Hinf} there exists $C(\om,\nu)>0$, such that 
	$$
	\sup_{z\in \D} \nug(z) \int_\D \abs{B^\om_z(\z)}\frac{\om(\z)}{\nug(\z)}dA(\z) \leq C,
	$$
	and by applying \cite[Theorem 6.1]{Duren},
	\begin{equation}
		\begin{split}
			C &\geq  	\sup_{z\in \D} \nug(z) \int_\D \abs{B^\om_z(\z)}\frac{\om(\z)}{\nug(\z)}dA(\z) \asymp 	\sup_{z\in \D} \nug(z) \int_0^1 \frac{\om(r)}{\nug(r)}M_1(B^\om_z,r)dr \\
			&\gtrsim \sup_{z\in \D, n\in \N} \nug(z) \int_0^1 \frac{\om(r)}{\nug(r)}\frac{\abs{z}^nr^n}{\om_{2n+1}}dr \geq \nug\left (1-\frac{1}{n} \right )\frac{\left (\frac{\om}{\nug} \right )_n}{\om_{2n+1}}\left (1-\frac{1}{n} \right )^n \\
			&\gtrsim \nug\left (1-\frac{1}{n} \right )\frac{\left (\frac{\om}{\nug} \right )_n}{\om_{2n}}, \quad n\in \N, \quad n\ge 2.
		\end{split}
	\end{equation}
For $x\geq 1$, take $N\in \N$ such that $N\leq x < N+1$, then the previous inequality yields 
	$$
	\left (\frac{\om}{\nug} \right)_x \leq 	\left (\frac{\om}{\nug} \right )_N \leq C \frac{\om_{2N}}{\nug\left ( 1-\frac{1}{N}\right )} \leq C\frac{\om_{2(N+1)}}{\nug\left ( 1-\frac{1}{N}\right )} \leq C\frac{\om_{2x}}{\nug\left ( 1-\frac{1}{x}\right )}.
	$$
	This finishes the proof.
\end{proof}

\section{Boundedness of  Bergman projection on growth space induced 
by radial doubling weight.}  \label{sec4}

%%$P_\om:L^\infty_{\nug}\to H^\infty_{\nug}$

\subsection{Boundedness of $P_\om:L^\infty_{\nug}\to H^\infty_{\nug}$ assuming conditions on $\nu$.} 
%%\subsection{Boundedness of $P_\om:L^\infty_{\nug}\to H^\infty_{\nug}$ for $\nu \in \DD$}
\label{sec4.1}
Theorem~\ref{th:caracterizacion nu Dgorro} and Corollary~\ref{co: proyeccion no acotada pesos}  will be proved in this section. Prior to presenting their proofs, two preliminary results are needed.

\begin{proposition}\label{prop: necesidad dgorro}
	Let $\om$ be a radial weight and $\nu \in \DD$ such that \eqref{eq: desig caracterizacion momentos} holds. Then,  
%if there exists a constant $C(\om,\nu)>0$ such that
%	\begin{equation}\label{eq: condicion momentos todo}
%			\om_x \leq C \frac{(\om\nug)_{2x}}{\nu_x}, \quad x\geq 1,
%	\end{equation}
%	then
 $\om \in \DD$ and $\om\nug\in \DD$.
\end{proposition}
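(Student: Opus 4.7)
The plan is first to establish $\om\in\DD$ and then to deduce $\om\nug\in\DD$.

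For Step~1, I would check the characterization $\om_y\asymp\omg(1-1/y)$ for $y\ge 1$ of Lemma~\ref{caract. pesos doblantes}(iii); only the upper bound is nontrivial. Since $\nug$ is non-increasing and $(1-1/(2x))^x\ge 1/2$ for $x\ge 1$, restricting the integral defining $(\om/\nug)_x$ to $[1-1/(2x),1]$ yields the sharp lower bound
\[
\Bigl(\frac{\om}{\nug}\Bigr)_x \ge \frac{\omg(1-1/(2x))}{2\,\nug(1-1/(2x))}.
\]
Combining this with the hypothesis \eqref{eq: desig caracterizacion momentos}, and using that $\nu_x\asymp\nug(1-1/x)\asymp\nug(1-1/(2x))$ by Lemma~\ref{caract. pesos doblantes}(iii) together with the doubling of $\nug$, produces $\omg(1-1/(2x))\lesssim\om_{2x}$. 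The automatic inequality $\om_{2x}\ge(1-1/(2x))^{2x}\omg(1-1/(2x))\gtrsim\omg(1-1/(2x))$ then gives $\om_{2x}\asymp\omg(1-1/(2x))$; setting $y=2x$ (the case $y\in[1,2)$ being trivial) proves $\om\in\DD$.

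For Step~2, I would establish the two-sided estimate $(\om\nug)_x\asymp\nug(1-1/x)\,\om_x$ for $x\ge 1$ and then conclude from the doubling of $\nug$ and of $\om_x$ that $(\om\nug)_x\asymp(\om\nug)_{2x}$, whence $\om\nug\in\DD$ by Lemma~\ref{caract. pesos doblantes}(iv). For the upper bound I would apply Lemma~\ref{caract. pesos doblantes}(ii) to $\nu$ to get $\nug(r)\lesssim(1+x(1-r))^\a\nug(1-1/x)$ uniformly for $r\in[0,1)$, split $(\om\nug)_x$ at $r=1-1/x$, and dyadically decompose $[0,1-1/x]$; on the $k$-th annulus $[1-2^k/x,1-2^{k-1}/x]$ the factor $r^x$ is $\lesssim e^{-2^{k-1}}$ while $\omg(1-2^k/x)\lesssim C^k\omg(1-1/x)\asymp C^k\om_x$ by Step~1 and iterating doubling, and the resulting series is summable. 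For the matching lower bound I would combine Cauchy--Schwarz $\om_{2x}^2\le(\om\nug)_{2x}(\om/\nug)_{2x}$ with the hypothesis at $2x$ to obtain $(\om\nug)_{2x}\gtrsim\nu_{2x}\om_{2x}^2/\om_{4x}$, then invoke $\om\in\DD$ (so $\om_{2x}\asymp\om_{4x}\asymp\om_x$) and $\nu\in\DD$ (so $\nu_{2x}\asymp\nug(1-1/x)$).

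The main obstacle will be the dyadic upper bound on $(\om\nug)_x$, where one has to absorb the factor $x^\a$ coming from the polynomial control of $\nug$ against both the super-exponential decay of $r^x$ away from $1-1/x$ and the controlled growth $\omg(1-2^k/x)\le C^k\omg(1-1/x)$ afforded by Step~1.
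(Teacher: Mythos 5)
Your Step~1 does not prove $\om\in\DD$, and since Step~2 leans on it (``by Step~1 and iterating doubling'', and $\om_{2x}\asymp\om_{4x}$ in the lower bound), the whole argument collapses. The inequality you extract from the hypothesis, $\omg(1-\tfrac{1}{2x})\lesssim\om_{2x}$, is precisely the estimate that holds for \emph{every} radial weight (the paper recalls $\sup_{x\ge1}\omg(1-\tfrac1x)/\om_x<\infty$ right after Lemma~\ref{caract. pesos doblantes}), so it carries no information about $\om$; and the ``automatic inequality'' $\om_{2x}\gtrsim\omg(1-\tfrac{1}{2x})$ you then adjoin is the \emph{same} estimate a second time. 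Two lower bounds for $\om_{2x}$ cannot yield $\om_{2x}\asymp\omg(1-\tfrac{1}{2x})$: the direction that actually characterizes $\DD$ in Lemma~\ref{caract. pesos doblantes}(iii) is $\om_{2x}\lesssim\omg(1-\tfrac{1}{2x})$, and nothing in your argument addresses it. The obstruction is structural, not cosmetic: \eqref{eq: desig caracterizacion momentos} is an \emph{upper} bound for $(\om/\nug)_x$, so to convert it into the needed control $\om_x\lesssim\om_{2x}$ you must produce a lower bound for $(\om/\nug)_x$ that sees the whole moment $\om_x$; truncating the integral to $[1-\tfrac{1}{2x},1]$ discards exactly the part of the mass (on $[0,1-\tfrac{1}{2x}]$, where $1/\nug$ is small) whose control is the content of $\om\in\DD$.

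For contrast, the paper extracts the nontrivial content of \eqref{eq: desig caracterizacion momentos} by pairing $\om/\nug$ against $\om\nug$: it first deduces $\nu_x\,\om_x\lesssim(\om\nug)_{2x}$, and then the elementary Fubini estimate
\begin{equation*}
(\om\nug)_{2x}=\int_0^1\om(s)s^{2x}\int_s^1\nu(t)\,dt\,ds\le\int_0^1\om(s)s^{3x/2}\int_s^1 t^{x/2}\nu(t)\,dt\,ds\le\om_{3x/2}\,\nu_{x/2}\lesssim\om_{3x/2}\,\nu_x
\end{equation*}
(the last step by $\nu\in\DD$ and Lemma~\ref{caract. pesos doblantes}(iv)) gives $\om_x\lesssim\om_{3x/2}$, which iterates to $\om_x\lesssim\om_{2x}$, i.e.\ $\om\in\DD$. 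The Cauchy--Schwarz idea $\om_{2x}^2\le(\om\nug)_{2x}(\om/\nug)_{2x}$ that you reserve for the lower bound in Step~2 is essentially the right mechanism, but it must be deployed \emph{before} you may assume $\om\in\DD$, not after. Finally, once $\om\in\DD$ is known, your dyadic decomposition for the upper bound on $(\om\nug)_x$ is unnecessary: the one-line estimate $(\om\nug)_x\le\om_{x/2}\nu_{x/2}\lesssim\om_x\nu_x$ used in the paper already suffices.
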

\begin{proof}
	Let $x\geq 1$.  By applying Lemma~\ref{caract. pesos doblantes} (iv) and \eqref{eq: desig caracterizacion momentos}
	\begin{equation}
		\begin{split}
			\om_x &\lesssim \frac{1}{\nu_x}\int_0^1\om(s) \nug(s)s^{2x} ds = \frac{1}{\nu_x}\int_0^1\om(s) s^{2x}\int_s^1 \nu(t)dt \, ds \\
			&\leq \frac{1}{\nu_x}\int_0^1\om(s) s^{\frac{3x}{2}}\int_s^1 t^{\frac{x}{2}}\nu(t)dt \, ds \leq \om_{\frac{3x}{2}} \frac{\nu_{\frac{x}{2}}}{\nu_x} \lesssim \om_{\frac{3x}{2}},
\quad  x\ge 1.
		\end{split}
	\end{equation}
Hence, $ \om_x\lesssim   \om_{\frac{3x}{2}}$. So,  iteration of this inequality yields  $ \om_x\lesssim   \om_{\frac{3x}{2}}\lesssim   \om_{\frac{9x}{4}}\le \om_{2x}$. Therefore, 
	by Lemma~\ref{caract. pesos doblantes} (iv), $\om\in \DD$. \\
	Now, a similar argument, the fact  that $\om,\nu\in \DD$ and \eqref{eq: desig caracterizacion momentos} imply
	$$
	(\om\nug)_x \leq \om_{\frac{x}{2}}\nu_{\frac{x}{2}} \lesssim \om_x\nu_x \lesssim (\om\nug)_{2x}, \quad x \geq 1,
	$$
	so $\om\nug\in \DD$ by Lemma~\ref{caract. pesos doblantes} (iv).
\end{proof}

\begin{proposition}\label{prop: necesidad Hinfty nu en Dcheck}
	Let $\nu$ be a radial weight and $\om\in\DD$ such that $P_\om:L^\infty_{\nug} \to H^\infty_{\nug}$ is bounded.  Then, $\nu \in \Dd$.
\end{proposition}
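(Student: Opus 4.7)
The plan is to combine Proposition~\ref{prop: norma Hinf} with Hardy's inequality to upgrade the necessary condition of Proposition~\ref{prop: condicion necesaria} from a single-moment inequality to a full series inequality, and then to extract from it precisely the integral characterization of $\Dd$ given by Lemma~\ref{caract. D check}(iii) with $\gamma=1$.

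As a preliminary step I would first deduce that $\nu\in\DD$. Combining the upper estimate of Proposition~\ref{prop: condicion necesaria} with the straightforward lower bound
\[
\left(\frac{\om}{\nug}\right)_x \ge \int_{1-1/(2x)}^1 s^x\,\frac{\om(s)}{\nug(s)}\,ds \gtrsim \frac{\omg(1-1/(2x))}{\nug(1-1/(2x))},
\]
and using $\om\in\DD$ together with Lemma~\ref{caract. pesos doblantes}(iii), which gives $\om_{2x}\asymp\omg(1-1/(2x))$, one obtains $\nug(1-1/x)\lesssim\nug(1-1/(2x))$. Setting $x=1/(1-r)$ for arbitrary $r\in(0,1)$ yields $\nug(r)\lesssim\nug((1+r)/2)$, i.e.\ $\nu\in\DD$.

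For the core step I would apply Hardy's inequality to $B^\om_a$ exactly as in the remark following Proposition~\ref{prop: norma Hinf}, so that the boundedness condition $\sup_{a\in\D}\nug(a)\int_\D|B^\om_a(\z)|\om(\z)/\nug(\z)\,dA(\z)<\infty$ yields
\[
\nug(a)\sum_{n\ge 1}\frac{|a|^n\left(\frac{\om}{\nug}\right)_{n+1}}{(n+1)\,\om_{2n+1}}\lesssim 1, \quad a\in\D.
\]
The same subinterval lower bound applied with $x=n+1$, combined with $\om\in\DD$, gives $\left(\frac{\om}{\nug}\right)_{n+1}/\om_{2n+1}\gtrsim 1/\nug(1-1/(n+1))$. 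Specializing $|a|=1-2^{-K}$, so that $|a|^n\gtrsim 1$ uniformly for $n\le 2^K$, and grouping the sum dyadically (where the block $2^{k-1}\le n<2^k$ contributes $\asymp 1/\nug(1-2^{-k})$ by the already established $\nu\in\DD$), one arrives at the discrete estimate
\[
\nug(1-2^{-K})\sum_{k=1}^{K}\frac{1}{\nug(1-2^{-k})}\lesssim 1,\quad K\ge 1.
\]

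Finally, I would translate this into the integral condition of Lemma~\ref{caract. D check}(iii). A dyadic decomposition of $[0,r]$ shows that on each ring $[1-2^{-k},1-2^{-k-1}]$ one has $1/(1-s)\asymp 2^k$ and, by $\nu\in\DD$, $\nug(s)\asymp\nug(1-2^{-k})$, so $\int_0^r \frac{ds}{(1-s)\nug(s)}\asymp\sum_k 1/\nug(1-2^{-k})$, the sum running over the dyadic indices with $1-2^{-k}\le r$. The discrete estimate then gives $\int_0^r \frac{ds}{(1-s)\nug(s)}\lesssim 1/\nug(r)$, which is Lemma~\ref{caract. D check}(iii) with $\gamma=1$, so $\nu\in\Dd$. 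The main obstacle is producing the summed Hardy version of Proposition~\ref{prop: condicion necesaria} and simplifying the $\om$-moments out of it via $\om\in\DD$; once that is in hand, the dyadic bookkeeping to recover the integral characterization of $\Dd$ is routine.
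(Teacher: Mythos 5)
Your proof is correct, but it follows a genuinely different route from the paper's. The paper's argument applies \cite[Theorem~1]{PelRatproj} and Fubini to rewrite $\int_\D|B^\om_z(\z)|\frac{\om(\z)}{\nug(\z)}\,dA(\z)$ as $\int_0^{|z|}\frac{1}{\omg(t)(1-t)}\int_{t/|z|}^1\frac{\om(s)}{\nug(s)}\,ds\,dt$, lower-bounds the inner integral by $\omg(t/|z|)/\nug(t/|z|)$, and uses Lemma~\ref{caract. pesos doblantes}(ii) to arrive at $\int_0^{r}\frac{dt}{\nug(t)(1-t)}\le C/\nug\bigl(\tfrac{1+r}{2}\bigr)$; since the upper bound involves $\nug\bigl(\tfrac{1+r}{2}\bigr)$ rather than $\nug(r)$, the paper cannot quote Lemma~\ref{caract. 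D check}(iii) directly and instead extracts $\nu\in\Dd$ from the definition via the logarithmic divergence of $\int_r^s\frac{dt}{1-t}$ and a suitable choice of $K$. You avoid the kernel estimate of \cite{PelRatproj} altogether: you work at the level of Taylor coefficients via Hardy's inequality (the same device the paper uses in Proposition~\ref{prop: condicion necesaria} and in the remark after Proposition~\ref{prop: norma Hinf}), first extracting $\nu\in\DD$ from the moment condition, then summing the coefficient bounds dyadically to reach $\nug(1-2^{-K})\sum_{k\le K}\nug(1-2^{-k})^{-1}\lesssim 1$, which (because your bound carries $\nug(r)$ rather than $\nug\bigl(\tfrac{1+r}{2}\bigr)$) lets you invoke Lemma~\ref{caract. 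D check}(iii) with $\gamma=1$ directly. Each step checks out: the lower bound $(\om/\nug)_x\gtrsim\omg(1-\tfrac{1}{2x})/\nug(1-\tfrac{1}{2x})$ is the standard tail estimate, $\om_{2x}\asymp\omg(1-\tfrac{1}{2x})$ is legitimate since $\om\in\DD$ is hypothesized, $(1-2^{-K})^{2^K}\ge e^{-1}$ justifies the truncation at $n\le 2^K$, and the ring-by-ring comparison in the last step uses only the already-established $\nu\in\DD$. Your approach is more elementary (it bypasses the two-weight kernel estimate, which is the deepest external input of the paper's proof) at the cost of the dyadic bookkeeping and the preliminary derivation of $\nu\in\DD$, which the paper's proof does not need at this stage; the paper's route is shorter given that \cite[Theorem~1]{PelRatproj} is used elsewhere in the article anyway.
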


\begin{proof}
	By Proposition~\ref{prop: norma Hinf}, there exists a constant $C>0$ such that
	$$
	\int_\D \abs{B^\om_z(\z)}\frac{\om(\z)}{\nug(\z)}dA(\z) \leq \frac{C}{\nug(z)}, \quad z \in \D.
	$$
	Now, by \cite[Theorem 1]{PelRatproj} and Fubini's theorem, for $\frac{1}{2}\leq \abs{z} < 1$
	$$
	\int_\D \abs{B^\om_z(\z)}\frac{\om(\z)}{\nug(\z)}dA(\z) \asymp \int_0^1\frac{\om(s)}{\nug(s)}\int_0^{\abs{z}s}\frac{dt}{\omg(t)(1-t)}ds = \int_0^{\abs{z}}\frac{1}{\omg(t)(1-t)}\int_{\frac{t}{\abs{z}}}^1\frac{\om(s)}{\nug(s)}ds\;dt,
	$$
Applying Lemma~\ref{caract. pesos doblantes} (ii), we obtain that 
	\begin{equation*}
		\begin{split}
			\int_\D \abs{B^\om_z(\z)}\frac{\om(\z)}{\nug(\z)}dA(\z) &\gtrsim \int_0^{\abs{z}}\frac{\omg(\frac{t}{\abs{z}})}{\omg(t)} \cdot \frac{dt}{\nug(\frac{t}{\abs{z}})(1-t)} 
			\geq \int_0^{2\abs{z}-1}\frac{\omg(\frac{t}{\abs{z}})}{\omg(t)} \cdot \frac{dt}{\nug(t)(1-t)} \\
			&\gtrsim \int_0^{2\abs{z}-1} \left ( \frac{1-\frac{t}{\abs{z}}}{1-t} \right )^\b \cdot \frac{dt}{\nug(t)(1-t)} \\
			&\geq \int_0^{2\abs{z}-1} \left ( \frac{1-\frac{2t}{1+t}}{1-t} \right )^\b \cdot \frac{dt}{\nug(t)(1-t)} \\
			&\asymp \int_0^{2\abs{z}-1} \frac{dt}{\nug(t)(1-t)}, \quad \abs{z} \geq \frac{1}{2}.
		\end{split}
	\end{equation*}
	 Then, we take   $|z| = \frac{1+r}{2}$ and observe that there is $C=C(\omega,\nu)>0$ such that
	$$
	\int_0^{r} \frac{dt}{\nug(t)(1-t)} \le \frac{C}{\nug \left ( \frac{1+r}{2} \right )}, \quad 0\leq r < 1.
	$$
	Therefore if $0\leq r \leq s <1$, then
	$$
	\frac{1}{\nug(r)} \log\frac{1-r}{1-s} = \frac{1}{\nug(r)}\int_r^s\frac{dt}{1-t} \leq \int_r^s\frac{dt}{\nug(t)(1-t)} \leq \frac{C}{\nug\left ( \frac{1+s}{2}\right )}.
	$$
Finally, take $K>2$  such that $\frac{\log\frac{K}{2}}{C}>1$ and choose $s=1-\frac{2}{K}(1-r)>r$. Then,
$\frac{1+s}{2} = 1-\frac{1-r}{K}$, and
	$$
	\nug(r)\geq \frac{\log\frac{K}{2}}{C}\nug\left (1-\frac{1-r}{K} \right ), \quad 0\le r<1.
	$$
	 This shows that   $\nu \in \Dd$ and finishes the proof.
\end{proof}

\begin{Prf}{\em{Theorem~\ref{th:caracterizacion nu Dgorro}.}}
We will prove (i)$\Rightarrow$(ii)$\Leftrightarrow$(iii)$\Leftrightarrow$(iv) and finally (iii)$\Rightarrow$(i).

Assume that  $P_\om:L^\infty_{\nug} \to H^\infty_{\nug}$ is bounded. Then,  \eqref{eq: desig caracterizacion momentos} holds by
	 Proposition~\ref{prop: condicion necesaria} and  Lemma~\ref{caract. pesos doblantes} (iii).  Next,  Proposition~\ref{prop: necesidad dgorro} implies that 
 $\om \in \DD$ and therefore
 $\nu \in \Dd$ by Proposition~\ref{prop: necesidad Hinfty nu en Dcheck}, so we get (ii).
Now, if (ii) holds, then Proposition~\ref{prop: necesidad dgorro} implies that 
 $\om \in \DD$. So, \eqref{eq: desig caracterizacion gorros} follows from \eqref{eq: desig caracterizacion momentos} and  Lemma~\ref{caract. pesos doblantes} (iii). That is,  (iii) is satisfied. 
Reciprocally, if (iii) holds, then 
  $\frac{\om}{\nug}\in \DD$  by Lemma~\ref{caract creciente},  and \eqref{eq: desig caracterizacion momentos} follows from 
 \eqref{eq: desig caracterizacion gorros} and Lemma~\ref{caract. pesos doblantes} (iii). Therefore we get (ii).
Now let us prove (iii)$\Rightarrow$(iv).
Firstly,   $\frac{\om}{\nug}\in \DD$  by Lemma~\ref{caract creciente}.   In order to prove that $\frac{\om}{\nug}\in \Dd$ and to make the notation easier, we denote $\mu = \frac{\om}{\nug}$. Then, \eqref{eq: desig caracterizacion gorros} is equivalent to the existence of $C>1$ such that  
	\begin{equation}\label{eq: caract con muuu}
	\mug(r) \leq C \frac{\omg(r)}{\nug(r)}, \quad 0\leq r <1.
	\end{equation}
	On the other hand,  an integration by parts yields
	$$
	\omg(r) = \nug(r)\mug(r) - \int_r^1 \nu(s)\mug(s) ds, \quad 0\leq r <1,
	$$
	so \eqref{eq: caract con muuu} is equivalent to the existence of $C\in (0,1)$ such that
	\begin{equation}\label{desig2}
		\frac{\int_r^1\nu(s)\mug(s)ds}{\nug(r)}\leq C\mug(r), \quad 0\leq r <1.
	\end{equation}
	%with $0<C_1=\frac{C-1}{C}<1$.
	By Lemma~\ref{caract. D check} (ii) there exists $C_2>0$ and $\a>0$ such that for every $K>1$
	$$
	\frac{\nug\left (1 - \frac{1-r}{K} \right )}{\nug(r)} \leq \frac{C_2}{K^\a},\quad 0\leq r < 1.
	$$
	Therefore, taking $K$ such that $\frac{C_2}{K^\a} < 1-C$,  we obtain that
	\begin{equation*}
		\begin{split}
			C\mug(r) &\geq  \frac{\int_r^1\nu(s)\mug(s)ds}{\nug(r)} \geq  \frac{\int_r^{1-\frac{1-r}{K}}\nu(s)\mug(s)ds}{\nug(r)} \geq \mug\left (1-\frac{1-r}{K} \right )\frac{\nug(r)-\nug\left ( 1-\frac{1-r}{K}\right )}{\nug(r)} \\
			&= \mug\left (1-\frac{1-r}{K} \right )\left ( 1-\frac{\nug\left ( 1-\frac{1-r}{K}\right )}{\nug(r)} \right )\geq \mug\left (1-\frac{1-r}{K} \right )\left (1-\frac{C_2}{K^\a} \right ),\quad 0\leq r <1.
		\end{split}
	\end{equation*}
Consequently,  $\mu\in \Dd$ and (iv) holds.
Conversely, if (iv) holds, we get $\om\in\DD$ by Lemma~\ref{prop: producto en D}. Moreover, 
since $\frac{\om}{\nug}\in \Dd$, Lemma~\ref{caract. D check}(iv) implies that 
there are $K>1$ and $C>0$ such that
	$$
	\int_r^1\frac{\om(s)}{\nug(s)}ds \leq C \int_r^{1-\frac{1-r}{K}}\frac{\om(s)}{\nug(s)}ds, \quad 0\leq r < 1. 
	$$
This together with Lemma~\ref{caract. pesos doblantes}(ii) give
	\begin{equation*}
		\begin{split}
			\omg(r)&=\int_r^1 \om(s) ds = \int_r^1 \frac{\om(s)}{\nug(s)}\nug(s)ds \geq \int_r^{1-\frac{1-r}{K}}\frac{\om(s)}{\nug(s)}\nug(s)ds \\
			&\geq \nug\left ( 1-\frac{1-r}{K}\right )\int_r^{1-\frac{1-r}{K}}\frac{\om(s)}{\nug(s)}ds \gtrsim \nug(r)\int_r^1 \frac{\om(s)}{\nug(s)}ds, \quad 0\leq r<1,
		\end{split}
	\end{equation*}
which yields (iii). \\

Finally assume that (iii) holds and let us prove that $P_\om:L^\infty_{\nug}\to H^\infty_{\nug}$ is bounded. By Proposition~\ref{prop: norma Hinf}, it is sufficient to prove that
	$$
	\sup_{a\in\D, \abs{a}\geq \frac{1}{2}. }\nug(a)\int_\D \abs{B^\om_a(z)}\frac{\om(z)}{\nug(z)} dA(z) <\infty.
	$$
	By \cite[Theorem 1]{PelRatproj} and Fubini's Theorem, this is equivalent
	with proving that there is $C>0$ such that
	$$
	\nug(a)\int_0^{\abs{a}}\frac{1}{\omg(t)(1-t)} \int_{\frac{t}{\abs{a}}}^1\frac{\om(r)}{\nug(r)}dr\, dt \leq C, \quad \abs{a}\geq \frac{1}{2}.
	$$
 By applying \eqref{eq: desig caracterizacion gorros} and Lemma~\ref{caract. D check} (iii)
	\begin{equation*}
		\begin{split}
		\nug(a)\int_0^{\abs{a}}\frac{1}{\omg(t)(1-t)} \int_{\frac{t}{\abs{a}}}^1\frac{\om(r)}{\nug(r)}dr\, dt &
\le \nug(a)\int_0^{\abs{a}}\frac{1}{\omg(t)(1-t)} \int_{t}^1\frac{\om(r)}{\nug(r)}dr\, dt 
\\ & \lesssim
% \nug(a)\int_0^{\abs{a}}\frac{dt}{\omg(t)(1-t)}\cdot \frac{\omg\left ( \frac{t}{\abs{a}}\right )}{\nug(t)}dt \\
	%	&\leq 
\nug(a)\int_0^{\abs{a}}\frac{dt}{\nug(t)(1-t)} dt \lesssim 1,
		\end{split}
	\end{equation*}
	so $P_\om:L^\infty_{\nug}\to B^\infty_{\nug}$ is bounded. This finishes the proof.
\end{Prf} \\

\begin{Prf}{\em{ Corollary~\ref{co: proyeccion no acotada pesos}.}}	
If  $P_\om:L^\infty_{\nug}\to H^\infty_{\nug}$ is bounded then $\nu\in\Dd$ by Theorem~\ref{th:caracterizacion nu Dgorro}. Reciprocally, 
if $\nu\in\DDD$
 by Lemma~\ref{caract. pesos doblantes}(ii), there exist $C,\a>0$  such that
	$$
	\frac{1}{\nug(s)}\leq C \left (\frac{1-r}{1-s} \right )^\a\frac{1}{\nug(r)}, \quad 0\leq r \leq s < 1.
	$$
Therefore, if $\om(z)=(1-|z|)^\g$ with
	$\g> \a-1$, then
	$$
	\int_r^1\frac{\om(s)}{\nug(s)}ds \lesssim  \frac{(1-r)^\a}{\nug(r)}\int_r^1 (1-s)^{\g-\a}ds \asymp \frac{(1-r)^{\g+1}}{\nug(r)} \asymp \frac{\omg(r)}{\nug(r)},
	$$
that is  \eqref{eq: desig caracterizacion gorros} holds.
	Then,  $P_\om$ is bounded from $L^\infty_{\nug}$ to $H^\infty_{\nug}$ by Theorem~\ref{th:caracterizacion nu Dgorro}. This finishes the proof.
\end{Prf}

%\subsection{Study of the case $\nu\notin \DD$}
%\textcolor{red}{PROJECT: Obtaining some results for exponential type weights or a larger %class of weights, such as Hv-Lv-Schuster weights. }

\subsection{Boundedness of $P_\om:L^\infty_{\nug}\to H^\infty_{\nug}$ assuming conditions on $\omega$.}  \label{sec4.2}
%%\subsection{Boundedness of $P_\om:L^\infty_{\nug} \to H^\infty_{\nug}$ 
%%for $\om \in \DD$}

This section is dedicated to the proof of Theorem~\ref{th:caracterizacion omg dgorro} and Corollary~\ref{co: corolario caracterizacion omg dgorro} among other results.
\medskip

\begin{Prf}{\em{Theorem~\ref{th:caracterizacion omg dgorro}.}} 
We will prove (i)$\Leftrightarrow$(iii)$\Leftrightarrow$(ii) and (iii)$\Leftrightarrow$(iv).

If (i) holds, then $\nu \in \Dd$ by Proposition~\ref{prop: necesidad Hinfty nu en Dcheck} and
	\eqref{eq: condicion momentos} holds  by Proposition~\ref{prop: condicion necesaria}. So, using that $\om\in\DD$ and Lemma~\ref{caract. pesos doblantes}, we get \eqref{eq: desig caracterizacion gorros}. Hence, (iii) holds. Reciprocally, if (iii) holds, using
	% Lemma~\ref{caract creciente} $\frac{\om}{\nug}\in \DD$ which together with 
\eqref{eq: desig caracterizacion gorros} and the fact that $\om\in\DD$
	$$
	\nug(r) \lesssim \frac{\omg(r)}{\int_r^1\frac{\om(s)\,ds}{\nug(s)}} \lesssim\frac{\omg\left (\frac{1+r}{2} \right )}{\int_{\frac{1+r}{2}}^1\frac{\om(s)\,ds}{\nug(s)}}  \leq \nug\left ( \frac{1+r}{2}\right ), \quad 0<r<1.
	$$
That is, $\nu\in\DD$. So, (i) follows from
 Theorem~\ref{th:caracterizacion nu Dgorro}. Next, assuming  again that (iii) holds
 we have  $\nu\in\DD$.  Moreover,     $\frac{\om}{\nug}\in \DD$ by Lemma~\ref{caract creciente}(i).
	 Therefore \eqref{eq: desig caracterizacion momentos}  follows from   \eqref{eq: desig caracterizacion gorros}  and Lemma~\ref{caract. pesos doblantes}. So (ii) is satisfied. 
Reciprocally, if (ii) holds,  using that $\om\in\DD$,  Lemma~\ref{caract. pesos doblantes} and  \eqref{eq: desig caracterizacion momentos}, we get \eqref{eq: desig caracterizacion gorros} 
and thus also (iii).  Finally, if (iii) holds, arguing as above yields $\nu\in\DD$, 
and so  $\frac{\om}{\nug}\in \DDD$ by  Theorem~\ref{th:caracterizacion nu Dgorro}. Hence, (iv) holds.
 On the other hand, (iv)$\Rightarrow$(iii) follows from Theorem~\ref{th:caracterizacion nu Dgorro}.
This finishes the proof.
\end{Prf} \\

\begin{Prf}{\em{Corollary~\ref{co: corolario caracterizacion omg dgorro}.}}
Let  $\om\in\DD$.
If  $P_\om:L^\infty_{\nug}\to H^\infty_{\nug}$ is bounded, then  Theorem~\ref{th:caracterizacion omg dgorro} yields $\frac{\om}{\nug}\in\DDD$ and $\nu\in\DDD$,
hence, by Lemma~\ref{prop: producto en D}, $\om\in\DDD$. 
On the other hand, if $\om\in\DDD$, then, by Lemma~\ref{caract. pesos doblantes} (ii), 
there exists $0<\b$ such that
	$$
	\left ( \frac{1-r}{1-s}\right )^\b\omg(s) \lesssim\omg(r).
\quad 0\leq r \leq s<1.
	$$
So, in particular $\omg(s)\lesssim (1-s)^\b,\, 0\le s<1$. 
Now, take $0<\gamma < \b$ and consider the standard weight $\nu(r)=(1-r)^{\gamma-1}$.
We have $\nug(r)\asymp (1-r)^\gamma$ and 

	\begin{equation*}
		\begin{split}
			\int_r^1 \frac{\om(s)}{(1-s)^\gamma} &\lesssim \frac{\omg(r)}{(1-r)^\gamma} +  \int_r^1 \frac{\gamma\,\omg(s)}{(1-s)^{\gamma+1}}ds
\\ & \lesssim  \frac{\omg(r)}{(1-r)^\gamma} +
\frac{\omg(r)}{(1-r)^\b} \int_0^1 (1-s)^{\b-\gamma-1}ds 
			\lesssim \frac{\omg(r)}{(1-r)^\gamma}, \quad 0\leq r < 1.
		\end{split}
	\end{equation*}
Consequently,  by Theorem~\ref{th:caracterizacion omg dgorro}, $P_\om$ is bounded from 
$L^\infty_{\nug}$ to $H^\infty_{\nug}$.
\end{Prf}

\medskip

We finish this section proving that both weights,  $\om$ and $\nu$, have to be doubling if  the operator $P_\om : L^\infty_{\nug}\to H^\infty_{\nug}$ 
is bounded and  one of them is assumed to be upper doublng.
\begin{corollary}\label{cor: todo en D}
Let $\om$ and $\nu$ be radial weights such that  $P_\om : L^\infty_{\nug}\to H^\infty_{\nug}$ 
is bounded and  $\om\in\DD$ or $\nu\in\DD$. Then, both weights  $\om,\nu \in \DDD$.
\end{corollary}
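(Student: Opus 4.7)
The plan is to derive Corollary \ref{cor: todo en D} by combining the characterizations already established in Theorems~\ref{th:caracterizacion nu Dgorro} and~\ref{th:caracterizacion omg dgorro} with the product Lemma~\ref{prop: producto en D}. The strategy is to handle the two hypotheses ($\omega\in\widehat{\mathcal{D}}$ or $\nu\in\widehat{\mathcal{D}}$) separately, extract the conclusion ``$\frac{\omega}{\widehat{\nu}}\in \mathcal{D}$ and $\nu\in\mathcal{D}$'' in both cases, and then recover $\omega\in\mathcal{D}$ from the product relation $\omega = \frac{\omega}{\widehat{\nu}}\,\widehat{\nu}$.

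\medskip

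\noindent\textbf{Case $\omega\in\widehat{\mathcal{D}}$.} Here the ambient assumption matches the hypothesis of Theorem~\ref{th:caracterizacion omg dgorro}, so the boundedness of $P_\omega:L^\infty_{\widehat{\nu}}\to H^\infty_{\widehat{\nu}}$ is equivalent to its item~(iv), namely $\frac{\omega}{\widehat{\nu}}\in\mathcal{D}$ and $\nu\in\mathcal{D}$. In particular $\nu\in\mathcal{D}$ is immediate. To deduce $\omega\in\mathcal{D}$, I would apply Lemma~\ref{prop: producto en D} with $\frac{\omega}{\widehat{\nu}}\in\mathcal{D}$ playing the role of $\omega$ and $\nu\in\mathcal{D}$ playing its own role; this gives $\frac{\omega}{\widehat{\nu}}\cdot\widehat{\nu}=\omega\in\mathcal{D}$.

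\medskip

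\noindent\textbf{Case $\nu\in\widehat{\mathcal{D}}$.} Now the hypothesis matches Theorem~\ref{th:caracterizacion nu Dgorro}, whose item (iv) asserts $\frac{\omega}{\widehat{\nu}}\in\mathcal{D}$ and $\nu\in\widecheck{\mathcal{D}}$. Combining $\nu\in\widehat{\mathcal{D}}$ with $\nu\in\widecheck{\mathcal{D}}$ gives $\nu\in\mathcal{D}$, and then the same application of Lemma~\ref{prop: producto en D} to $\frac{\omega}{\widehat{\nu}}$ and $\nu$ yields $\omega\in\mathcal{D}$.

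\medskip

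There is essentially no obstacle here; the corollary is a clean bookkeeping consequence of the two characterization theorems together with the product stability of $\mathcal{D}$. The only point to be careful about is to invoke Theorem~\ref{th:caracterizacion omg dgorro} in the first case and Theorem~\ref{th:caracterizacion nu Dgorro} in the second, since the former is the one that already provides $\nu\in\mathcal{D}$ directly (not just $\nu\in\widecheck{\mathcal{D}}$), whereas in the second case the upgrade from $\widecheck{\mathcal{D}}$ to $\mathcal{D}$ requires the extra assumption $\nu\in\widehat{\mathcal{D}}$ that is built into the hypothesis.
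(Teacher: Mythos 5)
Your argument is correct and follows essentially the same route as the paper: both cases reduce to item (iv) of Theorems~\ref{th:caracterizacion omg dgorro} and~\ref{th:caracterizacion nu Dgorro} respectively, and then recover $\om\in\DDD$ from $\frac{\om}{\nug}\in\DDD$ and $\nu\in\DDD$ via Lemma~\ref{prop: producto en D}. The only cosmetic difference is that in the case $\om\in\DD$ the paper cites Corollary~\ref{co: corolario caracterizacion omg dgorro} for $\om\in\DDD$, whose internal proof is precisely the Theorem~\ref{th:caracterizacion omg dgorro}(iv) plus Lemma~\ref{prop: producto en D} argument you write out.
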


\begin{proof}
Assume that $P_\om:L^\infty_{\nug}\to H^\infty_{\nug}$ is bounded and   $\om\in\DD$, then $\om\in\DDD$ by Corollary~\ref{co: corolario caracterizacion omg dgorro} and 
$\nu\in\DDD$ by Theorem~\ref{th:caracterizacion omg dgorro}. 
On the other hand, if  $P_\om:L^\infty_{\nug}\to H^\infty_{\nug}$ is bounded and   $\nu\in\DD$, then $\nu\in\DDD$ and
$\frac{\om}{\nug}\in\DDD$  by Theorem~\ref{th:caracterizacion nu Dgorro},
so by Lemma~\ref{prop: producto en D} $\om\in\DDD$.  This finishes the proof.
\end{proof}

\subsection{Boundedness of $P_\om:L^\infty_{\nug}\to \B^\infty_{\nug}$ .}
\label{sec4.3}

In this section, we will prove Theorem~\ref{th:caracterizacion B nu Dgorro} and Theorem~\ref{th:caracterizacion B omg dgorro}  among other results.

Firstly, by mimicking the proof of Proposition~\ref{prop: norma Hinf} we get the analogous result for the boundedness of 
 $P_\om:L^\infty_{\nug}\to \B^\infty_{\nug}$ .
\begin{proposition}\label{prop: norma Binf}
	Let $\om$ and $\nu$ be radial weights.  Then,   $P_\om:L^\infty_{\nug} \to \B^\infty_{\nug}$ is bounded if and only if 
	$$\sup\limits_{z\in \D} (1-\abs{z})\nug(z)\int_\D \abs{(B^\om_\z)'(z)}\frac{\om(\z)}{\nug(\z)}dA(\z)<\infty.$$ 
	
\end{proposition}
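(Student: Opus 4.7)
The plan is to mimic the proof of Proposition~\ref{prop: norma Hinf}, replacing the role of the reproducing kernel $B^\om_z$ by its derivative with respect to the free variable. The starting observation is the representation $P_\om f(z) = \int_\D f(\z)\,\overline{B^\om_z(\z)}\,\om(\z)\,dA(\z)$ together with the Hermitian symmetry $\overline{B^\om_z(\z)} = B^\om_\z(z)$, which lets us rewrite
$$P_\om f(z) = \int_\D f(\z)\, B^\om_\z(z)\, \om(\z)\, dA(\z).$$
Since the series representation of $B^\om_\z$ in the variable $z$ converges locally uniformly in $\D$, the map $z\mapsto B^\om_\z(z)$ is analytic, and differentiation under the integral sign yields
$$(P_\om f)'(z) = \int_\D f(\z)\, (B^\om_\z)'(z)\, \om(\z)\, dA(\z), \qquad z\in\D.$$

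The easy direction is immediate: if the integral condition in the statement is finite, then for any $f\in L^\infty_{\nug}$ with $\|f\|_{L^\infty_{\nug}}\le 1$, using $|f(\z)|\le 1/\nug(\z)$ gives
$$(1-|z|)\nug(z)\,|(P_\om f)'(z)| \le (1-|z|)\nug(z)\int_\D |(B^\om_\z)'(z)|\,\frac{\om(\z)}{\nug(\z)}\,dA(\z),$$
and taking the supremum in $z$ shows that $P_\om: L^\infty_{\nug}\to \B^\infty_{\nug}$ is bounded.

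For the converse, assume $P_\om: L^\infty_{\nug}\to \B^\infty_{\nug}$ is bounded. Fix $z\in\D$ and, in analogy with the test function used in Proposition~\ref{prop: norma Hinf}, consider
$$f_z(\z) = \begin{cases} \dfrac{\overline{(B^\om_\z)'(z)}}{\nug(\z)\,|(B^\om_\z)'(z)|} & \text{if } (B^\om_\z)'(z)\neq 0, \\ 0 & \text{otherwise.} \end{cases}$$
Clearly $\|f_z\|_{L^\infty_{\nug}} \le 1$, and plugging $f_z$ into the displayed formula for $(P_\om f)'(z)$ unwinds the phase, giving
$$(P_\om f_z)'(z) = \int_\D |(B^\om_\z)'(z)|\,\frac{\om(\z)}{\nug(\z)}\,dA(\z).$$
Since this quantity is real and nonnegative, bounding it by the Bloch-type seminorm of $P_\om f_z$ at the same point $z$ yields
$$(1-|z|)\nug(z)\int_\D |(B^\om_\z)'(z)|\,\frac{\om(\z)}{\nug(\z)}\,dA(\z) \le \|P_\om f_z\|_{\B^\infty_{\nug}} \le \|P_\om\|,$$
and taking the supremum in $z\in\D$ completes the proof.

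The only nonroutine point is the justification of differentiation under the integral sign when $f\in L^\infty_{\nug}$, which follows from the locally uniform convergence on $\D$ of the monomial series expansion of $B^\om_\z(z)$ in $z$ combined with the standing boundedness hypothesis (or the easy direction already handled), so I would treat it as a short remark rather than the main obstacle. The measurability of $f_z$ is also immediate from the joint continuity of $(z,\z)\mapsto (B^\om_\z)'(z)$ on $\D\times\D$.
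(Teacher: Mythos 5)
Your proposal is correct and is exactly the argument the paper intends: the paper gives no separate proof of Proposition~\ref{prop: norma Binf}, stating only that it follows ``by mimicking the proof of Proposition~\ref{prop: norma Hinf}'', and your test functions $f_z$ unwinding the phase of $(B^\om_\z)'(z)$ are the direct analogue of the paper's $f_a$ there. The integrability/differentiation-under-the-integral caveats you flag are glossed over at the same level in the paper's own treatment, so nothing is missing relative to the source.
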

As a byproduct of Proposition~\ref{prop: norma Binf} we get the following.
\begin{proposition}\label{condi nec 2}
	Let $\om$ and $\nu$ radial weights. If $P_\om:L^\infty_{\nug} \to \B^\infty_{\nug}$ is bounded, then  \eqref{eq: condicion momentos} holds.
\end{proposition}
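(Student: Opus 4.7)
The plan is to imitate the proof of Proposition~\ref{prop: condicion necesaria}, replacing the use of Proposition~\ref{prop: norma Hinf} by its Bloch analogue Proposition~\ref{prop: norma Binf}. Assuming $P_\om:L^\infty_{\nug}\to \B^\infty_{\nug}$ is bounded, Proposition~\ref{prop: norma Binf} produces a constant $C=C(\om,\nu)>0$ with
$$
\sup_{z\in\D}(1-|z|)\,\nug(z)\int_\D |(B^\om_\z)'(z)|\frac{\om(\z)}{\nug(\z)}\,dA(\z)\le C.
$$
Differentiating the series $B^\om_\z(z)=\sum_{n=0}^\infty \frac{(\bar\z z)^n}{2\om_{2n+1}}$ in $z$ gives, for $\z=se^{i\phi}$,
$$
(B^\om_{se^{i\phi}})'(z)=\sum_{n=1}^\infty \frac{n\,z^{n-1}s^n e^{-in\phi}}{2\om_{2n+1}},
$$
a Fourier series in $\phi$ whose $n$-th coefficient has modulus $\frac{n|z|^{n-1}s^n}{2\om_{2n+1}}$.

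Applying the Hausdorff--Young inequality of \cite[Theorem~6.1]{Duren} — exactly as in the proof of Proposition~\ref{prop: condicion necesaria} — bounds the mean $\frac{1}{2\pi}\int_0^{2\pi}|(B^\om_{se^{i\phi}})'(z)|\,d\phi$ from below by any single Fourier coefficient, and integrating the resulting estimate in polar coordinates against $\om(s)/\nug(s)$ yields
$$
\int_\D |(B^\om_\z)'(z)|\frac{\om(\z)}{\nug(\z)}\,dA(\z)\gtrsim \frac{n|z|^{n-1}}{\om_{2n+1}}\Bigl(\frac{\om}{\nug}\Bigr)_{n+1},\qquad n\ge 1.
$$
The key point — and the only reason the Bloch version gives the same moment inequality \eqref{eq: condicion momentos} as the Hardy one — is that choosing $|z|=1-1/n$ makes $(1-|z|)=1/n$ and $|z|^{n-1}\asymp 1$, so that the factor $n$ produced by differentiation exactly cancels the factor $(1-|z|)$ from the Bloch seminorm. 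Inserting this $z$ into the previous two displays collapses them to
$$
\nug\!\left(1-\tfrac{1}{n}\right)\frac{(\om/\nug)_{n+1}}{\om_{2n+1}}\lesssim 1,\qquad n\ge 2.
$$

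To finish, I would pass from integer $n$ to arbitrary $x\ge 1$ exactly as in the closing lines of Proposition~\ref{prop: condicion necesaria}: choose $N\in\N$ with $N\le x<N+1$, use monotonicity of the moments to write $(\om/\nug)_x\le (\om/\nug)_N$, and shift indices so that $\om_{2N}$ and $\nug(1-1/N)$ are replaced by $\om_{2x}$ and $\nug(1-1/x)$ to produce \eqref{eq: condicion momentos}. The only delicate point is the Hausdorff--Young/cancellation step above, where one has to exploit the precise balance between the derivative factor $n$ and the Bloch weight $(1-|z|)$; the rest is formally parallel to the $H^\infty$ case, so I do not expect any genuinely new obstacle.
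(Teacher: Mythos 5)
Your proposal is correct and follows essentially the same route as the paper: Proposition~\ref{prop: norma Binf} plus the coefficient estimate from \cite[Theorem~6.1]{Duren} applied to $(B^\om_\z)'(z)$, the choice $|z|=1-1/n$ so that the derivative factor $n$ cancels the Bloch factor $(1-|z|)$, and the same integer-to-real interpolation as in Proposition~\ref{prop: condicion necesaria}. The cancellation you flag as the ``delicate point'' is exactly the mechanism in the paper's proof, so there is no gap.
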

\begin{proof}
The proof follows the ideas from the proof of Proposition~\ref{prop: condicion necesaria}. We include the details for the sake of completeness. 
If  $P_\om:L^\infty_{\nug}\to\B^\infty_{\nug}$ is bounded, 
then Proposition~\ref{prop: norma Binf} and \cite[Theorem 6.1]{Duren}  yield
	\begin{equation*}
		\begin{split}
			\infty
			&>\sup_{z\in\D}(1-\abs{z})\nug(z)\int_\D\abs{(B_\z^\om)'(z)}\frac{\om(\z)}{\nug(\z)}\,dA(\z)
			\\ &\gtrsim\sup_{z\in\D, n\in \N}(1-\abs{z})\nug(z) \frac{n\int_0^1|z|^{n-1}r^{n+1}\frac{\om(r)}{\nug(r)}\,dr }{\om_{2n+1}}
\\ & \gtrsim \sup_{n\in \N}\left( 1-\frac{1}{n}\right)^{n-1}\nug\left( 1-\frac{1}{n}\right)\frac{\left(\frac{\om}{\nug}\right)_{n+1}}{\om_{2n+1}}
\\ & \gtrsim \nug\left( 1-\frac{1}{n}\right)\frac{\left(\frac{\om}{\nug}\right)_{n+1}}{\om_{2n+1}}, \quad n\ge 2.
		\end{split}
	\end{equation*}
So arguing as in the proof of Proposition~\ref{prop: condicion necesaria} one gets \eqref{eq: condicion momentos}. This finishes the proof.
\end{proof}

\begin{Prf}{\em{Theorem~\ref{th:caracterizacion B nu Dgorro}.}}
We will prove (i)$\Rightarrow$(ii)$\Rightarrow$(iii)$\Rightarrow$(i).

Assume that $P_\om:L^\infty_{\nug} \to \B^\infty_{\nug}$ is bounded. Then, Proposition~\ref{condi nec 2} together with Lemma~\ref{caract. pesos doblantes} imply 
\eqref{eq: desig caracterizacion momentos}.  Hence (ii) is satisfied. Next assuming that (ii) holds, 
Proposition~\ref{prop: necesidad dgorro} implies that $\om\in\DD$ and consequently \eqref{eq: desig caracterizacion gorros} follows from  
 \eqref{eq: desig caracterizacion momentos} and Lemma~\ref{caract. pesos doblantes}, so  we get (iii). 
Finally, if (iii) holds, then 	using that  $z(B_\z^\om)'(z)=\overline{\z(B_z^\om)'(\z)}$, 
and  \cite[Theorem~1]{PelRatproj}, we obtain
\begin{equation*}
		\begin{split}
			\int_\D \abs{(B^\om_\z)'(z)}\frac{\om(\z)}{\nug(\z)}dA(\z) &\lesssim \int_0^{\abs{z}}\frac{1}{\omg(t)(1-t)^2}\int_t^1\frac{\om(s)}{\nug(s)}ds\; dt 
			\\ &\lesssim \int_0^{\abs{z}}\frac{dt}{\nug(t)(1-t)^2} \lesssim \frac{1}{\nug(z)(1-\abs{z})}, \, \abs{z}\geq \frac{1}{2}.
		\end{split}
\end{equation*}
Consequently, (i) holds by Proposition~\ref{prop: norma Binf} and thus the proof is complete. 
\end{Prf}

\medskip

\begin{Prf}{\em{Theorem~\ref{th:caracterizacion B omg dgorro}.}}
We will prove (i)$\Rightarrow$(iiii)$\Leftrightarrow$(ii) and (iii)$\Rightarrow$(i).

Assuming that $P_\om:L^\infty_{\nug} \to \B^\infty_{\nug}$ is bounded,  
Proposition~\ref{condi nec 2} together with Lemma~\ref{caract. pesos doblantes} imply
\eqref{eq: desig caracterizacion gorros}. Next, if (iii) holds
$\frac{\om}{\nug}\in \DD$ by Lemma~\ref{caract creciente}. Moreover, using \eqref{eq: desig caracterizacion gorros} we get
	$$
	\nug(r) \lesssim \frac{\omg(r)}{\int_r^1\frac{\om(s)\,ds}{\nug(s)}} \lesssim\frac{\omg\left (\frac{1+r}{2} \right )}{\int_{\frac{1+r}{2}}^1\frac{\om(s)\,ds}{\nug(s)}}  \leq \nug\left ( \frac{1+r}{2}\right ), \quad 0<r<1.
	$$
That is, $\nu\in\DD$.  Now, bearing in mind   Lemma~\ref{caract. pesos doblantes}  we conclude that \eqref{eq: desig caracterizacion momentos} is satisfied. Reciprocally, if (ii) holds, 
\eqref{eq: desig caracterizacion gorros} follows from Lemma~\ref{caract. pesos doblantes}. Finally, if (iii) holds and $\om\in\DD$, arguing as above we get $\nu\in\DD$, so (i) follows from 
Theorem~\ref{th:caracterizacion B nu Dgorro}. This finishes the proof.
\end{Prf}

\medskip

Next, we will obtain some consequences from Theorem~\ref{th:caracterizacion B nu Dgorro} and Theorem~\ref{th:caracterizacion B omg dgorro}.

\begin{corollary}\label{cor: B_mu si y solo si  D-gorro}
	Let $\om$ and $\nu$ be radial weights such that  $P_\om : L^\infty_{\nug}\to \B^\infty_{\nug}$ is bounded. Then  $\om\in\DD$ if and only if $\nu\in\DD$.
\end{corollary}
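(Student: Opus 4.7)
The plan is to read off both implications directly from the two main theorems of this subsection, since the hypothesis ``$P_\om:L^\infty_{\nug}\to\B^\infty_{\nug}$ is bounded'' already places us in a setting where the listed equivalent conditions can be invoked as soon as one of $\om,\nu$ is known to lie in $\DD$.

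For the forward direction, I would assume $\nu\in\DD$. Then Theorem~\ref{th:caracterizacion B nu Dgorro} applies with its full hypothesis, so the boundedness of $P_\om$ is equivalent to condition (iii) of that theorem, which in particular asserts $\om\in\DD$. So nothing more has to be checked in this direction.

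For the reverse direction, I would assume $\om\in\DD$. Then Theorem~\ref{th:caracterizacion B omg dgorro} applies, and the boundedness of $P_\om$ is equivalent to \eqref{eq: desig caracterizacion gorros}. At this point I reuse the short chain of inequalities that already appears inside the proof of Theorem~\ref{th:caracterizacion B omg dgorro}, namely
\begin{equation*}
\nug(r)\lesssim\frac{\omg(r)}{\int_r^1\frac{\om(s)}{\nug(s)}\,ds}\lesssim\frac{\omg\!\left(\frac{1+r}{2}\right)}{\int_{\frac{1+r}{2}}^1\frac{\om(s)}{\nug(s)}\,ds}\le\nug\!\left(\frac{1+r}{2}\right),\quad 0<r<1,
\end{equation*}
where the first inequality uses \eqref{eq: desig caracterizacion gorros}, the second uses $\om\in\DD$, and the third is the trivial bound $\int_{(1+r)/2}^1\om(s)/\nug(s)\,ds\ge\omg((1+r)/2)/\nug((1+r)/2)$. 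This yields $\nug(r)\lesssim\nug((1+r)/2)$, which is precisely $\nu\in\DD$.

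There is no real obstacle here: both implications are essentially ``re-readings'' of the two preceding theorems, once one notes that Theorem~\ref{th:caracterizacion B nu Dgorro} uses $\nu\in\DD$ as a hypothesis while the forward step of the proof of Theorem~\ref{th:caracterizacion B omg dgorro} actually derives $\nu\in\DD$ from $\om\in\DD$ and~\eqref{eq: desig caracterizacion gorros}. The only thing to be a bit careful about is to cite the already established equivalences rather than re-verify them, and to isolate the computation displayed above as a standalone argument, since in the proof of Theorem~\ref{th:caracterizacion B omg dgorro} it is used en passant.
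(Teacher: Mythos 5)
Your proposal is correct and follows essentially the same route as the paper: the forward direction is read off from Theorem~\ref{th:caracterizacion B nu Dgorro}, and the reverse direction reuses the chain of inequalities from the proof of (iii)$\Rightarrow$(ii) in Theorem~\ref{th:caracterizacion B omg dgorro}, which is exactly what the paper cites. Your version merely makes explicit the displayed computation that the paper invokes by reference.
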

\begin{proof} 

Assume that $P_\om:L^\infty_{\nug} \to \B^\infty_{\nug}$ is bounded. If $\nu\in\DD$, then $\om\in\DD$ by Theorem~\ref{th:caracterizacion B nu Dgorro}. On the other hand, if 
$\om\in\DD$, the proof of (iii)$\Rightarrow$(ii) in Theorem~\ref{th:caracterizacion B omg dgorro} gives that $\nu\in\DD$. This finishes the proof.
\end{proof}

Let us observe that Theorem~\ref{th:caracterizacion nu Dgorro} together with Theorem~\ref{th:caracterizacion B nu Dgorro} imply that
for any radial weight $\om$ and $\nu\in \DDD$, the operators $P_{\om}: L^\infty_{\nug}\to \H^\infty_{\nug}$ and $P_{\om}: L^\infty_{\nug}\to \B^\infty_{\nug}$
are simultaneously bounded. This fact can be also deduced from the following result
 \cite[Theorems 3.2, 3.3]{luski}.
\begin{lettertheorem}
	Let $\nu$ be a radial weight. Then, the following statements holds:
	\begin{itemize} 
	\item[\rm(i)] $H^\infty_{\nug}$  is continously embedded in  $\B^\infty_{\nug}$ if and only if  $\nu\in \DD$;
	\item[\rm (ii)]  $H^\infty_{\nug} = \B^\infty_{\nug}$ with equivalence of norms if and only if $\nu \in \DDD$.
	\end{itemize}
\end{lettertheorem}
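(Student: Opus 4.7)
My plan is to establish the two parts separately, handling each direction in turn.

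For the sufficiency in (i), assuming $\nu \in \DD$, I would apply Cauchy's formula for $f'$ on the circle $|w-z|=(1-|z|)/2$. For $w$ on this circle, $|w| \le (1+|z|)/2$, hence $|f(w)| \le \nm{f}_{H^\infty_{\nug}}/\nug((1+|z|)/2)$ since $\nug$ is radially decreasing. Combining with the standard Cauchy estimate $|f'(z)| \le \tfrac{2}{1-|z|}\max_{|w-z|=(1-|z|)/2}|f(w)|$ and the $\DD$-hypothesis $\nug(|z|) \le C\,\nug((1+|z|)/2)$ gives $(1-|z|)\nug(|z|)|f'(z)| \le 2C\,\nm{f}_{H^\infty_{\nug}}$.

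For the necessity in (i), I would test the embedding on tailored functions. For $r$ close to $1$ and $s = (1+r)/2$, I aim to construct $f_r \in \H(\D)$ satisfying $\nm{f_r}_{H^\infty_{\nug}} \asymp 1$ together with $|f_r'(r)| \gtrsim 1/((1-r)\nug(s))$; the embedding inequality $(1-r)\nug(r)|f_r'(r)| \le K\,\nm{f_r}_{H^\infty_{\nug}}$ then forces $\nug(r) \lesssim \nug((1+r)/2)$, i.e., $\nu \in \DD$. A natural candidate is a normalized kernel-type function $f_r(z) = c_r\,(1-sz)^{-m}$ with $m$ large enough and $c_r$ chosen so $\nm{f_r}_{H^\infty_{\nug}} \asymp 1$; the derivative estimate $|f_r'(r)| \asymp c_r/(1-r)^{m+1}$ then yields the required bound, while a polynomial truncation of the same kernel handles weights for which $\nug$ decays very rapidly.

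For (ii), the sufficiency of $\DDD$ follows from (i) plus radial integration. Given $f \in \B^\infty_{\nug}$, the identity $f(re^{i\theta}) - f(0) = \int_0^r e^{i\theta}f'(se^{i\theta})\,ds$ and the pointwise bound $|f'(se^{i\theta})| \le \nm{f}_{\B^\infty_{\nug}}/((1-s)\nug(s))$ yield $|f(re^{i\theta})| \le |f(0)| + \nm{f}_{\B^\infty_{\nug}}\int_0^r ds/((1-s)\nug(s))$. Lemma~\ref{caract. D check}(iii) with $\gamma=1$, which is equivalent to $\nu \in \Dd$, bounds the integral by $C/\nug(r)$, so multiplying by $\nug(r)$ gives $\nm{f}_{H^\infty_{\nug}} \le \nug(0)|f(0)| + C\,\nm{f}_{\B^\infty_{\nug}}$, completing the embedding $\B^\infty_{\nug} \hookrightarrow H^\infty_{\nug}$ (the reverse embedding is (i)).

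For the necessity of $\DDD$ in (ii), equivalent norms force $\nu \in \DD$ by (i), so only $\nu \in \Dd$ remains. Arguing by contradiction, if $\nu \notin \Dd$ then the negation of Lemma~\ref{caract. D check}(iii) with $\gamma=1$ produces $r_k \to 1$ with $\nug(r_k)\int_0^{r_k} ds/((1-s)\nug(s)) \to \infty$. The main obstacle will be promoting this to an analytic witness $f \in \B^\infty_{\nug}\setminus H^\infty_{\nug}$. My strategy is a lacunary series $f(z) = \sum_{k\ge 0} b_k z^{2^k}$ with nonnegative $b_k$ calibrated so that each block $b_k z^{2^k}$ contributes, in the dyadic annulus $|z| \asymp 1-2^{-k}$, the increment of the antiderivative of $1/((1-s)\nug(s))$ across that annulus. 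Lacunarity together with $\nu \in \DD$ should keep the $\B^\infty_{\nug}$-norm bounded, since distinct blocks concentrate in nearly disjoint annuli and $\DD$ controls the overlap; meanwhile, evaluating $f$ at $z = r_k$ on the positive axis sums the earlier nonnegative terms and reproduces the divergent integral, giving $\nug(r_k)f(r_k) \to \infty$ and contradicting $\B^\infty_{\nug} \hookrightarrow H^\infty_{\nug}$.
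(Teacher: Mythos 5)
The paper does not actually prove this statement --- it is quoted as a known result of Lusky (cited as Theorems 3.2 and 3.3 of a reference) and used as a black box --- so there is no internal proof to compare against; I assess your argument on its own. Your sufficiency argument in (i) (Cauchy's estimate on the circle $\abs{w-z}=(1-\abs{z})/2$ combined with $\nug(\abs{z})\le C\nug(\tfrac{1+\abs{z}}{2})$) is correct and complete. Part (ii) is also sound in outline: radial integration plus Lemma~\ref{caract. D check}(iii) gives $\B^\infty_{\nug}\subset H^\infty_{\nug}$ when $\nu\in\Dd$, and for the converse the lacunary series $f(z)=\sum_k b_k z^{2^k}$ with $b_k\asymp 1/\nug(1-2^{-k})$ does work: splitting $\sum_k b_k 2^k r^{2^k-1}$ at $2^k\asymp(1-r)^{-1}$ and invoking Lemma~\ref{caract. pesos doblantes}(ii) for the high blocks bounds $\nm{f}_{\B^\infty_{\nug}}$, while the low blocks reproduce $\int_0^{r}\frac{ds}{(1-s)\nug(s)}$ at $z=r_k$.

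The genuine gap is the necessity direction of (i), and it propagates to (ii), whose necessity argument uses $\nu\in\DD$ (obtained from (i)) to control the $\B^\infty_{\nug}$-norm of the lacunary witness. With $f_r(z)=c_r(1-sz)^{-m}$, $s=\tfrac{1+r}{2}$, the normalization $\nm{f_r}_{H^\infty_{\nug}}\asymp1$ forces $c_r\asymp\bigl(\sup_{0\le t<1}\nug(t)(1-st)^{-m}\bigr)^{-1}$, and the embedding inequality then yields only
$$
\frac{\nug(r)}{(1-r)^{m}}\lesssim \sup_{0\le t<1}\frac{\nug(t)}{(1-st)^{m}}.
$$
Since $1-st\ge\max(1-s,1-t)$, the right-hand side is at most $\max\bigl(\sup_{t\le s}\nug(t)(1-t)^{-m},\ \nug(s)(1-s)^{-m}\bigr)$, and the first alternative already contains the term $t=r$; hence the inequality is trivially satisfied and yields $\nu\in\DD$ only if one already knows $\sup_t \nug(t)(1-st)^{-m}\lesssim\nug(s)(1-s)^{-m}$, which is precisely the condition $\nug(t)/\nug(s)\lesssim\bigl((1-t)/(1-s)\bigr)^{m}$ for $t\le s$, i.e.\ Lemma~\ref{caract. pesos doblantes}(ii), i.e.\ $\nu\in\DD$. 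The argument is circular exactly for the weights it must rule out. The ``polynomial truncation'' fallback faces the same obstruction: for a single block $p$ localized at degree $\asymp(1-s)^{-1}$, when $\nu\notin\DD$ the supremum $\sup_t\nug(t)\abs{p(t)}$ may be attained at radii far from $s$ and exceed $\nug(s)\abs{p(s)}$ by an unbounded factor, so the normalized block again produces a vacuous inequality. What is needed is a construction that saturates $1/\nug$ along a whole sequence of radii simultaneously --- an infinite lacunary sum or product of such blocks in the spirit of your part (ii) witness, or Lusky's block-decomposition machinery --- rather than one kernel per radius.
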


However, we are able to prove that the aforementioned result on the boundedness of  $P_{\om}$ does not remain true for   $\nu\in\DD\setminus\Dd$.
\begin{corollary}\label{co: H-B-nu}
Let $\nu\in\DD\setminus\Dd$. Then, for any radial weight $\omega$, $P_\om$ is not bounded from $L^\infty_{\nug}$ to $H^\infty_{\nug}$. However, there exists a radial weight
$\omega_\nu$ such that $P_{\om_\nu}: L^\infty_{\nug}\to \B^\infty_{\nug}$ is bounded.
\end{corollary}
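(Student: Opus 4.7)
The plan is to prove the two assertions separately; the first is essentially immediate, while the second is the substantive content. Since $\nu \in \DD \setminus \Dd$, Corollary~\ref{co: proyeccion no acotada pesos} applied directly gives that no radial weight $\omega$ makes $P_\omega : L^\infty_{\nug} \to H^\infty_{\nug}$ bounded, which handles the first claim with no further work.

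For the second claim, I would take $\omega_\nu$ to be a suitably chosen standard weight $\omega_\nu(z) = (1-|z|)^\gamma$. Since any standard weight lies in $\DDD \subset \DD$, Theorem~\ref{th:caracterizacion B nu Dgorro} reduces the boundedness of $P_{\omega_\nu} : L^\infty_{\nug} \to \B^\infty_{\nug}$ to verifying the integral condition \eqref{eq: desig caracterizacion gorros}. The crucial point is that Theorem~\ref{th:caracterizacion B nu Dgorro} imposes no $\Dd$-hypothesis on $\nu$, so the assumption $\nu \notin \Dd$ does not obstruct this strategy; this is precisely what distinguishes the Bloch-space target from the $H^\infty$ target.

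To fix $\gamma$, apply Lemma~\ref{caract. pesos doblantes}(ii) to $\nu \in \DD$, producing constants $C, \alpha > 0$ such that $1/\nug(s) \le C((1-r)/(1-s))^\alpha / \nug(r)$ for $0 \le r \le s < 1$. Choosing any $\gamma > \alpha - 1$, the same computation already carried out in the proof of Corollary~\ref{co: proyeccion no acotada pesos} shows
$$\int_r^1 \frac{\omega_\nu(s)}{\nug(s)}\, ds \lesssim \frac{(1-r)^{\gamma+1}}{\nug(r)} \asymp \frac{\widehat{\omega_\nu}(r)}{\nug(r)},$$
which is precisely \eqref{eq: desig caracterizacion gorros}. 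Theorem~\ref{th:caracterizacion B nu Dgorro} then yields the boundedness of $P_{\omega_\nu} : L^\infty_{\nug} \to \B^\infty_{\nug}$. I do not anticipate a real technical obstacle here; the content of the corollary lies in the conceptual contrast between the $\Dd$-requirement on $\nu$ in Theorem~\ref{th:caracterizacion nu Dgorro} and its absence in Theorem~\ref{th:caracterizacion B nu Dgorro}, so weights outside $\Dd$ survive for the Bloch-space target even though they are ruled out for the $H^\infty$ target.
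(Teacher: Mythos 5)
Your proof is correct. The first assertion is handled exactly as you say: the ``in particular'' part of Corollary~\ref{co: proyeccion no acotada pesos} applies verbatim (the paper instead cites Corollary~\ref{cor: todo en D}, but the two routes are equivalent). For the second assertion the verification machinery is the same as the paper's --- exhibit a weight in $\DD$ satisfying \eqref{eq: desig caracterizacion gorros} and invoke the implication (iii)$\Rightarrow$(i) of Theorem~\ref{th:caracterizacion B nu Dgorro} --- but your witness differs. You take the standard weight $\omega_\nu(z)=(1-|z|)^\gamma$ with $\gamma>\alpha-1$, where $\alpha$ is the exponent from Lemma~\ref{caract. pesos doblantes}(ii) for $\nu$, and recycle the computation from the proof of Corollary~\ref{co: proyeccion no acotada pesos}; this is valid, since that estimate only uses $\nu\in\DD$. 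The paper instead takes the intrinsic choice $\omega_\nu=\nu\nug$, for which $\widehat{\omega_\nu}=\tfrac12\nug^{\,2}$ and $\int_r^1\omega_\nu(s)/\nug(s)\,ds=\nug(r)$, so that \eqref{eq: desig caracterizacion gorros} holds as an exact identity with no doubling exponent to track. Your construction is marginally more explicit (a concrete power weight) at the cost of introducing the auxiliary parameter $\gamma$ depending on $\nu$; the paper's is parameter-free and makes the cancellation transparent. Either is acceptable.
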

\begin{proof}
Since  $\nu\in\DD\setminus\Dd$, Corollary~\ref{cor: todo en D} implies that  $P_\om$ is not bounded from $L^\infty_{\nug}$ to $H^\infty_{\nug}$ for any radial weight $\omega$.
Now, consider $\omega_\nu=\nu\nug$, then 
$\omg(r)\asymp \left( \nug(r) \right)^2, \, 0\le r<1,$
and $\om\in\DD$.  So,
$\int_r^1 \frac{\om_\nu(s)}{\nug(s)}\,ds= \nug(r)\asymp  \frac{\widehat{\om_\nu}(r)}{\nug(r)}, \, 0\le r<1.$
Therefore, $P_{\om_\nu}: L^\infty_{\nug}\to \B^\infty_{\nug}$ is bounded, by Theorem~\ref{th:caracterizacion B nu Dgorro}. This finishes the proof.
\end{proof}

It is also worth mentioning that there exist $\om\in\DDD$ such that   $P_\om$ is not bounded from $L^\infty_{\nug}$ to $H^\infty_{\nug}$ but 
$P_{\om}: L^\infty_{\widehat{\nu_\om}}\to \B^\infty_{\widehat{\nu_\om}}$ is bounded. Take $\om=1$ and $\nu(r)=(1-r)^{-1}\left( \log \frac{e}{1-r}\right)^{-2}\in\DD\setminus\Dd$,
and  apply  Corollary~\ref{cor: todo en D} and Theorem~\ref{th:caracterizacion B nu Dgorro}

We are also able to prove the following result.

\begin{corollary}\label{co: H-B-om}
Let $\om\in\DD\setminus\Dd$. Then for any radial weight $\nu$, $P_\om$ is not bounded from $L^\infty_{\nug}$ to $H^\infty_{\nug}$. However there exists a radial weight
$\nu_\om$ such that $P_{\om}: L^\infty_{\widehat{\nu_\om}}\to \B^\infty_{\widehat{\nu_\om}}$ is bounded.
\end{corollary}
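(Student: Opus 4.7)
The statement has two parts, and each reduces cleanly to a previously established result once the right input is identified. For the negative assertion, my plan is to invoke Corollary~\ref{co: corolario caracterizacion omg dgorro}: since $\om\in\DD$, that corollary tells us a radial weight $\nu$ for which $P_\om:L^\infty_{\nug}\to H^\infty_{\nug}$ is bounded can exist \emph{only if} $\om\in\Dd$. The hypothesis $\om\in\DD\setminus\Dd$ therefore rules out such a $\nu$ for every radial weight $\nu$.

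For the positive assertion, the strategy is to exhibit a $\nu_\om$ satisfying \eqref{eq: desig caracterizacion gorros} and then invoke Theorem~\ref{th:caracterizacion B omg dgorro} (the hypothesis $\om\in\DD$ is already in hand). Mirroring the dual construction used in Corollary~\ref{co: H-B-nu} (where the choice $\omega_\nu=\nu\nug$ forced $\widehat{\omega_\nu}\asymp\nug^2$), the natural candidate here is to prescribe the tail directly as a fractional power of $\omg$. Concretely, I set
\[
\widehat{\nu_\om}(r) = \sqrt{\omg(r)},\qquad \text{i.e.,}\qquad \nu_\om(r) = \frac{\om(r)}{2\sqrt{\omg(r)}}.
\]
This is a genuine radial weight: $\nu_\om\ge 0$, $\int_0^1\nu_\om(s)\,ds = \sqrt{\omg(0)}<\infty$, and $\widehat{\nu_\om}(r)>0$ on $\D$ since $\omg>0$.

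The verification of \eqref{eq: desig caracterizacion gorros} is then an immediate computation. Since $\om(s)\,ds = -d\omg(s)$ and $\omg(1^-)=0$,
\[
\int_r^1\frac{\om(s)}{\widehat{\nu_\om}(s)}\,ds = \int_r^1\frac{-d\omg(s)}{\sqrt{\omg(s)}} = 2\sqrt{\omg(r)},
\]
so
\[
\frac{\widehat{\nu_\om}(r)}{\omg(r)}\int_r^1\frac{\om(s)}{\widehat{\nu_\om}(s)}\,ds = \frac{\sqrt{\omg(r)}}{\omg(r)}\cdot 2\sqrt{\omg(r)} = 2,\qquad 0\le r<1.
\]
Hence \eqref{eq: desig caracterizacion gorros} holds with constant $2$, and Theorem~\ref{th:caracterizacion B omg dgorro} delivers the boundedness of $P_\om:L^\infty_{\widehat{\nu_\om}}\to\B^\infty_{\widehat{\nu_\om}}$. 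There is no serious obstacle beyond guessing the correct tail profile: a power $\omg^\alpha$ with $\alpha\ge 1$ would make the inner integral diverge, while $\alpha\le 0$ would invalidate $\widehat{\nu_\om}$ as a weight tail; any $\alpha\in(0,1)$ works, and the symmetric choice $\alpha=1/2$ makes the two factors cancel perfectly.
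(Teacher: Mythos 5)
Your proof is correct and follows essentially the same route as the paper: the negative part is an immediate consequence of the earlier results (the paper cites Corollary~\ref{cor: todo en D}, you cite Corollary~\ref{co: corolario caracterizacion omg dgorro}; both rest on Theorem~\ref{th:caracterizacion omg dgorro}), and for the positive part the paper uses exactly the same weight, namely $\nu_\om$ with $\widehat{\nu_\om}=\omg^{1/2}$ (its formula $\nu_\om=\frac{d}{dr}\big(\omg^{1/2}\big)$ has an evident sign typo which you implicitly correct), verifying \eqref{eq: desig caracterizacion gorros} by the same computation before invoking the Bloch-type characterization. Your write-up is, if anything, marginally cleaner, since the identities $\widehat{\nu_\om}(r)=\sqrt{\omg(r)}$ and $\int_r^1\om(s)\widehat{\nu_\om}(s)^{-1}\,ds=2\sqrt{\omg(r)}$ are exact rather than up to $\asymp$.
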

\begin{proof}
Since  $\om\in\DD\setminus\Dd$, Corollary~\ref{cor: todo en D} yields that   $P_\om$ is not bounded from $L^\infty_{\nug}$ to $H^\infty_{\nug}$ for any radial weight $\nu$.
Now, for $\nu_\om(r)=\frac{d}{dr}\left( \left( \widehat{\om}(r) \right)^{\frac{1}{2}}\right)$ we have
$ \widehat{\nu_\om}(r)\asymp\left( \widehat{\om}(r)\right)^{\frac{1}{2}},  \, 0\le r<1,$
and $\nu\in\DD$.  Moreover, 
$\int_r^1 \frac{\om(s)}{\widehat{\nu_\om}(s)}\,ds\asymp \left( \widehat{\om}(r) \right)^{\frac{1}{2}}\asymp  \frac{\omg(r)}{\widehat{\nu_\om}(r)}, \, 0\le r<1.$
Therefore, $P_{\om}: L^\infty_{\widehat{\nu_\om}}\to \B^\infty_{\widehat{\nu_\om}}$ is bounded by Theorem~\ref{th:caracterizacion B nu Dgorro}. This finishes the proof.

\end{proof}

\section{Study of the boundedness of the Bergman projections  
for exponentially decreasing weights} \label{sec5}

\subsection{Class of exponentially decreasing weights} \label{sec5.1}
In this section we consider the boundedness of Bergman projections in the space $L_v^\infty$
for a class of exponentially decreasing weights $v$. In order to formulate the results we need 
some more definitions. As in \cite{HLS}, we say that a continuous function $\rho: \D \to
\mathbb{R}$ belongs to the class $\mathcal{L}_0$, if $\lim_{|z| \to 1} \rho(z) =0$,
\begin{equation}
\sup_{z,\zeta \in \D, z \not= \zeta} \frac{|\rho(z) -\rho(\zeta)|}{|z-\zeta|} < \infty
\label{7.1}
\end{equation}
and  for every $\varepsilon > 0$ there exists a compact $E \subset \D$ such that
$|\rho(z) - \rho(\zeta)| \leq \varepsilon |z - \zeta|$ whenever  $z, \zeta \in 
\D \setminus E$. Furthermore, according to \cite{HLS}, a twice continuously differentiable,
real valued subharmonic function $\varphi$ on $\D $ is said to belong to the class $\mathcal{W}_0$, if 
$ \Delta \varphi > 0$ and there exists $\rho \in \mathcal{L}_0$ such that
\begin{equation}
\frac{1}{\sqrt{\Delta \varphi}} \asymp \rho.  \label{7.1a}
\end{equation}
on $\D$.

We say that a weight $v $ on $\D$ belongs to the class $\mathcal{E}$, if it is of the form
$v = e^{-\varphi}$ for some {\it radial} function $\varphi \in \mathcal{W}_0$. 
As examples of weights in this class, we mention 
\begin{equation}
v(z) = \exp  \big(- \alpha / (1-|z|^\ell)^\beta \big)   \label{7.2}
%%\ \ \mbox{or}  \ \  v(z) = \exp\big(- \alpha / (1-|z|^2)^\beta \big),   
\end{equation}
where $\alpha, \beta, \ell  > 0$ are constants. For these weights one can take $\rho(z) =
(1- |z|)^{1+ \beta/2}$.

For the weight $v$ as in  \eqref{7.2} with $\ell, \beta =1$ and $\omega = v^2$, the 
boundedness of $P_\omega$ on $L_v^\infty$  was proved in \cite{ConstPel}. In the setting of 
exponentially decreasing weights satisfying the general condition $(B)$  of \cite{Lu2}, \cite{LuTa},
examples of bounded projections from $L_v^\infty$ onto $H_v^\infty$ were constructed in \cite{LuTa}. 
(These projections are not necessarily of the Bergman type. All weights \eqref{7.2} satisfy condition $(B)$, 
but there are many others, see the mentioned references.) We also mention the following  result  of \cite{HLS}.

\begin{lettertheorem}  \label{thE}
If $v = e^{-\varphi}$ with $\varphi \in \mathcal{W}_0$ and  $\omega = v^2$, then the projection $P_\omega$ is bounded from $L_v^\infty$ onto $H_v^\infty$. 
\end{lettertheorem}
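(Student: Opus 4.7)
The plan is to reduce the boundedness of $P_\omega: L_v^\infty \to H_v^\infty$ to a uniform integral estimate on the reproducing kernel of $A^2_\omega$, in the spirit of Proposition~\ref{prop: norma Hinf}, and then to verify that estimate using the pointwise off-diagonal decay of $B^\omega_z$ available for weights in the class $\mathcal{E}$. Although Proposition~\ref{prop: norma Hinf} is stated with the comparison weight $\nug$, its proof only uses the radiality and positivity of the comparison weight: testing $P_\omega$ against $f_a(\zeta) = B^\omega_a(\zeta)/\bigl(v(\zeta)|B^\omega_a(\zeta)|\bigr)$ on $\{B^\omega_a \neq 0\}$, extended by zero, yields that $P_\omega : L_v^\infty \to H_v^\infty$ is bounded if and only if
\begin{equation*}
\sup_{z\in \D}\, v(z) \int_\D |B^\omega_z(\zeta)|\, \frac{\omega(\zeta)}{v(\zeta)}\, dA(\zeta) < \infty.
\end{equation*}
Substituting $\omega = v^2$ simplifies this to $\sup_{z\in\D}\, v(z)\int_\D |B^\omega_z(\zeta)|\, v(\zeta)\, dA(\zeta) < \infty$.

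Next, I would invoke the pointwise off-diagonal kernel estimate for $\mathcal{W}_0$-weights, which is a central technical input used in \cite{HLS}: there are constants $C,\delta>0$ depending only on $\varphi$ such that
\begin{equation*}
|B^\omega_z(\zeta)|\, v(z)\, v(\zeta) \le \frac{C}{\rho(z)\rho(\zeta)}\, \exp\bigl(-\delta\, d_\rho(z,\zeta)\bigr), \quad z,\zeta \in \D,
\end{equation*}
where $d_\rho$ is the Riemannian distance on $\D$ induced by taking $\rho^{-2}$ as conformal factor of the Euclidean metric. Inserting this estimate, the required bound reduces to
\begin{equation*}
\sup_{z\in\D}\, \int_\D \frac{1}{\rho(z)\rho(\zeta)}\, \exp\bigl(-\delta\, d_\rho(z,\zeta)\bigr)\, dA(\zeta) < \infty,
\end{equation*}
which follows from a standard covering argument: $\D$ is tiled by $d_\rho$-balls of radius comparable to $\rho$, each of Euclidean area $\asymp \rho^2$ and on which $\rho$ is essentially constant thanks to the Lipschitz condition in the definition of $\mathcal{L}_0$; summing a geometric-type series in the $d_\rho$-distance from $z$ yields a bound independent of $z$.

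For surjectivity, once the boundedness is established, the reproducing identity $P_\omega g = g$ for $g \in H_v^\infty$ propagates from polynomials to all of $H_v^\infty$ by a density and dominated-convergence argument, justified by the pointwise bound $|g(\zeta)|\omega(\zeta) \lesssim v(\zeta) \in L^1(\D)$ together with the off-diagonal decay of $B^\omega_z$. The main obstacle is the pointwise kernel estimate itself; it is far from elementary and relies on H\"ormander-type $\overline{\partial}$ techniques for $A^2_\omega$ together with an analysis of Bergman kernel asymptotics in the $\rho$-adapted geometry, exactly the kind of input made available by the hypothesis $\varphi \in \mathcal{W}_0$ via \eqref{7.1a}. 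Once that ingredient is granted, the remaining arguments are the standard covering estimate above and the duality reformulation from the first paragraph.
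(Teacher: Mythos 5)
Your outline is essentially correct, but note that the paper does not prove Theorem~\ref{thE} at all: it is quoted from \cite{HLS}, and the paper's own contribution in Section~\ref{sec5} is the generalization Theorem~\ref{th5.2}, of which Theorem~\ref{thE} is the case $t=0$, $\sigma=0$. Measured against the proof of Theorem~\ref{th5.2}, your route is the same one: reduce to the Schur-type bound $\sup_{z}v(z)\int_\D|B^\om_z(\z)|\,v(\z)\,dA(\z)<\infty$ (only the easy direction of the Proposition~\ref{prop: norma Hinf}-type equivalence is needed here), insert the pointwise off-diagonal bound \eqref{5.3} from \cite[Theorem 3.2]{HLS}, and conclude from $\sup_z \rho(z)^{-1}\int_\D e^{-\alpha d_\rho(z,\z)}\rho(\z)^{-1}\,dA(\z)<\infty$, which the paper imports as \cite[Corollary 3.1]{HLS} and which your covering argument re-derives (the paper instead uses \eqref{5.4} plus Forelli--Rudin for the part of the integral where the extra factor $\nug$ matters, which is vacuous in the present case). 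Both you and the paper treat the kernel decay estimate as a black box; that is where all the substance lies, and you correctly flag it. The one soft spot is surjectivity: polynomials are not norm-dense in $H^\infty_v$, so ``density plus dominated convergence'' does not quite run as stated. The clean fix is to observe that $\om=v^2$ gives $|g(\z)|^2\om(\z)\le \|g\|_{\infty,v}^2$, hence $H^\infty_v\subset A^2_\om$ and $P_\om g=g$ for every $g\in H^\infty_v$ by the reproducing property, which makes $P_\om$ onto immediately.
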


We will prove in Theorem \ref{th5.2} a generalization of this resultfor the class $\mathcal{E}$ but it is first
worthwhile to recall known, related  negative results. Indeed, in \cite{Dos} it was shown that given
a weight $\omega$ as in \eqref{7.2} with $\ell=2$, the projection $P_\omega$ is bounded in $L_\omega^p$, $1 \leq p \leq \infty$,
if and only if  $p=2$. The result was extended in \cite{Zey} . Furthermore, in  \cite[Theorems 1 and 2]{BLT} 
it was shown that
given $v$ as in \eqref{7.2} and another weight $\omega = \exp\big( -\tilde \alpha /(1 - |z|^\ell )^{\tilde \beta} 
\big)$, then $P_\omega$ is unbounded in $L_v^\infty$, if
%%only if\footnote{The article contains also a proof for the ``if"-statement, but unfortunately there is 
%%a flaw at XXX. The ``if"-statement is however true by Theorem XXXX of ZZZZ} 
$\tilde \alpha \not= 2 \alpha$ or $\tilde \beta \not= \beta $.

\subsection{Study of the boundedness $P_\omega : L_v^\infty \to H_v^\infty$ for 
perturbed exponentially decreasing weights}
\label{sec5.2}
In view of Theorem \ref{thE} and the above mentioned negative results,  it is of interest to know if the projection $P_\omega$ is bounded in $L_v^\infty$ for some 
other,  ``smaller'' perturbations $v$ of the weight $\omega^{1/2}$. 
We will prove the following generalization, which is our main result concerning 
exponentially decreasing weights. 

\begin{theorem}  \label{th5.2}
Assume the weight $w= e^{- \varphi}$ belongs to $\mathcal{E}$ with $\varphi, \rho$ satisfying \eqref{7.1a}.
Moreover, let $\nu \in \DD$ and define the weights
$\omega = w^2$ and  $v = w \nug^{t} \rho^\sigma$, where $t\in\{-1,0,1\}$ and  $\sigma $ is any real number. Then, the projection 
$P_\omega$ is bounded from $L_v^\infty$  onto $H_v^\infty$.
\end{theorem}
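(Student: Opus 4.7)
The first step is the familiar reformulation in the spirit of Proposition~\ref{prop: norma Hinf}: testing $P_\omega$ against the normalised functions $f_a(\zeta) = B^\omega_a(\zeta)/\bigl(v(\zeta)|B^\omega_a(\zeta)|\bigr)$, the boundedness of $P_\omega: L^\infty_v \to H^\infty_v$ is equivalent to
$$
\sup_{z\in\D} v(z) \int_\D |B^\omega_z(\zeta)|\, \frac{\omega(\zeta)}{v(\zeta)}\, dA(\zeta) < \infty.
$$
Plugging in $\omega = w^2$ and $v = w\, \nug^t \rho^\sigma$, the task reduces to proving that
$$
I(z) := w(z)\,\nug(z)^t \rho(z)^\sigma \int_\D |B^\omega_z(\zeta)|\, w(\zeta)\, \nug(\zeta)^{-t} \rho(\zeta)^{-\sigma}\, dA(\zeta)
$$
is uniformly bounded in $z\in\D$; the base case $t=\sigma=0$ is exactly Theorem~\ref{thE}.

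To absorb the extra factors $\nug^{\pm 1}$ and $\rho^\sigma$, the plan is to combine a pointwise decay estimate for the Bergman kernel with the local regularity of $\rho$ and $\nug$. For $w = e^{-\varphi}\in \mathcal{E}$ the kernel satisfies an off-diagonal bound of the form
$$
|B^\omega_z(\zeta)|\, w(z) w(\zeta) \leq \frac{C}{\rho(z)\rho(\zeta)} \exp\!\left(-c\,\frac{|z-\zeta|}{\max(\rho(z),\rho(\zeta))}\right),
$$
which is classical in the theory of exponentially weighted Bergman spaces (see \cite{HLS} and its references). Decomposing $\D$ dyadically as $\Omega_k(z) = \{\zeta: 2^{k-1}\rho(z) < |z-\zeta| \leq 2^k \rho(z)\}$, the Lipschitz estimate \eqref{7.1} for $\rho\in\mathcal{L}_0$ yields $\rho(\zeta)\asymp \rho(z)$ when $|z-\zeta|\lesssim \rho(z)$, and controls $\rho(\zeta)$ linearly in $|z-\zeta|$ on the far annuli. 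Simultaneously, the upper doubling of $\nug$ (Lemma~\ref{caract. pesos doblantes}(ii)) bounds $\nug(\zeta)/\nug(z)$ by a fixed power of $(1-|\zeta|)/(1-|z|)$. On each $\Omega_k(z)$ the weight ratios $(\nug(\zeta)/\nug(z))^{\pm 1}$ and $(\rho(\zeta)/\rho(z))^{\pm\sigma}$ grow thus at most polynomially in $2^k$, while the exponential factor in the kernel bound supplies the decay $e^{-c 2^k}$; summing over $k$ gives $I(z)\lesssim 1$.

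The principal technical obstacle is the case $t=-1$, where $\nug(\zeta)$ appears inside the integral and has to be compared with $\nug(z)$. Since $\nug$ may in general decay arbitrarily fast, the doubling bound is only effective when $|z-\zeta|$ is genuinely smaller than $1-|z|$; this is precisely where the kernel is essentially supported, because for $w\in\mathcal{E}$ one has $\rho(z)\ll 1-|z|$ as $|z|\to 1^-$ (visible in the examples~\eqref{7.2}), so the tails of the integral outside a disc of radius $\asymp 1-|z|$ are negligible. Finally, surjectivity onto $H^\infty_v$ follows from a standard approximation argument: for $f\in H^\infty_v$ the dilates $f_r(z) = f(rz)$, $r<1$, lie in $A^2_\omega$, so $P_\omega f_r = f_r$, and the bound on $\int |B^\omega_z(\zeta)|\,\omega(\zeta)/v(\zeta)\,dA(\zeta)$ just established lets the dominated convergence theorem pass to the limit $r\to 1^-$ and conclude $P_\omega f = f$, exhibiting $f$ as its own preimage in $L^\infty_v$.
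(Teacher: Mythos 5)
Your overall strategy---reduce the claim to a uniform bound on $v(z)\int_\D |B^\om_z(\zeta)|\,\om(\zeta)v(\zeta)^{-1}\,dA(\zeta)$ and then exploit off-diagonal decay of the kernel together with the $\DD$-control of $\nug$ and the Lipschitz property of $\rho$---is the same as the paper's, but the quantitative core of your argument does not hold as written. First, the kernel estimate you invoke is not the one that is available: \cite[Theorem 3.2]{HLS} gives $|B^\om_z(\zeta)|\le C e^{\varphi(z)+\varphi(\zeta)}\rho(z)^{-1}\rho(\zeta)^{-1}e^{-\alpha d_\rho(z,\zeta)}$ with $d_\rho$ the $\rho$-distance, and since $\rho$ may grow linearly along the path from $z$ to $\zeta$, one only has $d_\rho(z,\zeta)\gtrsim \log\bigl(|z-\zeta|/\min(\rho(z),\rho(\zeta))\bigr)$; this yields polynomial decay of arbitrarily large order $M$ (this is exactly \eqref{5.4}), not the genuine exponential decay $e^{-c|z-\zeta|/\max(\rho(z),\rho(\zeta))}$ you assert. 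Worse, even taking your bound at face value, on a far dyadic annulus $\Omega_k(z)$ one can have $\rho(\zeta)\asymp|z-\zeta|\asymp 2^k\rho(z)$, so the exponent $-c|z-\zeta|/\max(\rho(z),\rho(\zeta))$ stays bounded below and the decay factor $e^{-c2^k}$ on which your summation over $k$ rests simply is not there.

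Second, the pointwise control on each annulus fails for the $\rho$-powers: every annulus $\Omega_k(z)$ with $2^k\rho(z)\gtrsim 1-|z|$ contains points $\zeta$ arbitrarily close to $\partial\D$, where $\rho(\zeta)\to 0$; hence when $\sigma+1>0$ the factor $\rho(\zeta)^{-(\sigma+1)}$ is unbounded on $\Omega_k(z)$ and the ratio $(\rho(z)/\rho(\zeta))^{\sigma}$ is not ``polynomial in $2^k$''. One must actually integrate and let the factor $\bigl(\min(\rho(z),\rho(\zeta))/|z-\zeta|\bigr)^M$, with $M$ chosen large compared with $\sigma$ and the $\DD$-exponent of $\nu$, absorb this singularity; this is what the paper does, splitting $\D$ at $|\zeta|=\tfrac12(1+|z|)$, cancelling the $\nug$'s on the inner part via monotonicity and $\nu\in\DD$ and quoting \cite[Corollary 3.1]{HLS} there, and finishing the outer part with $\rho(\zeta)\le C(1-|\zeta|)$ and the Forelli--Rudin estimates. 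Incidentally, your worry that ``$\nug$ may decay arbitrarily fast'' in the case $t=-1$ is unfounded: Lemma~\ref{caract. pesos doblantes}(ii) gives precisely the polynomial comparison $\nug(\zeta)/\nug(z)\le C\bigl((1-|\zeta|)/(1-|z|)\bigr)^{\beta}$ for $|\zeta|\le|z|$, while monotonicity handles $|\zeta|\ge|z|$, so no ``negligible tails'' argument is needed there. Your closing remark on surjectivity via dilations is fine, but it is not where the difficulty lies.
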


Here, the factor  $\nug^{t} \rho^\sigma$ can be considered as a modest multiplicative perturbation of the exponential weight $w$.
For example, for a weight $w$ as in \eqref{7.2}, the perturbation could be
any negative or positive power of the boundary distance. 

\begin{proof}
We present the proof for $t=1$,  similar proofs work for $t\in\{-1,0\}$.
We will need some results of \cite{HLS}. The Bergman reproducing kernel $B_z^\omega$ in 
\begin{equation}
P_\omega f(z) = \int_\D f(\zeta) \overline{B_z^\omega(\zeta)} \omega(\zeta ) dA (\zeta)
\end{equation}
has by \cite[Theorem 3.2]{HLS} the upper bound 
\begin{equation} 
|B_z^\omega(\zeta)| \leq C \frac{e^{\varphi(z) + \varphi(\zeta)}}{\rho(z) \rho(\zeta)} e^{-\alpha d_\rho(z,\zeta) }   \label{5.3}
\end{equation}
 for some $\alpha>0$, and  the last factor also has for any $M > 0$ the upper bound
\begin{equation} 
e^{-\alpha d_\rho(z,\zeta) } \leq C(M) \Big( \frac{\min( \rho(z), \rho(\zeta) )}{|z-\zeta|} \Big)^M  ,
\label{5.4}
\end{equation}
 see formula (23) of the reference. We will choose a large enough $M$ later. In \eqref{5.4}, $d_\rho$ is defined according to  \cite{HLS} as the  distance function 
\begin{equation} 
d_\rho(z,\zeta) = \inf_\gamma \int_0^1 |\gamma'(t)| \frac{dt }{\rho(\gamma(t))},
\end{equation}
where $z,\zeta\in \D$ and the infimum is taken over all piecewise $C^1$ curves 
$\gamma:[0,1] \to \D$ with $\gamma(0) = z$, $\gamma(1) = \zeta$.

Let $\nu \in \DD$ and $v = w \nug \rho^\sigma = e^{-\varphi} \nug \rho^\sigma$ be as in the assumptions
of the theorem, and let $f \in L_v^\infty$ be such that $\Vert f \Vert_{\infty,v} \leq 1$. We can estimate for every 
$z \in \D$, by using \eqref{5.3} and $|f(\zeta)| \leq \rho(\zeta)^{- \sigma} e^{\varphi(\zeta)}/ \nug(\zeta)$,
\begin{equation} \begin{split}
&  v(z) |P_\omega f(z)|  \leq w(z) \rho(z)^{ \sigma}  \nug(z) \int\limits_\D |B_z^\omega(\zeta) | |f(\zeta)| 
e^{-2 \varphi(\zeta) } dA(\zeta) 
\\ \leq & 
w(z) \nug(z) \rho(z)^\sigma e^{\varphi(z) }  \int\limits_\D 
\frac{e^{\varphi(\zeta)}}{\rho(z) \rho(\zeta)} e^{-\alpha d_\rho(z,\zeta) }  
\frac1{e^{ \varphi(\zeta)} \nug(\zeta) \rho(\zeta)^{\sigma} }  dA(\zeta) 
\\ \leq & 
\nug(z)\rho(z)^{\sigma -1}   \int\limits_\D 
\frac{  e^{-\alpha d_\rho(z,\zeta) }  }{\nug(\zeta) \rho(\zeta)^{\sigma +1}  } dA(\zeta)  .  \label{5.6}
\end{split} \end{equation}
We continue by dividing the integration domain $\D$ into two subsets,
\begin{equation} 
\mathbb{A}_z =  \Big\{ \zeta  \in \D \, : \, |\zeta|  \geq \frac12 (1 + |z|) \Big\}   \ \ \ 
\mbox{and} \ \ \ \mathbb{B}_z = \D \setminus \mathbb{A}_z   . \label{5.8}
\end{equation}
Then, there holds, for $\zeta \in \mathbb{B}_z$,
\begin{equation} 
\nug (\zeta) \geq \nug \Big(  \frac12 (1 + |z|) \Big) \geq C \nug (z) ,
\end{equation} 
since  $\nug$ is decreasing with respect to the radius and $\nu \in \DD$.
This yields  
\begin{equation} \begin{split}
&  \nug(z) \rho(z)^{\sigma -1 }  \int\limits_{\mathbb{B}_z} 
\frac{  e^{-\alpha d_\rho(z,\zeta) }  }{\nug(\zeta)\rho(\zeta)^{\sigma +1 } } dA(\zeta) 
\leq
C \rho(z)^{\sigma -1 }    \int\limits_{\mathbb{B}_z}
\frac{  e^{-\alpha d_\rho(z,\zeta) }  }{\rho(\zeta)^{\sigma+1} } dA(\zeta) 
\\ \leq & 
C \rho(z)^{\sigma -1 }    \int\limits_{\D}
\frac{  e^{-\alpha d_\rho(z,\zeta) }  }{\rho(\zeta)^{\sigma+1} } dA(\zeta) .  
\label{5.10}
\end{split} \end{equation}
By Corollary 3.1 of \cite{HLS}, this expression is bounded by a constant. 

To estimate the integral over $\mathbb{A}_z$ we first note that since $\nu \in \DD$,
Lemma \ref{caract. pesos doblantes}(ii) shows that there are constants $C,\beta > 0$ such that  
\begin{equation}
\frac{\nug(z)}{\nug(\zeta) } \leq C \frac{(1-|z|)^\beta}{(1-|\zeta|)^\beta }  
\nonumber %%\label{5.11}
\end{equation}
for all $z, \zeta$ with $|z| \leq |\zeta|$. 
%Since $\nug \in \DD$, the inequality 
%\eqref{5.11} extends, with the same $\beta$ and possibly a smaller $C>$, for all $z, \zeta$ with 
We use this and  \eqref{5.4} to write
\begin{equation} 
\begin{split}
& \nug(z)  \rho(z)^{\sigma-1}   \int\limits_{\mathbb{A}_z }  \frac{  e^{-\alpha d_\rho(z,\zeta) }  }{\nug(\zeta)\rho(\zeta)^{\sigma+1} } dA(\zeta) 
\leq 
C (1 - |z|)^\beta \rho(z)^{\sigma-1}   
\int\limits_{\mathbb{A}_z }  \frac{  e^{-\alpha d_\rho(z,\zeta) }  }{
\rho(\zeta)^{\sigma+1} (1- |\zeta|)^\beta }  dA(\zeta) 
\\ \leq & 
C(M) (1 - |z|)^\beta  \rho(z)^{\sigma-1}   \int\limits_{\mathbb{A}_z}
\frac{ 1  }{(1 - |\zeta|)^\beta\rho(\zeta)^{\sigma+1} }  
\bigg( \frac{ \min\big(  \rho(z), \rho(\zeta) \big) }{|z-\zeta|} \bigg)^M  dA(\zeta) 
\label{5.12}
\end{split} 
\end{equation}
We now choose the number $\gamma > 0$ such that $\sigma + \gamma -1 > 0$ and then  $M$ such that
$M > \sigma + \beta +  \gamma + 3 $. Then, we use
$$
\min\big(  \rho(z), \rho(\zeta) \big)^M \leq \rho(z)^\gamma \rho(\zeta)^{M-\gamma}
$$
to bound \eqref{5.12} by
\begin{equation} 
%\begin{split}
C_M (1 - |z|)^\beta  \rho(z)^{\sigma + \gamma -1}   \int\limits_{\mathbb{A}_z}
\frac{ \rho(\zeta)^{M - (\gamma + \sigma +1)} }{(1 - |\zeta|)^\beta |z-\zeta|^M}   dA(\zeta)  .
\label{5.14}
%\end{split}   
\end{equation}
Note that by \eqref{7.1}, the function $\rho$ has the bound $\rho(\zeta) \leq C(1- |\zeta|)$ for all $\zeta \in \D$. Thus, \eqref{5.14} is not larger than
\begin{equation} 
%\begin{split}
C_M (1 - |z|)^{\sigma + \beta +\gamma -1}   \int\limits_{\mathbb{A}_z}
\frac{ (1 - |\zeta|)^{M - (\sigma + \beta + \gamma +1)} }{|z-\zeta|^M}   
dA(\zeta)  . \label{5.16}
%\end{split}   
\end{equation}
Since $|z-\zeta| \geq C | 1 - z \bar \zeta|$ for $\zeta \in \mathbb{A}$, the Forelli-Rudin estimates 
\cite[Lemma 3.10]{Zhu} and the choice of the number $M$ show that 
\eqref{5.16} is bounded by a constant.

Combining \eqref{5.6} and \eqref{5.10}--\eqref{5.16} yield
$\sup_{z \in \D} v(z) |P_\omega f(z)|  \leq C$ for all $f\in L_v^\infty$ with 
$\Vert f \Vert_{\infty,v} \leq 1$, which proves the theorem. \ \ $\Box$
\end{proof}

Finally, we will apply Theorem~\ref{thE} in order to get the following result for the 
special exponential weights and set $w= e^{-\varphi}$, 
$\varphi(z) = \alpha (1-|z|^2)^{- \beta}$,
and 
\begin{equation}
v(z) = (1 - |z|^2)^\gamma w(z) , \ \  \ \ 
\omega(z)  = (1- |z|^2 )^{2 \sigma} w(z)^2   , \label{5.34}
\end{equation}
where the parameters satisfy $\alpha, \beta > 0 $, $\sigma, \gamma \in \mathbb{R}$.

\begin{proposition}  \label{prop22}
Let the weights $v$ and $\omega$ be as in \eqref{5.34}. Then,
the projection $P_\omega$ is bounded from $L_v^\infty$ onto $H_v^\infty$.
\end{proposition}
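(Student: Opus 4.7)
The plan is to reduce Proposition~\ref{prop22} to Theorem~\ref{th5.2} by absorbing the polynomial factor $(1-|z|^2)^{2\sigma}$ of $\omega$ into the exponential weight, and then recognising $v$ as a power of the auxiliary function $\rho$ times the new exponential weight.

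First I would set $\tilde\varphi(z):=\varphi(z)-\sigma\log(1-|z|^2)$ and $\tilde w:=e^{-\tilde\varphi}=(1-|z|^2)^\sigma w$, so that $\omega=\tilde w^{\,2}$ and
\[
v=(1-|z|^2)^\gamma w=(1-|z|^2)^{\gamma-\sigma}\,\tilde w .
\]
To feed this into Theorem~\ref{th5.2} with $\tilde w$ in place of $w$, I must check that $\tilde\varphi\in\mathcal{W}_0$. A direct computation gives
\[
\Delta\tilde\varphi(z)=\Delta\varphi(z)+\frac{4\sigma}{(1-|z|^2)^2}=\frac{4}{(1-|z|^2)^2}\Big[\alpha\beta(1-|z|^2)^{-\beta}(1+\beta|z|^2)+\sigma\Big],
\]
so $\Delta\tilde\varphi\asymp(1-|z|^2)^{-\beta-2}$ as $|z|\to 1^-$, and the same auxiliary function $\rho(z)\asymp(1-|z|^2)^{1+\beta/2}$ works for $\tilde\varphi$ as for $\varphi$. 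For very negative $\sigma$ the bracket can fail to be positive on a compact subset of $\D$; in that case I would modify $\tilde\varphi$ on that compact set so as to keep $\Delta\tilde\varphi>0$ throughout $\D$ without altering $\tilde\varphi$ in a neighbourhood of $\partial\D$. Such a modification changes $\tilde w$, $\omega$ and $v$ only by factors bounded above and below by positive constants on a compact set, which leaves $L_v^\infty$, $H_v^\infty$ and the boundedness of $P_\omega$ invariant up to equivalent norms. Thus we may assume $\tilde\varphi\in\mathcal{W}_0$ and $\tilde w\in\mathcal{E}$.

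Now I would take any $\nu\in\DD$ and put $t=0$, $\tau:=(\gamma-\sigma)/(1+\beta/2)$. Since $\rho\asymp(1-|z|^2)^{1+\beta/2}$, we have $\rho^{\tau}\asymp(1-|z|^2)^{\gamma-\sigma}$, and therefore
\[
v\asymp\tilde w\cdot\nug^{\,0}\cdot\rho^{\tau}.
\]
This is exactly the form of weight covered by Theorem~\ref{th5.2}, applied with $\tilde w$, $\nu$, $t=0$ and exponent $\tau$. The theorem then yields boundedness of $P_\omega:L_v^\infty\to H_v^\infty$. Surjectivity onto $H_v^\infty$ follows from the reproducing property: any $f\in H_v^\infty$ belongs to $A^2_\omega$ because the exponential decay of $v^{-1}$ dominates the polynomial factor $(1-|z|^2)^{2\sigma}$, and then $P_\omega f=f$.

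The main obstacle is the verification that $\tilde\varphi\in\mathcal{W}_0$ globally when $\sigma<-\alpha\beta$: the boundary asymptotics are immediate, but global strict subharmonicity on $\D$ requires the compact-set modification argument, which has to be argued truly harmless for the kernel estimates of \cite{HLS} underlying Theorem~\ref{th5.2} and for the weighted sup-norm spaces in question. The remaining bookkeeping — checking $H_v^\infty\subset A^2_\omega$ and matching the exponent $\tau$ so that $\rho^\tau$ reproduces the factor $(1-|z|^2)^{\gamma-\sigma}$ — is routine.
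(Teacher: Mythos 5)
Your reduction is essentially the paper's own proof: the paper sets $\psi(z)=\alpha(1-|z|^2)^{-\beta}-\sigma\log(1-|z|^2)$, $W=e^{-\psi}$, observes $\omega=W^2$ and $v=R^{(\gamma-\sigma)/(1+\beta/2)}W$ with $R(z)=(1-|z|^2)^{1+\beta/2}$, and invokes Theorem~\ref{th5.2} with $t=0$ --- exactly your $\tilde\varphi$, $\tilde w$ and $\tau$.

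Two caveats. First, you are right that $\Delta\tilde\varphi>0$ can fail on a compact set when $\sigma<-\alpha\beta$ (the paper passes over this in silence), but your repair is not as harmless as claimed: modifying $\tilde\varphi$ on a compact set changes the weight $\omega$ and therefore the \emph{operator} $P_\omega$ --- its reproducing kernel is a different function --- not merely the norms of the spaces involved. Boundedness of the projection associated with the modified weight does not automatically transfer to $P_\omega$; comparability of two radial weights gives comparability of their moments, but not pointwise comparability of $|B^\omega_z(\zeta)|$ with the modified kernel. This step needs a genuine argument (or one should check that the kernel estimates of \cite{HLS} used in Theorem~\ref{th5.2} survive under such perturbations). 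Second, your surjectivity argument is incorrect as written: for $f\in H_v^\infty$ the exponentials cancel exactly and $|f|^2\omega\lesssim(1-|z|^2)^{2(\sigma-\gamma)}$, so $H_v^\infty\subset A^2_\omega$ only when $\gamma<\sigma+\tfrac12$; there is no residual exponential decay to invoke. In general, establishing $P_\omega f=f$ for all $f\in H_v^\infty$ requires a separate approximation (e.g.\ dilation) argument.
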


\begin{proof}
We define
\begin{equation}
\psi(z) = \frac{\alpha}{(1-|z|^2)^\beta} - \sigma \log ( 1 - |z|^2)  \nonumber
\end{equation}
and observe that $\psi \in \mathcal{W}_0$, since there holds $1 / \sqrt{\Delta \psi(z)}
\asymp R(z)$ for the function $R(z) = (1-|z|^2)^{ 1 + \beta/2} \in \mathcal{L}_0$.
We define the weight $W= e^{-\psi} \in \mathcal{E}$. Then, obviously, $\omega= W^2$ and 
$
v(z) = R(z)^{\frac{\gamma-\sigma}{1 + \beta/2} } W(z) 
$ 
so that the result follows from Theorem \ref{th5.2} by choosing $W \to w $ and
$R \to \rho $ and setting $t=0$. 
\end{proof}

	\end{document}